\newtheorem{thm}{Theorem}[section]
\newtheorem{lem}[thm]{Lemma} 
\newtheorem{ex}[thm]{Example}
\newtheorem{conj}[thm]{Conjecture}
\numberwithin{equation}{section}
\def \N { {\mathbb N} }
\def \Q { {\mathbb Q} }
\def \R { {\mathbb R} }
\def \C { {\mathbb C} }
\def \Z { {\mathbb Z} }
\def \cP { { P} }
\def \F { {\mathcal F} }
\def \w { {\bf w} }
\def \V { {\mathbb V} }
\def \MS {{\mathcal P^*}}
\def \wordsnl { \Omega_{S,T}}
\def \wordsnlt { \Omega_{T}}
\def\ve#1{\mathchoice{\mbox{\boldmath$\displaystyle\bf#1$}}
{\mbox{\boldmath$\textstyle\bf#1$}}
{\mbox{\boldmath$\scriptstyle\bf#1$}}
{\mbox{\boldmath$\scriptscriptstyle\bf#1$}}}
 \DeclareMathOperator{\conv}{conv}
 \DeclareMathOperator{\vertices}{vert}
 \DeclareMathOperator{\cone}{cone}
 \newcommand\Side[1]{\begin{sideways}{\small #1}\end{sideways}}
\definecolor{darkgreen}{rgb}{0,0.6,0} 
\definecolor{darkyellow}{rgb}{0.8,0.6,0} 
\begin{document}

\title{Markov degree of the three-state toric homogeneous Markov chain model
}


\author{David Haws \and
Abraham Mart\'in del Campo \thanks{Research of Mart\'in del Campo
  supported in part by NSF grant 
  DMS-915211} \and
Akimichi Takemura \and
Ruriko Yoshida
}

\authorrunning{Haws \and Mart\'in del Campo \and Takemura \and Yoshida} 

\institute{D. Haws \at IBM, Watson in Yorktown Heights, New York USA\\
TEL:+1-914-945-2738\\
\email{dchaws@gmail.com} \and  
A. Mart\'in del
  Campo \at IST Austria, Am Campus 1, 
A - 3400, Klosterneuburg, 
Austria,\\  
     TEL:+43-(0)2243-9000\\
    \email{abraham.mc@ist.ac.at}  
    \and A.
  Takemura \at  University of Tokyo, Bunkyo, Tokyo 113-0033 Tokyo Japan\\
   TEL:+81-(0)3-5841-6940\\ \email{takemura@stat.t.u-tokyo.ac.jp} \and 
  R.   Yoshida \at University of Kentucky, 725 Rose Street Lexington KY
    40536-0082  USA\\ 
TEL:+1-859-257-5698  \\
\email{ruriko.yoshida@uky.edu}
}

\date{Received: date / Accepted: date}

\maketitle

\begin{abstract}
We consider the three-state toric homogeneous Markov chain model (THMC) without loops and initial parameters. At time $T$, the size of the design
matrix is $6 \times 3\cdot 2^{T-1}$ and the convex hull of its columns is the model polytope.
We study the behavior
of this polytope
for $T\geq 3$  and we show that
it is defined by $24$ facets for all $T\ge 5$. 
Moreover, we give a complete description of these facets.
From this, we deduce that the toric ideal associated with the design
matrix is generated by binomials of degree at most $6$. 
Our proof is based on a result due to
Sturmfels, who gave a 
bound on the degree of the generators of a 
toric ideal, provided the normality of the corresponding toric
variety. In our setting, we established the normality of the toric variety associated to the THMC model by studying the geometric properties of
the model polytope. 
\keywords{Toric ideals \and toric homogeneous Markov chains \and
  polyhedron \and
  semigroups}
\end{abstract}

\section{Introduction}
\label{intro}
A discrete time Markov chain, $X_t$ for $t = 1, 2, \ldots$,  is a stochastic
process with the Markov property, that is  $P(X_{t+1} =
y|X_1=x_1,\ldots,X_{t-1}=x_{t-1},X_t = x)   
= P(X_{t+1} = y | X_t = x)$ for any states $x, \, y$.  
Discrete time Markov chains have applications in several fields, such as  
physics, chemistry, information sciences, economics, finances,
mathematical biology, social sciences, and statistics \cite{stochastics}. 
In this paper, we consider a discrete time Markov chain $X_t$ over a set of states $[S] = \{ 1, \ldots, S\}$, with
$t=1,\ldots, T$ ($T\geq 3$), focusing on the case $S=3$.

Discrete time Markov chains are often used in statistical models to
fit the observed data from a random physical process. Sometimes, in
order to simplify the model, it is convenient to consider
time-homogeneous Markov chains, where the transition probabilities do
not depend on the time, in other words, when 
\[
P(X_{t+1} {=} y| X_{t}{ = }x) = P(X_2 {=} y| X_1 {=} x) 
\quad \forall \, x, \, y \in [S] \text{ and for any }t =1\ldots, T{-}1.
\]

Let $\w = s_1\cdots s_T$ denote a word of
length $T$ on states $[S]$. Let $p(\w)$ denote the likelihood of observing the word $\w$. In the time-homogeneous Markov chain model, this likelihood is written as the product of probabilities 
\begin{equation}\label{thmc}
P(\w) = \pi_{s_1}p_{s_1,s_2}\cdots p_{s_{T-1},s_T},
\end{equation}
where, $\pi_{s_i}$ indicates the initial distribution 
at the first state, and $p_{s_i, s_j}$ are the transition probabilities
from state $s_i$ to $s_j$. 
In the usual time-homogeneous Markov chain model it is assumed that the row sums of the
transition probabilities are equal to one: 
$\sum_{j=1}^S p_{i,j}=1$, $\forall i\in [S]$.
In addition, the toric homogeneous Markov chain (THMC) model is also described by \eqref{thmc}, but where the parameters $p_{i,j}$ are free and the row sums of the transition probabilities are not restricted.

In many cases the parameters $\pi_{s_1}$  for the initial distribution
are known, or sometimes these parameters are all constant, namely
$\pi_{1}=\pi_{2}=\cdots=\pi_{S}=c$; in this situation it is
no longer necessary to  
take them in consideration in expression~\eqref{thmc}, making the model simpler.
Another simplification that arises from
practice is when the only transition probabilities considered are those between
two different states, i.e.\ when $p_{i,j}=0$ whenever $i=j$; this
situation is referred  as the THMC model without self-loops. In this paper, we consider both simplifications of the THMC model.

In order for a statistical model to reflect the observed data, it has to pass
a goodness-of-fit test. 
For instance, for the time-homogeneous Markov chain model, it is
necessary to test if the assumption of
time-homogeneity fits the observed data.
In 1998, Diaconis-Sturmfels developed a Markov Chain Monte Carlo method (MCMC) for goodness-of-fit test by using \emph{Markov bases}~\cite{diaconis-sturmfels}.

 A Markov basis is a set of moves
 between objects with the same sufficient statistics
 so that the transition graph for the MCMC 
 is guaranteed to
 be connected for any observed value of the sufficient statistics (see Section \ref{MB} and \cite{stochastics}). 
In algebraic terms, a Markov basis is a generating set of a \emph{toric ideal} defined as the kernel of a monomial map between two polynomial rings. In algebraic statistics, the monomial map comes from the \emph{design matrix} associated with a statistical model.

In \cite{Hara:2010vn}, the authors provided a full description of the Markov bases
for the THMC model in two states (i.e.\ when $S=2$) which does not
depend on $T$, even though the toric ideal lies on 
a polynomial ring with 
$2^T$ indeterminates. Inspired by their work, we study the   
%
algebraic and polyhedral properties of the Markov bases of the
three-state THMC model without  
initial parameters and without self-loops. 
We showed that for arbitrarily large time $T \geq 5$, the {\em model 
  polytope} --the convex hull of the columns of the design 
matrix-- has only $24$ facets and we
provide a complete description of them.  Moreover, by showing the normality of
the polytope, we deduced that the Markov bases of the model consist of
binomials of degree at most $6$. 

The outline of this paper is as follows. In Section \ref{notation}, we
recall some definitions from Markov bases theory. In Section
\ref{facets}, we explicitly describe the hyperplane representation of
the model polytope for the 
three-state THMC model without self-loops for any time $T \geq 5$.  In Section
\ref{normal}, we show that the model polytope is normal for arbitrary $T \geq 3$, this is equivalent to show  that the semigroup generated by the
columns of the design matrix is integrally closed.  Finally, using these 
results, we prove the bound on the degree of the Markov
bases in Section \ref{discussion}; and we conclude that section with
some observations based on the analysis of our computational
experiments.

\section{Notation}\label{notation}
Let $\wordsnl$ be the set of all words of length $T$ on states $[S]$ such that
every word has no self-loops; that is, if $\ve w=(s_1,\ldots,s_T) \in \wordsnl$
then $s_i \neq s_{i+1}$ for $i = 1,\ldots,T{-}1$. We define $\MS(\wordsnl)$ to be
the set of all multisets of words in $\wordsnl$.

Let $\V$ be the real vector space with basis $\wordsnl$
and note that $\V \cong \R^{S(S-1)^{T-1}}$.  We recall some
definitions from the book of Pachter and Sturmfels \cite{Pachter:2005kx}.  Let
$A = (a_{ij})$ be a non-negative integer $d{\times} m$-matrix with the property
that all column sums are equal:
\begin{equation*}
\sum_{i=1}^d a_{i1} = \sum_{i=1}^d a_{i2} = \cdots = \sum_{i=1}^d a_{im}. 
\end{equation*}
Write $A = [\ve a_1 \; \ve a_2 \; \cdots \; \ve a_m]$ where $\ve a_j$ are the column vectors of
$A$ and define $\theta^{\ve a_j} = \prod_{i=1}^d \theta_i^{a_{ij}}$ for $j=
1,\ldots,m$. The \emph{toric model} of $A$ is the image of the orthant
$\R^d_{\geq 0}$ under the map 
\begin{equation*}
f: \R^d \rightarrow \R^m, \quad \theta \mapsto \frac{1}{\sum_{j=1}^m \theta^{\ve a_j}}\left( \theta^{\ve a_1}, \ldots, \theta^{\ve a_m} \right).
\end{equation*}
Here we have $d$ parameters $\theta = (\theta_1,\ldots,\theta_d)$ and a
discrete state space of size $m$. In our setting, the discrete space will be
the set of all possible words on $[S]$ of length $T$ without self-loops ($\wordsnl$) and we
can think of $\theta_1,\ldots, \theta_d$ as the probabilities
$p_{1,2}, p_{1,3}, \dots, p_{S-1,S}$. 

In this paper, we focus on the THMC model without initial parameters and with no self-loops in three states, (i.e., $S=3$), which is parametrized by
%
$6$ positive real variables:  $p_{1,2},\, p_{1,3},\, p_{2,1}, \, p_{2,3}$,  
$p_{3,1},\, p_{3,2}$.
In this case, we only write $\wordsnlt$ instead of $\Omega_{3,T}$. The number of parameters is $d=6$ and the size of the discrete space
is $m = 3\cdot 2^{T-1}$, which is precisely the number of words in $\wordsnlt$.
The model we study is thus the toric model represented by the $6 \times
3\cdot 2^{T-1}$ matrix $A^{T}$, which will be referred to as the \emph{design matrix} for the model on $3$ states
with time $T$.
 The rows of $A^T$ are indexed by elements in $\Omega_2$ and the columns are indexed by words
in $\wordsnlt$.  The entry of $A^{T}$ indexed by row $\sigma_1\sigma_2 \in
\Omega_2$, and column $\ve w=(s_1,\ldots,s_T) \in
\wordsnlt$ is equal to the cardinality of the set $\left\{\, i \in
\{1,\ldots,T{-}1\} \mid \sigma_1 \sigma_2 = s_i s_{i+1} \, \right\}$.
\begin{ex}
Ordering $\Omega_2$ and
$\wordsnlt$ lexicographically, and letting $T=4$, the matrix
$A^{4}$ is:
\begin{center}
\setlength{\tabcolsep}{4pt}
\begin{tabular}{c|cccccccccccccccccccccccc}
& \Side{\scriptsize 1212 } & \Side{\scriptsize 1213 } & \Side{\scriptsize 1231 } & \Side{\scriptsize 1232 } & \Side{\scriptsize 1312 } & \Side{\scriptsize 1313 } & \Side{\scriptsize 1321 } & \Side{\scriptsize 1323 } & \Side{\scriptsize 2121 } & \Side{\scriptsize 2123 } & \Side{\scriptsize 2131 } & \Side{\scriptsize 2132 } & \Side{\scriptsize 2312 } & \Side{\scriptsize 2313 } & \Side{\scriptsize 2321 } & \Side{\scriptsize 2323 } & \Side{\scriptsize 3121 } & \Side{\scriptsize 3123 } & \Side{\scriptsize 3131 } & \Side{\scriptsize 3132 } & \Side{\scriptsize 3212 } & \Side{\scriptsize 3213 } & \Side{\scriptsize 3231 } & \Side{\scriptsize 3232 } \\
\hline
12 & 2 & 1 & 1 & 1 & 1 & 0 & 0 & 0 & 1 & 1 & 0 & 0 & 1 & 0 & 0 & 0 & 1 & 1 & 0 & 0 & 1 & 0 & 0 & 0  \\
13 & 0 & 1 & 0 & 0 & 1 & 2 & 1 & 1 & 0 & 0 & 1 & 1 & 0 & 1 & 0 & 0 & 0 & 0 & 1 & 1 & 0 & 1 & 0 & 0  \\
21 & 1 & 1 & 0 & 0 & 0 & 0 & 1 & 0 & 2 & 1 & 1 & 1 & 0 & 0 & 1 & 0 & 1 & 0 & 0 & 0 & 1 & 1 & 0 & 0  \\
23 & 0 & 0 & 1 & 1 & 0 & 0 & 0 & 1 & 0 & 1 & 0 & 0 & 1 & 1 & 1 & 2 & 0 & 1 & 0 & 0 & 0 & 0 & 1 & 1  \\
31 & 0 & 0 & 1 & 0 & 1 & 1 & 0 & 0 & 0 & 0 & 1 & 0 & 1 & 1 & 0 & 0 & 1 & 1 & 2 & 1 & 0 & 0 & 1 & 0  \\
32 & 0 & 0 & 0 & 1 & 0 & 0 & 1 & 1 & 0 & 0 & 0 & 1 & 0 & 0 & 1 & 1 & 0 & 0 & 0 & 1 & 1 & 1 & 1 & 2 
\end{tabular}
\end{center} 
\end{ex}
\subsection{Sufficient statistics, ideals, and Markov basis}\label{MB}

Let $A^T$ be the design matrix for the THMC model without initial parameters and with
no self-loops. The column of $ A^{T}$ indexed by $\ve w \in \wordsnlt$ is denoted by $
\ve a_{\ve w}^T$. 
Thus, by
extending linearly, the map $ A^{T}: \V \rightarrow \R^{6}$ is
well-defined.

Let $W = \{ w_1,\ldots,w_N \} \in \MS(\wordsnlt)$ where we regard $W$ as
observed data which can be summarized in the \emph{data vector} $\ve u \in
\N^{3\cdot 2^{T-1}}$, where $\N=\{0,1,\dots\}$. We index $\ve u$ by words in $\wordsnlt$, so the coordinate representing for the word $\ve w$ in the
vector $\ve u$ is denoted by $u_{\ve w}$, and its value is the number of words in $W$ equal to $\ve w$. Note since $ A^{T}$ is linear
then $ A^{T}\ve u$ is well-defined. 
For $W$ from $\MS(\wordsnlt)$, let $\ve u$ be its data vector, the
\emph{sufficient statistics} for the model are stored in the vector $A^{T} \ve
u$.  Often the data vector $\ve u$ is also referred to as a \emph{contingency
table}, in which case $A^T \ve u$ is referred to as the \emph{marginals}. 

The design matrix $A^T$ above defines a toric ideal which is of central interest
in this paper, as their sets of generators are in bijection with the Markov
bases. The toric ideal $I_{A^T}$ is defined as the kernel of the homomorphism of
polynomial rings $\psi:\C[\{\,P(\w) \mid \w \in \wordsnl\,\}] \rightarrow
\C[\{\,p_{ij} \mid i,j \in [3],\, i \neq j\,\}]$ defined by
$\psi(P(\w))=p_{s_1,s_2}\cdots p_{s_{T-1},s_T}$, where  
$\{\,P(\w) \mid \w \in \wordsnl\,\}$ is regarded as a set of
indeterminates. 

Let $\ve b\in \N^{d}$ be a set of marginals. The set of contingency tables
with marginals $\ve b$
is called a \emph{fiber} which we denote
by $\F_{\ve b} = \{\,\ve x \in \N^{m} \mid A^T \ve x = \ve b \,\}$.
A \emph{move} $\ve z \in \Z^{m}$ is an integer vector satisfying $A^T \ve z =
0$.  A {\em Markov basis} for our model 
is 
a finite set $\mathcal Z$ of moves satisfying that for all $\ve b$
and all pairs $\ve x, \ve y \in \F_{\ve b}$ there exists a sequence $\ve
z_1,\ldots,\ve z_K \in \mathcal Z$ such that

\begin{equation*}
\ve y = \ve x + \sum_{k=1}^K \ve z_k, \quad \text{with }\ve x + \sum_{k=1}^l \ve
z_k \geq \ve 0, \; \text{ for all } l=1,\ldots,K.
\end{equation*}
A
{\em minimal Markov basis} is a Markov basis which is minimal in terms of
inclusion.  See Diaconis and Sturmfels~\cite{diaconis-sturmfels} for more
details on Markov bases and their toric ideals.  

\subsection{State Graph}

We give here a useful tool to visualize multisets of $\MS(\wordsnlt)$.
Given any multiset $W \in \MS(\wordsnlt)$ we consider the directed multigraph
called the \emph{state graph} $G(W)$. The vertices of $G(W)$ are given by the
three states $\{1,2,3\}$ and the directed edges $i \to j$ are given by the transitions from
state $i$ to $j$ in $\ve w \in W$. Thus, we regard $\ve w \in W$ as a path with 
$T{-}1$ edges (steps, transitions) in $G(W)$. 

We illustrate the state graph $G(W)$ of the
multiset $W = \{(12132),(12321)\}$ of paths with length $4$ in Figure \ref{stateexone}. Notice that the state graph in this figure is also the 
the state graph for the multiset $\overline{W} = \{(13212),(21232)\}$.


\begin{figure}[!htp]
\begin{center}
\scalebox{1}{
\includegraphics{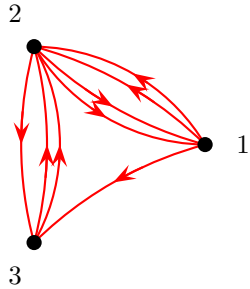}
}
\end{center}
\caption{The state graph of $W = \{(12132),(12321)\}$ and $\overline{W} = \{(13212),(21232)\}$.}
\label{stateexone}
\end{figure}

From the definition of state graph it is clear that it records the transitions
in a given multiset of words and we state the following proposition.
\begin{proposition}[Proposition 2.1 in \cite{Haws:2011fk}] 
\label{fiberstateequiv}
Let $A^T$ be the design matrix for the THMC, and $W,\overline W \in \MS(\wordsnl)$. Then $A^T (W)  = A^T (\overline W)$ if and only if $G(W) = G(\overline W)$. 
\end{proposition}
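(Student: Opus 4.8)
The plan is to observe that the proposition unwinds almost entirely from the two definitions involved, since both $A^T(W)$ and $G(W)$ encode exactly the same piece of combinatorial data: the number of occurrences of each possible transition. Accordingly, I would prove the two implications simultaneously by exhibiting this common encoding and noting that a multigraph on a fixed vertex set is recoverable from it.

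First I would record the elementary fact that underlies the whole statement. By the definition of the design matrix, the coordinate of $A^T \ve a_{\ve w}^T$ indexed by the row $\sigma_1\sigma_2 \in \Omega_2$ counts the positions $i \in \{1,\ldots,T{-}1\}$ at which the subword $s_i s_{i+1}$ of $\ve w = (s_1,\ldots,s_T)$ equals $\sigma_1\sigma_2$; that is, it is precisely the number of transitions $\sigma_1 \to \sigma_2$ traversed along the path that $\ve w$ traces in the state graph. Summing over the words of a multiset $W$ and invoking the linearity of $A^T$, the $\sigma_1\sigma_2$-coordinate of $A^T(W)$ equals the total multiplicity of the directed edge $\sigma_1 \to \sigma_2$ in the multigraph $G(W)$.

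Next I would note that a directed multigraph on the fixed loopless vertex set $[S] = \{1,2,3\}$ is completely determined by, and completely determines, its vector of edge multiplicities $(m_{\sigma_1\sigma_2})_{\sigma_1\sigma_2 \in \Omega_2}$, where $m_{\sigma_1\sigma_2}$ is the number of parallel edges from $\sigma_1$ to $\sigma_2$. Under the indexing of the rows of $A^T$ by $\Omega_2$, this edge-multiplicity vector is exactly $A^T(W)$ by the previous step. Hence $G(W) = G(\overline W)$, as an equality of multigraphs, holds if and only if their edge-multiplicity vectors coincide, which in turn holds if and only if $A^T(W) = A^T(\overline W)$. This settles both directions at once.

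I do not expect a genuine obstacle here, as the argument is a direct translation between the combinatorial and linear-algebraic encodings of one and the same data. The only point requiring care is to verify cleanly that the assignment $\sigma_1\sigma_2 \mapsto (\text{edge } \sigma_1 \to \sigma_2)$ is a bijection between the rows of $A^T$ and the admissible directed edges of $G(W)$, and that the state graph carries no information beyond its edge multiplicities --- in particular no vertex ordering or path-structure data that $A^T$ would fail to detect. Once this correspondence is made explicit, the equivalence is immediate.
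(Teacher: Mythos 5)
Your proof is correct and takes essentially the same route as the paper, which states the result without proof (citing its source) precisely because, as the paper puts it, ``from the definition of state graph it is clear that it records the transitions in a given multiset of words'' --- i.e., the identification of $A^T(W)$ with the edge-multiplicity vector of $G(W)$ that you make explicit. The only blemish is notational: you write $A^T \ve a_{\ve w}^T$ where you mean the column $\ve a_{\ve w}^T$ itself (equivalently $A^T$ applied to the indicator vector of $\ve w$), but this does not affect the argument.
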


Throughout this paper we alternate between terminology of the multiset of words $W$ and
the graph $G(W)$ it defines.

\section{Facets of the design polytope}\label{facets}

\subsection{Polytopes}
We recall some necessary definitions from polyhedral geometry and we refer the
reader to the book of Schrijver~\cite{Schrijver1986Theory-of-linea} for more
details. The \emph{convex hull} of $\{\ve a_1,\ldots,\ve a_m\} \subset \R^d$ is
defined as 
\begin{equation*}   
\conv (\ve a_1,\ldots,\ve a_m) := \left\{\, \ve x \in \R^d \mid \ve x = \sum_{i=1}^m \lambda_i \ve a_i, \, \, \sum_{i=1}^m \lambda_i = 1, \, \lambda_i \geq 0 \, \right\}.  
\end{equation*} 

A \emph{polytope} $\cP$ is the convex hull of finitely many points. We say $F
\subseteq \cP$ is a {\em face} of the polytope $\cP$ if there exists a vector
$\ve c$ such that $F = \arg \max_{\ve x \in \cP} \ve c \cdot \ve x$. Every face
$F$ of $\cP$ is also a polytope. If $\cP$ is of dimension $D$, a face $F$ of dimension $D{-}1$
is a called a \emph{facet}. For $k \in \N$, we define the
$k$-th dilation of $P$ as $k \cP := \left\{\, k \ve x \mid \ve x \in \cP 
\right\}$. A point $\ve x \in \cP$ is a \emph{vertex} if it can
not be written as a convex combination of points from $\cP \backslash \{\ve
x\}$.

The \emph{cone} of $\{\ve a_1,\ldots,\ve a_m\} \subset \R^d$ is defined as 
\begin{equation*}   
\cone (\ve a_1,\ldots,\ve a_m) := \left\{\, \ve x \in \R^d \mid \ve x = \sum_{i=1}^m \lambda_i \ve a_i, \, \lambda_i \geq 0 \, \right\}.  
\end{equation*} 
Thus, $\cone(A)$ denote the cone over the columns of a matrix $A$. 
We are interested in the case when the matrix in consideration is the 
design matrix $A^T$.
We define the \emph{design polytope}  $\cP^{T}$ as  the convex hull
$\conv(A^T)$ and we write $C^T$ to denote $\cone(A^T)$.

Given an integer matrix $A\in\Z^{d\times m}$ we associate an integer lattice
$\Z A = \{n_1 \ve a_1 +\cdots + n_m \ve a_m \mid n_i\in \Z\}$. We can also associate
the semigroup $\N A := \{n_1 \ve a_1 + \cdots + n_m \ve a_m \mid n_i \in \N\}$. We say
that the semigroup $\N A$ is \emph{normal} when $\ve x \in \N A$ if and only if
there exist $\ve y\in \Z^m$ and $\alpha\in \R_{\geq 0}^m$ such that $\ve x = A
\ve y$ and $\ve x = A \alpha$. 
The set $\Z A \cap \cone(A)$
is called the \emph{saturation} of $\N A$. See
\cite{MS2005,Sturmfels1996} for more details on normality. 

If $\ve x \in \R^{6}$, we index $\ve x$ by $\{\, ij \mid 1 \leq i , j
\leq 3,\, i \neq j\, \}$. We define $\ve e_{ij} \in \R^{6}$ to be the vector
of all zeros, except $1$ at index $ij$.  We also adopt the notation $x_{i+} :=
\sum_{j} x_{ij}$ and $x_{+i} := \sum_{j} x_{ji}$. For any $\ve x \in
\N^{6}$ we can define a directed multigraph $G(\ve x)$ on three vertices,
where there are $x_{ij}$ directed edges from vertex $i$ to vertex $j$. One
would like to identify the vectors $\ve x\in \N^{6}$ for which the graph
$G(\ve x)$ is a state graph. Nevertheless, observe that $x_{i+}$ is the
out-degree of vertex $i$ and $x_{+i}$ is the in-degree of vertex $i$ with
respect to $G(\ve x)$.

We now give some properties which will be used later for describing the facets of the
design polytope $\cP^{T}$ given by the design matrix for our model, and to prove normality of
the semigroup associated with the design matrix.

\begin{proposition}[Proposition 5.1 in \cite{Haws:2011fk}]\label{prop:bnddeg}
Let $A^T$ be the design matrix for the THMC without loops and initial parameters.
If $\ve x \in \Z A^T \cap C^T$ then $\sum_{i \neq j} x_{ij} = n(T{-}1)$ for
some $n \in \N$ and $|x_{i+} - x_{+i}| \leq n$ for all $i \in \{1,2,3\}$.
\end{proposition}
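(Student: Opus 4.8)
The plan is to read both identities directly off the combinatorial meaning of the columns of $A^T$. Each column $\ve a_{\ve w}^T$ records the transition counts of the word $\ve w=(s_1,\ldots,s_T)\in\wordsnlt$, viewed as a directed walk of $T{-}1$ steps on $\{1,2,3\}$, so that $(\ve a_{\ve w}^T)_{i+}$ and $(\ve a_{\ve w}^T)_{+i}$ are the out- and in-degree of vertex $i$ in that walk. Two elementary facts about a single column drive everything: first, the walk has exactly $T{-}1$ steps, so the entries of $\ve a_{\ve w}^T$ sum to $T{-}1$; and second, a standard degree count shows that the net flow $(\ve a_{\ve w}^T)_{i+}-(\ve a_{\ve w}^T)_{+i}$ equals $+1$ if $i=s_1\neq s_T$, equals $-1$ if $i=s_T\neq s_1$, and equals $0$ otherwise (interior vertices, and the case $s_1=s_T=i$, all balance), whence $|(\ve a_{\ve w}^T)_{i+}-(\ve a_{\ve w}^T)_{+i}|\le 1$ for every column and every $i$.

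For the first claim I would exploit both memberships. Writing $\ve x=A^T\ve y$ with $\ve y\in\Z^{m}$ and using the column-sum fact gives $\sum_{i\neq j}x_{ij}=(T{-}1)\sum_{\ve w}y_{\ve w}$, so $n:=\tfrac{1}{T-1}\sum_{i\neq j}x_{ij}$ is an integer. Writing instead $\ve x=\sum_{\ve w}\lambda_{\ve w}\,\ve a_{\ve w}^T$ with $\lambda_{\ve w}\ge 0$ (the cone membership) gives $\sum_{i\neq j}x_{ij}=(T{-}1)\sum_{\ve w}\lambda_{\ve w}\ge 0$; since $T\ge 3$ this forces $n\ge 0$, hence $n\in\N$.

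For the second claim I would keep the conic representation. The same column-sum identity yields $\sum_{\ve w}\lambda_{\ve w}=n$, and for each vertex $i$ one has $x_{i+}-x_{+i}=\sum_{\ve w}\lambda_{\ve w}\bigl((\ve a_{\ve w}^T)_{i+}-(\ve a_{\ve w}^T)_{+i}\bigr)$. The triangle inequality together with the per-column net-flow bound then gives $|x_{i+}-x_{+i}|\le\sum_{\ve w}\lambda_{\ve w}=n$, as required.

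The argument is short, and its only genuine content is the degree count establishing the net-flow trichotomy $\{-1,0,1\}$ for a single walk. The point that requires care is not to conflate the two representations of $\ve x$: integrality of $n$ comes only from the lattice membership $\ve x\in\Z A^T$, whereas the sign $n\ge 0$ and the bound $|x_{i+}-x_{+i}|\le n$ come only from the cone membership $\ve x\in C^T$. Both are therefore essential, which is precisely why the hypothesis places $\ve x$ in the saturation $\Z A^T\cap C^T$.
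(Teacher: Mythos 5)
Your proof is correct: the column-sum identity, the net-flow trichotomy $(\ve a_{\ve w}^T)_{i+}-(\ve a_{\ve w}^T)_{+i}=[i=s_1]-[i=s_T]\in\{-1,0,1\}$, and the careful separation of what the lattice membership gives (integrality of $n$) from what the cone membership gives (non-negativity of $n$ and the bound $|x_{i+}-x_{+i}|\le\sum_{\ve w}\lambda_{\ve w}=n$) are exactly what is needed. Note that the paper itself offers no proof of this statement --- it is imported verbatim as Proposition 5.1 of the cited reference \cite{Haws:2011fk} --- so your argument is in fact a self-contained replacement for an external citation, and it is the natural one.
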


An immediate consequence of this proposition is that $\cP^T \subset \R^6$ has dimension 5. Proposition \ref{prop:bnddeg} also states that for $\ve x \in \Z A^T \cap
C^T$ the multigraph $G(\ve x)$ will have in-degree and out-degree bounded
by $\|\ve x\|_1 / (T{-}1)$ at every vertex. This implies nice properties when $\|\ve x\|_1 = (T{-}1)$. 
Recall that a path in a directed multigraph is
\emph{Eulerian} if it visits every edge only once.

\begin{proposition}[Proposition 5.2 in \cite{Haws:2011fk}] \label{prop:eulpath}
If $G$ is a directed multigraph on three vertices, with no self-loops, $T{-}1$ edges, and satisfying
\begin{equation*}
|G_{i+} - G_{+i}| \leq 1 \qquad \, i=1,2,3;
\end{equation*}
then, there exists an Eulerian path in $G$.
\end{proposition}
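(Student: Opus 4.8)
The plan is to reduce the statement to the classical characterization of Eulerian trails in directed multigraphs: such a graph admits an Eulerian path precisely when, after discarding vertices of degree zero, its underlying undirected graph is connected, and the in/out-degree imbalances are compatible, namely at most one vertex has out-degree exceeding in-degree by one, at most one has in-degree exceeding out-degree by one, and every remaining vertex is balanced. Thus it suffices to verify two things for our $G$: that this degree condition holds, and that $G$ is connected on its non-isolated vertices.

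For the degree condition, write $d_i := G_{i+} - G_{+i}$ for $i = 1,2,3$. Since each edge contributes once to the total out-degree and once to the total in-degree, $\sum_i G_{i+} = \sum_i G_{+i} = T-1$, and hence $d_1 + d_2 + d_3 = 0$. Combined with the hypothesis $|d_i| \le 1$, the only integer solutions are $(d_1,d_2,d_3) = (0,0,0)$ and the permutations of $(1,-1,0)$. The key point, special to having exactly three vertices, is that two vertices cannot simultaneously carry positive imbalance: this would force the third to satisfy $d = -2$, violating $|d_i| \le 1$. Hence the imbalance pattern is always exactly one of the two admissible for an Eulerian trail, the all-zero case yielding an Eulerian circuit (a closed Eulerian path) and the $(1,-1,0)$ case an open one.

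For connectivity I use that $G$ has no self-loops. Any connected component consisting of a single vertex therefore contains no edge, so that vertex has degree zero. With only three vertices, a splitting into two edge-bearing components would require one of them to be such a singleton carrying an edge, which is impossible. Consequently the only way $G$ can fail to be connected is to have a single isolated vertex, with all $T-1 \ge 2$ edges lying on the remaining two vertices; that two-vertex subgraph is connected, and isolated vertices are irrelevant to the existence of an Eulerian trail. Restricting to the positive-degree vertices, $G$ is connected.

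Both conditions of the classical theorem are met, so $G$ admits an Eulerian path. The only genuine obstacle is ensuring the degree imbalances form an admissible pattern, and this is precisely where the restriction to three vertices is essential, since it automatically excludes the configuration $(1,1,-2)$ that would otherwise obstruct an Eulerian trail. As an alternative to invoking the classical theorem, one could argue constructively by a short case analysis on the edge multiplicities between the three vertices, but the characterization above keeps the argument cleanest.
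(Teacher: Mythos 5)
Your proposal is correct. Note that this paper does not actually prove Proposition~\ref{prop:eulpath}: it is imported verbatim as Proposition~5.2 of \cite{Haws:2011fk}, so there is no in-paper argument to compare yours against. Your route --- invoking the classical characterization of Eulerian trails in directed multigraphs (the degree-imbalance condition together with connectivity of the non-isolated vertices) and then checking both hypotheses --- is the natural one, and both checks are sound. The degree argument is complete: since $\sum_i (G_{i+}-G_{+i})=0$ and each term lies in $\{-1,0,1\}$, the imbalance vector is either $(0,0,0)$ or a permutation of $(1,-1,0)$, and you correctly flag that three vertices is exactly what rules out the obstructing pattern $(1,1,-2)$. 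The connectivity argument is also complete: with no self-loops, an edge-bearing component needs at least two vertices, so two such components would need at least four vertices; hence all edges lie in a single component and isolated vertices are harmless. One small point worth making explicit if you write this up: the classical theorem for digraphs requires only \emph{weak} connectivity on the non-isolated vertices (balanced degrees plus weak connectivity already force strong connectivity in the circuit case, and the open-trail case reduces to it by adding one edge), which is exactly the form you used, so no strong-connectivity verification is needed.
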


Note that every word $\ve w \in \MS(\wordsnlt)$ gives an Eulerian path
in $G(\{\ve w\})$
containing all edges.  Conversely, for every multigraph $G$ with an Eulerian
path containing all edges, there exists $\ve w \in \MS(\wordsnlt)$ such that
$G(\{\ve w$\})$ = G$. More specifically, $\ve w$ is the Eulerian path
in $G(\{\ve w\})$.
Throughout this paper we use the terms \emph{path} and \emph{word} interchangeably.

\begin{lem}[Lemma 5.2 in \cite{Haws:2011fk}] \label{lem:integinterior}
Let $A^T$ be the design matrix for the THMC.
If  $T \geq 4$, then $\cP^T \cap \Z^{6} = A^T$, where
the right hand side is taken as the set of columns of the matrix $A^T$.
\end{lem}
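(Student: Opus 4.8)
The plan is to prove the two inclusions $A^T \subseteq \cP^T \cap \Z^6$ and $\cP^T \cap \Z^6 \subseteq A^T$ separately. The first is immediate: every column $\ve a^T_{\ve w}$ is an integer vector lying in its own convex hull $\cP^T = \conv(A^T)$, hence belongs to $\cP^T \cap \Z^6$. All the content is in the reverse inclusion, where I must show that an arbitrary integer point of the polytope is in fact one of the columns.

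The key idea I would exploit is that the defining features of the columns are encoded by \emph{linear} (in)equalities, which therefore pass to the entire convex hull. Each column $\ve a^T_{\ve w}$ satisfies $\sum_{i \neq j} x_{ij} = T{-}1$, since a word of length $T$ has $T{-}1$ transitions, so $\cP^T$ lies on the hyperplane $\{\sum_{i \neq j} x_{ij} = T{-}1\}$. Moreover, reading $\ve w$ as a walk in its state graph, the quantity $x_{i+} - x_{+i}$ (out-degree minus in-degree) equals $+1$ at the initial vertex, $-1$ at the terminal vertex, and $0$ elsewhere (and is identically $0$ for a closed walk); hence every column satisfies $|x_{i+} - x_{+i}| \le 1$ for $i=1,2,3$. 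Because $x_{i+} - x_{+i}$ is a linear functional of $\ve x$ and these bounds hold at all the generators of $\cP^T$, they hold at every point of $\cP^T$. Finally, the columns are nonnegative, so $\cP^T \subseteq \R^6_{\ge 0}$. (One could alternatively try to read the degree bounds off Proposition~\ref{prop:bnddeg}, but that requires first checking membership in $\Z A^T$; the convexity route avoids this entirely.)

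I would then take $\ve x \in \cP^T \cap \Z^6$ and assemble these facts: $\ve x \in \N^6$, $\sum_{i \neq j} x_{ij} = T{-}1$, and $|x_{i+} - x_{+i}| \le 1$ for all $i$. Thus the directed multigraph $G(\ve x)$ has three vertices, no self-loops, exactly $T{-}1$ edges, and balanced degrees in the sense of Proposition~\ref{prop:eulpath}. That proposition furnishes an Eulerian path in $G(\ve x)$; since it traverses $T{-}1$ edges with no self-loop transitions, it is a word $\ve w = s_1 \cdots s_T \in \wordsnlt$, and by construction $G(\{\ve w\}) = G(\ve x)$, so by Proposition~\ref{fiberstateequiv} we have $A^T(\{\ve w\}) = \ve x$. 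Hence $\ve x = \ve a^T_{\ve w}$ is a column of $A^T$, completing the reverse inclusion.

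The argument is short once the right framing is in place, so I expect no serious obstacle. The only point requiring care is confirming that integrality and nonnegativity of $\ve x$ are genuinely available, so that $G(\ve x)$ is an honest multigraph to which Proposition~\ref{prop:eulpath} applies; both follow from the remarks above. I would flag that the conceptual crux is precisely the inheritance of the balance conditions $|x_{i+} - x_{+i}| \le 1$ by the whole polytope from its vertices, which holds only because these conditions are linear and which is exactly what lets Proposition~\ref{prop:eulpath} be applied to every lattice point at once. The hypothesis $T \ge 4$ does not appear essential to this reasoning, which functions as soon as $T{-}1 \ge 1$; its role, if any, is to sidestep degeneracies of the small polytope $\cP^3$.
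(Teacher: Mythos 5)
Your proof is correct. Note that the paper itself gives no proof of this lemma---it is imported verbatim as Lemma~5.2 of \cite{Haws:2011fk}---but your argument is precisely the one the quoted machinery is designed for: the constraints $\ve x\ge 0$, $\sum_{i\neq j}x_{ij}=T{-}1$, and $|x_{i+}-x_{+i}|\le 1$ are linear, hence inherited by all of $\cP^T$ from the columns, and Proposition~\ref{prop:eulpath} together with Proposition~\ref{fiberstateequiv} turns any integer point satisfying them into a column. Your closing observations are also sound: $T\ge 4$ is not needed for this argument (it goes through for $T=3$ as well), and the real crux is the three-state hypothesis, which forces the edge set of $G(\ve x)$ to be connected so that the Eulerian path exists---exactly the point that fails for $S\ge 4$, as in the paper's example $\tfrac12\ve a_{1212\cdots}+\tfrac12\ve a_{3434\cdots}$ in Section~\ref{discussion}.
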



We define
\begin{equation*}
H_{n(T{-}1)} := \left\{ \ve x \in \R^{6} \mid \sum_{i \neq j} x_{ij} = n(T{-}1) \right\}.
\end{equation*}

\begin{proposition}[Proposition 5.3 in \cite{Haws:2011fk}] \label{prop:conehype}
Let $A^T$ be the design matrix for the THMC without initial
parameters and no loops.
\begin{enumerate}
\item For $T \geq 4$ and $n \in \N$,
\begin{equation*}
n\cP^T = C^T \cap H_{n(T-1)}.
\end{equation*}
\item For $T \geq 4$,
\begin{equation*}
C^T \cap \Z A^T = \bigoplus_{n=0}^\infty \left( n\cP^T \cap \Z^6\right).
\end{equation*}
\end{enumerate}
\end{proposition}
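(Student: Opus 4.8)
The plan is to treat the two parts in order, deriving Part 2 from Part 1 together with the earlier structural results. For Part 1, the starting observation is that every column $\ve a_{\ve w}^T$ records the $T{-}1$ transitions of a word of length $T$, so $\sum_{i\neq j}(\ve a_{\ve w}^T)_{ij}=T{-}1$ and hence $\cP^T=\conv(A^T)\subset H_{T-1}$. Scaling gives $n\cP^T\subset H_{n(T-1)}$, and since convex combinations are in particular non-negative combinations, $n\cP^T\subseteq C^T\cap H_{n(T-1)}$. For the reverse inclusion I would take $\ve x\in C^T\cap H_{n(T-1)}$, write $\ve x=\sum_{\ve w}\lambda_{\ve w}\ve a_{\ve w}^T$ with $\lambda_{\ve w}\geq 0$, and sum all six coordinates: the left side equals $n(T{-}1)$ while the right equals $(T{-}1)\sum_{\ve w}\lambda_{\ve w}$, forcing $\sum_{\ve w}\lambda_{\ve w}=n$. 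Dividing by $n$ then exhibits $\ve x/n$ as a convex combination of columns, so $\ve x\in n\cP^T$; the degenerate case $n=0$ is handled directly, since $C^T$ lies in the non-negative orthant and $C^T\cap H_0=\{\ve 0\}=0\cP^T$. This argument is purely formal and needs no restriction on $T$.

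For Part 2, I would first record that the sets $n\cP^T$ lie in the pairwise disjoint hyperplanes $H_{n(T-1)}$, which is exactly what makes the union on the right a disjoint (graded) union. The inclusion $C^T\cap\Z A^T\subseteq\bigcup_n(n\cP^T\cap\Z^6)$ is then immediate: for $\ve x\in C^T\cap\Z A^T$, Proposition \ref{prop:bnddeg} gives $\sum_{i\neq j}x_{ij}=n(T{-}1)$ for some $n\in\N$, so $\ve x\in C^T\cap H_{n(T-1)}=n\cP^T$ by Part 1, and $\ve x\in\Z^6$ since $\Z A^T\subseteq\Z^6$. The content therefore lies in the reverse inclusion, which — using Part 1 again to see $n\cP^T\subseteq C^T$ — reduces to the single assertion
\[
n\cP^T\cap\Z^6\subseteq\Z A^T\qquad\text{for all }n\in\N.
\]

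I would prove this assertion by induction on $n$. The cases $n=0$ and $n=1$ are covered by Lemma \ref{lem:integinterior}, which identifies $\cP^T\cap\Z^6$ with the set of columns, each of which lies in $\Z A^T$. For the inductive step, take $\ve x\in(n{+}1)\cP^T\cap\Z^6$. A short computation shows that any point of $C^T\cap H_{(n+1)(T-1)}$ satisfies $|x_{i+}-x_{+i}|\leq n{+}1$: writing it as $\sum_{\ve w}\lambda_{\ve w}\ve a_{\ve w}^T$ with $\sum_{\ve w}\lambda_{\ve w}=n{+}1$, each column contributes an out-minus-in imbalance in $\{-1,0,1\}$ at every vertex. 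Thus $G(\ve x)$ is a loopless directed multigraph on $\{1,2,3\}$ with exactly $(n{+}1)(T{-}1)$ edges and vertex imbalances bounded by $n{+}1$. The goal is to peel off a single word: find $\ve w\in\wordsnlt$ whose edge-path sits inside $G(\ve x)$ and for which $G(\ve x-\ve a_{\ve w}^T)$ has all imbalances $\leq n$. Then $\ve x-\ve a_{\ve w}^T\in C^T\cap H_{n(T-1)}\cap\Z^6=n\cP^T\cap\Z^6\subseteq\Z A^T$ by the inductive hypothesis, and since $\ve a_{\ve w}^T\in\Z A^T$ we conclude $\ve x\in\Z A^T$.

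The main obstacle is this peeling step. Removing a walk from $u$ to $v$ lowers the out-minus-in imbalance by one at $u$ and raises it by one at $v$, leaving other vertices unchanged, so I must choose $\ve w$ to start at a vertex of imbalance $+(n{+}1)$ and end at a vertex of imbalance $-(n{+}1)$, and of length exactly $T{-}1$. Because there are only three vertices and the imbalances sum to zero, the offending vertices come in matching $\pm$ pairs, which is promising; the delicate points are guaranteeing that a walk of the exact required length $T{-}1$ between the prescribed endpoints exists inside $G(\ve x)$ and that its removal creates no new imbalance exceeding $n$. I expect to secure this by an Eulerian-path argument in the spirit of Proposition \ref{prop:eulpath}, exploiting that the edge count $(n{+}1)(T{-}1)$ is large relative to the three-vertex geometry, and treating separately the few configurations (e.g.\ when all imbalances are already $\leq n$, so that any extractable word suffices) in which the endpoints are not forced.
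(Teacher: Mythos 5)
Your Part 1 is correct (the dilation/intersection argument is the standard one and needs no restriction on $T$), and so are the forward inclusion of Part 2 and its reduction to the claim $n\cP^T\cap\Z^6\subseteq\Z A^T$; note that the paper itself imports this proposition from \cite{Haws:2011fk} without proof, so your argument must stand on its own. The genuine gap is in your inductive peeling step: you choose a word $\ve w$ inside $G(\ve x)$ so that the remainder $\ve y=\ve x-\ve a^T_{\ve w}$ is non-negative and has all vertex imbalances at most $n$, and from this alone you conclude $\ve y\in C^T\cap H_{n(T-1)}=n\cP^T$. That inference is false: non-negativity and imbalance bounds account for only part of the facet description of $C^T$ established in Section \ref{facets}; the facets such as $[1,1,-1,-1,1,1]$ (odd $T$) and the $T\bmod 3$ families are not implied by them. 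Concretely, take $T=7$, $n+1=3$, and (in the indexing $[x_{12},x_{13},x_{21},x_{23},x_{31},x_{32}]$)
\[
\ve x=[2,3,4,3,3,3]=\ve a^{7}_{2121321}+\ve a^{7}_{2313231}+\ve a^{7}_{1231321}\in 3\cP^{7}\cap\Z^{6},
\]
whose imbalances are $2,2,0$. Since all imbalances of $\ve x$ are already $\le n=2$, your own fallback rule (``any extractable word suffices'') permits peeling the word $\ve w=1313131$, which is edge-wise contained in $G(\ve x)$. The remainder $\ve y=[2,0,4,3,0,3]$ is non-negative, has coordinate sum $12=2\cdot 6$ and imbalances $2,2,0\le 2$, yet $[1,1,-1,-1,1,1]\cdot\ve y=-2<0$, so $\ve y\notin C^{7}$ and in particular $\ve y\notin 2\cP^{7}$: the induction collapses. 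To repair it you would have to choose $\ve w$ so that the remainder satisfies \emph{all} facet inequalities of $C^T$, and proving that such a word always exists is precisely the integer decomposition property of $\cP^T$, i.e.\ the normality statement of Theorem \ref{thm:normality} --- a result the paper can only obtain via computer verification for $3\le T\le 135$ plus a separate induction on $T$. This is a strong signal that no short Eulerian-path argument of the kind you sketch can close the step.

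The good news is that Part 2 needs nothing so strong, because it concerns the lattice $\Z A^T$, not the semigroup $\N A^T$. A direct route: two words agreeing except in their last step (e.g.\ $\cdots 12$ versus $\cdots 13$) show $\ve e_{12}-\ve e_{13}\in\Z A^T$, and words agreeing except in their first step give $\ve e_{21}-\ve e_{31}\in\Z A^T$; collecting the six differences obtained this way (last-step differences at each vertex, first-step differences at each vertex) one gets pairs of coordinates forming a connected graph on the index set $\{12,13,23,21,31,32\}$, so they generate the full rank-$5$ lattice $\{\ve z\in\Z^6\mid \sum_{i\neq j}z_{ij}=0\}$. Adding one column, whose coordinate sum is $T-1$, yields
\[
\Z A^T=\bigl\{\ve z\in\Z^6 \;\big|\; (T-1)\ \text{divides}\ \textstyle\sum_{i\neq j}z_{ij}\bigr\}.
\]
Then any $\ve x\in n\cP^T\cap\Z^6$ has coordinate sum $n(T-1)$ and hence lies in $\Z A^T$, while $\ve x\in C^T$ by your Part 1; this finishes the reverse inclusion with no induction on $n$, no peeling, and no Eulerian arguments.
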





\subsection{Facets of $\cP^T$ for $T\ge 5$}

In this section we describe the facets of the design polytope $\cP^{T}$
for arbitrary $T\geq 5$. For $T=3,4$ it can
be easily checked that $\cP^{T}$ has 12 facets using {\tt Polymake} \cite{polymake}.  For $T\ge 5$ the output of 
{\tt Polymake} suggests that $\cP^T$ always has 24 facets.
In the following we establish this fact by explicitly describing all these facets for all $T\geq 5$.

Recall that vectors $\ve c \in \R^6$ are
indexed as $[c_{12}, c_{13},c_{21},c_{23}, c_{31},c_{32}]$. 
The proofs for the facets of $\cP^{T}$ rely heavily on the state graph.
Note that we give the facets of the design polytope in terms of equivalence
classes under permutations of the labels $\{1,2,3\}$. For example, if $\mathfrak{S}_3$ denotes the set of permutations of the set $\{1,2,3\}$ and if the vector
$[c_{12},c_{13},c_{21}, c_{23}, c_{31}, c_{32}]$ defines a facet, then for any
$\sigma\in \mathfrak{S}_3$, the vector
$$
[c_{\sigma(1)\sigma(2)},c_{\sigma(1)\sigma(3)},c_{\sigma(2)\sigma(1)}, c_{\sigma(2)\sigma(3)}, c_{\sigma(3)\sigma(1)}, c_{\sigma(3)\sigma(2)}]
$$ 
also defines a facet.
Also note that, due to Proposition~\ref{prop:conehype}, the facets of the polytope $\cP^T$ and the cone $C^T$ are not only in bijection but they can be determined by the same linear inequalities of the form $\ve c \cdot \ve x\geq 0$. We call $\ve c$ the vector defining 
a facet of $\cP^T$ or $C^T$.

Recall that in general, to show that a vector $\ve c$ defines a facet of $C^T$, we need to show
the following two things:
\begin{itemize}
\item[i)] (Non-negativity) $\ve c \cdot \ve a^{T}_{\ve w}$ are non-negative for
  all $\ve w \in \wordsnlt$.
\item[ii)] (Dimensionality) the dimension of linear subspace spanned by $\{
  \ve a^T_{\ve w} \mid \ve c \cdot \ve a^T_{\ve w}=0\}$ is $5$.
\end{itemize}

\begin{proposition}
For any $T\ge 5$ 
\[
\ve c=[1,0,0,0,0,0]  
\]
defines a facet of $\cP^{T}$ modulo $\mathfrak{S}_3$.
\end{proposition}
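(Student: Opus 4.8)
The plan is to verify the two conditions (i) and (ii) recorded just above the statement for $\ve c=[1,0,0,0,0,0]$. First I observe that $\ve c\cdot\ve a^{T}_{\ve w}$ equals the number of times the transition $1\to 2$ occurs in $\ve w$, i.e. the edge multiplicity $x_{12}$ of the state graph $G(\{\ve w\})$. Since edge multiplicities are non-negative, condition (i) is immediate: the inequality $\ve c\cdot\ve x\ge 0$ is valid on $C^{T}$, and the face it supports is generated by exactly those columns $\ve a^{T}_{\ve w}$ whose words avoid the transition $1\to2$. Such words exist (for instance $13131\cdots$), so the face is non-empty.

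For condition (ii), note that every column on this face has $x_{12}=0$, so all of them lie in the $5$-dimensional coordinate subspace $\{x_{12}=0\}$ and their linear span has dimension at most $5$. It therefore suffices to exhibit, for every $T\ge 5$, five words avoiding $1\to2$ whose column vectors are linearly independent. Rather than manipulate words directly I would work with edge-count vectors and invoke Proposition~\ref{prop:eulpath}: any $\ve x\in\N^{6}$ with $x_{12}=0$, with $\sum_{i\ne j}x_{ij}=T-1$, connected support, and $|x_{i+}-x_{+i}|\le 1$ is realized as $G(\{\ve w\})$ for some $\ve w\in\wordsnlt$, so it is enough to produce five linearly independent such vectors.

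To build them uniformly in $T$, I start from five short \emph{base} graphs on the allowed edges $\{13,21,23,31,32\}$, each connected, balanced with $|x_{i+}-x_{+i}|\le1$, and visiting vertex $3$, all with the same number of edges. Concretely the words $13131$, $23232$, $13213$, $21323$, $32131$ have edge-vectors $(x_{13},x_{21},x_{23},x_{31},x_{32})$ equal to $(2,0,0,2,0)$, $(0,0,2,0,2)$, $(2,1,0,0,1)$, $(1,1,1,0,1)$, $(1,1,0,1,1)$, and a short computation (e.g. checking they are not annihilated by the same covector) shows these five are linearly independent. I then pad every base graph up to $T-1$ edges by splicing copies of the detour $3\to1\to3$ at an occurrence of vertex $3$; each copy adds the \emph{same} vector $\ve e_{13}+\ve e_{31}$, leaves every difference $x_{i+}-x_{+i}$ unchanged (so Proposition~\ref{prop:eulpath} still applies) and increases the length by $2$. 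Because all five base vectors are translated by the one common vector $k(\ve e_{13}+\ve e_{31})$, their affine independence is preserved; and since the padded vectors all lie on the hyperplane $\sum_{i\ne j}x_{ij}=T-1$, which misses the origin, affine independence is equivalent to linear independence. Hence the five padded columns are linearly independent, the span of the face is exactly $5$, and (ii) holds. The $\mathfrak{S}_3$ clause then yields the statement for the single representative $\ve c$.

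The main obstacle is the uniform-in-$T$ bookkeeping rather than the linear algebra. The $2$-cycle padding changes the length by $2$, so one family of base graphs reaches only one parity of $T-1$; I would therefore provide two families, one of base length $4$ (for $T$ odd, where $T-1\ge 4$ is even) and an entirely analogous one of base length $5$ (for $T$ even), in each case checking linear independence of the five base vectors, connectivity, the bound $|x_{i+}-x_{+i}|\le1$, and that each base graph meets vertex $3$ so that the splicing is legal. The smallest cases $T=5,6$ are confirmed directly. Everything else, in particular the evaluation of the relevant $5\times5$ determinant, is routine once the base graphs are fixed.
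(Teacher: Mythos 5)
Your proposal is correct, and at the top level it follows the same strategy as the paper's proof: non-negativity is immediate, and dimensionality is established by exhibiting five words that avoid the transition $1\to 2$ and have linearly independent sufficient statistics. What differs is the mechanism for making this uniform in $T$. The paper writes down five explicit $T$-dependent paths ($2313\cdots 13$, $1323\cdots 23$, $3232\cdots 32$, $3131\cdots 31$, with statistics listed separately for $T=2k$ and $T=2k{+}1$, plus a path $132132\cdots$ whose nonzero $x_{21}$-coordinate, a coordinate on which the other four vanish, lifts the span to dimension five) and checks independence by hand in each parity. You instead fix five base vectors once and for all, pad them by copies of the common vector $\ve e_{13}+\ve e_{31}$, realize the padded vectors as words through Proposition~\ref{prop:eulpath}, and transfer independence across the padding by the remark that affine and linear independence coincide on the hyperplane $\sum_{i\neq j}x_{ij}=T{-}1$, which misses the origin. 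That transfer step is in substance the paper's own Lemma~\ref{lem:1}, which the paper deploys only for the later facets; your route buys a single numeric $5\times 5$ determinant check (your five base vectors are indeed independent; the determinant is $-8$) in place of parity-by-parity computations with $k$-dependent entries. The one thing to flag is that your even-$T$ family is asserted rather than exhibited, and this check is not vacuous: the natural-looking analogue $131313$, $232323$, $132132$, $213232$, $321313$ is in fact dependent (writing $\ve v_1,\dots,\ve v_5$ for its statistics, $\ve v_1-\ve v_2-\ve v_3+3\ve v_4-2\ve v_5=0$), whereas $131313$, $232323$, $313131$, $323232$, $132131$ does work, and is easily seen to work because the last word is the only one with $x_{21}\neq 0$ --- precisely the paper's trick for its fifth path. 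So a complete write-up should display the even family explicitly, as the paper records both parities.
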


\begin{proof}
The non-negativity i) follows by definition, as the transition count between states should
be a non-negative integer. Now, for dimensionality ii), consider the following four paths
\begin{equation}\label{eq:4paths}
2313\cdots 13, \ 
1323\cdots 23, \ 
3232\cdots 32, \ 
3131\cdots 31.
\end{equation}
These paths have sufficient statistics that depend on $T$.
The vectors of sufficient statistics for $T=2k$ are
\begin{align*}
[0,k{-}1,0,1,k{-}1,0], \; 
[0,1,0,k{-}1,0,k{-}1], \;
[0,0,0,k{-}1,0,k], \;
[0, k{-}1, 0,0,k,0 ];
\end{align*}
and when $T=2k+1$ the sufficient statistics are
\begin{align*}
[0,k{-}1,0,1,k,0], \; 
[0,1,0,k{-}1,0,k], \;
[0,0,0,k,0,k], \;
[0, k, 0,0,k,0 ].
\end{align*}
Thus, the four paths in~\eqref{eq:4paths} correspond to four vectors that are linearly independent for $T\ge 5$. 
Lastly, consider the following path
$$
132132\cdots 132.
$$
Its vector of transition counts contains a nonzero value in the coordinate corresponding to the transition $21$, which shows that the space generated by the transition counts of these five paths is 5-dimensional. We conclude 
by noticing that none of these paths contain the transition $12$, thus they satisfy $\ve c \cdot \ve a^T_{\ve w}=0$.
\end{proof}


\begin{proposition}
\label{prop:1}
For any $T\ge 5$ 
\[
\ve c=[T,\, T,\, -(T{-}2),\, 1,\, -(T{-}2),\, 1]  
\]
defines a facet of $\cP^{T}$ modulo $\mathfrak{S}_3$.
\end{proposition}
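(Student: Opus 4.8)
The plan is to use the non-negativity/dimensionality criterion (i)--(ii), the key being an explicit rewriting of the linear form $\ve c\cdot\ve a^T_{\ve w}$ in terms of the in- and out-degree of vertex $1$ in the state graph of $\ve w$. Writing $x_{ij}$ for the transition counts of a word $\ve w$, I would group the terms of $\ve c\cdot\ve a^T_{\ve w}=T x_{12}+T x_{13}-(T{-}2)x_{21}+x_{23}-(T{-}2)x_{31}+x_{32}$ according to whether vertex $1$ is a source or a target, obtaining $T x_{1+}-(T{-}2)x_{+1}+(x_{23}+x_{32})$. Since every column of $A^T$ has coordinate sum $T-1$, we have $x_{23}+x_{32}=(T-1)-x_{1+}-x_{+1}$, and substitution collapses everything to the identity $\ve c\cdot\ve a^T_{\ve w}=(T-1)(x_{1+}-x_{+1}+1)$. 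As $\ve w$ is a single path, the imbalance $x_{1+}-x_{+1}$ at vertex $1$ lies in $\{-1,0,1\}$, so this quantity is non-negative, settling (i); and it vanishes exactly when $x_{1+}-x_{+1}=-1$, i.e.\ precisely for the paths that end at vertex $1$ and start elsewhere.

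For (ii) I would produce five such paths whose sufficient-statistics vectors are linearly independent in $\R^6$. Because all of these vectors lie in the $4$-dimensional affine slice cut out by $\sum_{i\neq j} x_{ij}=T-1$ and $x_{1+}-x_{+1}=-1$, five linearly independent vectors automatically span a $5$-dimensional linear space, which is what (ii) requires. Mirroring the preceding proposition, the natural choices are near-alternating paths whose statistics depend on the parity of $T$, so I would treat $T=2k$ and $T=2k+1$ separately. In each case four of the paths (a $2$--$1$ alternation, a $3$--$1$ alternation, and two built from a long walk on the $2$--$3$ edge capped off at vertex $1$) already yield four independent vectors; independence is immediate by reading off the coordinates $12$, $13$, $21$, $31$, in each of which only one of the four is supported.

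The main obstacle is the fifth path. In either parity class the span of the first four paths is $4$-dimensional, and its orthogonal complement is spanned by $\ve c$ together with one further vector $w$ that is not a multiple of $\ve c$; the fifth path must therefore be chosen with $w\cdot\ve a^T_{\ve w}\neq 0$. A short computation identifies $w$ with a net $2$--$3$ flow functional, so the requirement becomes a condition on $x_{23}-x_{32}$. Here a parity obstruction appears: for instance a path from $2$ to $1$ with $x_{23}=x_{32}$ is forced to have an odd number of edges, so a qualifying fifth path exists with a genuinely different shape in the two cases (a padded copy of $231231$ serves when $T$ is even, and a path such as $21321$ padded by $21$-blocks when $T$ is odd). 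Verifying $w\cdot\ve a^T_{\ve w}\neq 0$ for this last vector---equivalently, that it escapes the span of the other four---is the only delicate step; the remaining checks are routine coordinate computations, after which (i) and (ii) give the claim, and the conclusion extends to the full $\mathfrak{S}_3$-orbit by relabeling.
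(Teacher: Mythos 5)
Your part (i) is correct, and it is in fact a tidier packaging of the paper's own argument: the identity $\ve c\cdot\ve a^T_{\ve w}=(T{-}1)(x_{1+}-x_{+1}+1)$ does check out (substituting $x_{23}+x_{32}=(T{-}1)-x_{1+}-x_{+1}$), and together with the fact that the imbalance of a single path at vertex $1$ lies in $\{-1,0,1\}$ (Proposition~\ref{prop:bnddeg}) it gives non-negativity and the equality characterization --- paths ending at $1$ and starting elsewhere --- in one stroke; the paper reaches the same characterization by a short case analysis. Your part (ii) also follows the paper's recipe (exhibit five equality columns with linearly independent statistics), though with different paths: the paper's choice depends on $T \bmod 6$, yours only on parity, and your proposed paths can indeed be made to work.

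The genuine gap is in how you certify the fifth path when $T$ is odd. Your identification of the second orthogonal-complement vector $w$ as the net $2$--$3$ flow $x_{23}-x_{32}$ is correct only for even $T$, where the four base paths can be the pure alternations $2121\cdots21$, $3131\cdots31$ and the two capped $2$--$3$ walks. For odd $T$ a pure alternation ending at $1$ must also start at $1$, so it has imbalance $0$ and is not an equality column; the alternations must be capped (e.g.\ $32121\cdots21$ and $23131\cdots31$), and the orthogonal complement of the span of the resulting four vectors is spanned by $\ve c$ together with $(x_{21}-x_{12})+(x_{13}-x_{31})+(x_{23}-x_{32})$, not by $x_{23}-x_{32}$. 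Your own parity observation exposes the inconsistency: a path from $\{2,3\}$ to $1$ with $x_{23}=x_{32}$ has an odd number of edges, so for odd $T$ (where the number of edges $T{-}1$ is even) \emph{every} admissible path satisfies $x_{23}\neq x_{32}$ --- including your four base paths, which certainly do not escape their own span; hence ``$x_{23}\neq x_{32}$'' cannot be the escape criterion in that case. The repair is mechanical: compute the correct $w$ for odd $T$ (or simply row-reduce the $5\times 6$ matrix of the five statistics vectors); one can check that your padded path $2121\cdots21321$ evaluates to $2$ under the corrected functional, so your construction survives --- but as written, the step you yourself flag as the only delicate one is carried out against the wrong linear form. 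A smaller slip in the same part: coordinates $21$ and $31$ are each supported by \emph{two} of your four base paths (an alternation and a capped walk), so independence is not ``immediate'' coordinatewise but follows by triangular elimination (use $12$ and $13$ first, then $21$ and $31$).
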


\begin{proof}
We check the non-negativity i).  Consider any particular word (path) $\ve w \in \wordsnlt$ of length $T$ with transition counts
$x_{12},x_{13},x_{21},x_{23},x_{31},x_{32}$.  We need to show
\begin{equation}
\label{eq:1}
T(x_{12}+x_{13}) + x_{23}+x_{32} \ge (T{-}2) (x_{21}+x_{31}).
\end{equation}
Note that $x_{12}+x_{13}$ is the out-degree of vertex 1 and $x_{21}+x_{31}$ is
the in-degree of vertex 1 with respect to the graph $G(\ve w)$. By Proposition
\ref{prop:bnddeg} the out-degree and the in-degree can differ by at most $1$.
Note that \eqref{eq:1} trivially holds when $x_{12}+x_{13} \ge x_{21}+x_{31}$.
Hence we only need to
check the case $a=x_{12}+x_{13} = x_{21}+x_{31}-1$.
Now 
\[
T{-}1=x_{12}+x_{13}+x_{21}+x_{23}+x_{31}+x_{32}=2a+1+x_{23}+x_{32}
\]
or
\[
2a+2 + x_{23}+x_{32} - T=0.
\]
Hence the difference of two sides of \eqref{eq:1} is written as
\begin{equation}
\label{eq:2}
Ta + x_{23}+x_{32} - (T{-}2)(a+1)=x_{23}+x_{32} +2(a{+}1)-T
=0.
\end{equation}
This proves the non-negativity.

Next we consider dimensionality ii). Equation \eqref{eq:1} can not hold with equality in the case
$x_{12} + x_{13} > x_{21} + x_{31}$. Also \eqref{eq:1} can not hold with equality
in the case  $x_{12} + x_{13} = x_{21} + x_{31}>0$.  Furthermore, if $0=x_{12}+x_{13}=
x_{21}+x_{31}$, then the path entirely consists of edges between 2 and 3.
Then $T{-}1=x_{32}+x_{23} > 0$ and \eqref{eq:1} does not hold with equality.
Hence the only remaining case is $x_{12} + x_{13} = x_{21} + x_{31} -1$.
But then, from \eqref{eq:2} we see that \eqref{eq:1} holds with equality.
Therefore, all paths $\ve w$ such that $x_{12} + x_{13} = x_{21} + x_{31} -1$ satisfies
$\ve c\cdot \ve a_{\ve w}^T=0$.
We now give five such paths with linearly independent sufficient
statistics, which depends on $T \bmod 3$ and $T \bmod 2$.

If $T$ even, consider
\begin{equation*}
3131\cdots 131,\; 2121\cdots 121,\; 3232\cdots 3231 
\end{equation*}

If $T$ odd, consider
\begin{equation*}
23131\cdots 31,\; 32121\cdots 21,\; 2323\cdots 231
\end{equation*}

If $T\equiv 0 \  \pmod{3}$, consider
\begin{equation*}
321321\cdots 321,\; 231231\cdots231
\end{equation*}

If $T\equiv 2 \pmod{3}$, consider
\begin{equation*}
213213\cdots 2131,\; 312312\cdots3121
\end{equation*}

Finally if $T\equiv 1 \pmod{3}$, put  the loop $232$ or $323$  in front of 
the word above for the value of $T\equiv 2  \pmod{3}$.

We need to show that the sufficient statistics of these paths are linearly independent.  For example, 
consider the case $T=6k$.  Then the sufficient statistics are given  by the vectors
\begin{align*}
[0,3k{-}1,0,0,3k,0],\; [3k{-}1,0,3k,0,0,0],\; [0,0,0,3k{-}1,1,3k{-}1],\\ 
[0, 2k{-}1, 2k, 0, 0, 2k],\; [0,2k,0,2k{-}1,2k,0].
\end{align*}
For $k\ge 1$, the linear independence of these five vectors can be easily verified.
Other cases $T \equiv r \pmod{6}$ can be similarly handled. 
\end{proof}


In a similar fashion, we prove the following propositions.

\begin{proposition}
\label{prop:2}
For any $T=2k+1$ odd, $T\ge 5$,  
\[
\ve c=[1,\, 1,\, -1,\, -1,\, 1,\, 1]
\]
defines a facet of $\cP^{T}$ modulo $\mathfrak{S}_3$.
\end{proposition}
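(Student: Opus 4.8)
The plan is to verify the two facet conditions i) (non-negativity) and ii) (dimensionality) for the vector $\ve c=[1,1,-1,-1,1,1]$ in the odd case $T=2k+1$. For a path $\ve w$ with transition counts $x_{12},x_{13},x_{21},x_{23},x_{31},x_{32}$, the linear form is $\ve c\cdot \ve a^T_{\ve w}=x_{12}+x_{13}-x_{21}-x_{23}+x_{31}+x_{32}$. First I would rewrite this in terms of degrees: since $x_{12}+x_{13}$ is the out-degree of vertex $1$ and $x_{21}+x_{31}$ is its in-degree, and similarly $x_{23}$ and $x_{32}$ are edge counts, I expect to reorganize the inequality $x_{12}+x_{13}+x_{31}+x_{32}\ge x_{21}+x_{23}$ into a statement about in- and out-degrees of vertex $2$. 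Indeed $x_{21}+x_{23}$ is the out-degree of vertex $2$, and the left side counts all other transitions. Using that the total number of edges is $T-1=2k$ is even, together with Proposition~\ref{prop:bnddeg} which bounds $|x_{2+}-x_{+2}|\le 1$, I would argue non-negativity by a parity and degree-balance argument analogous to the proof of Proposition~\ref{prop:1}: the form equals $(T-1)-2x_{2+}+(\text{correction})$, and the constraint forces non-negativity, with equality characterizing a specific degree configuration at vertex $2$.

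For the equality analysis driving dimensionality, I would determine exactly which paths satisfy $\ve c\cdot \ve a^T_{\ve w}=0$. Following the pattern of Proposition~\ref{prop:1}, I expect equality to hold precisely when the out-degree of vertex $2$ exceeds its in-degree by the maximal amount permitted, i.e.\ $x_{2+}=x_{+2}+1$ (or the symmetric boundary case), combined with the parity constraint coming from $T-1=2k$ being even. I would isolate this boundary case carefully, ruling out all other sign combinations of the degree difference exactly as was done in Proposition~\ref{prop:1}, where the strict inequality was shown in every non-boundary case.

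Once the equality locus is pinned down, for ii) I would exhibit five explicit words in $\wordsnlt$ lying on the hyperplane $\ve c\cdot \ve a^T_{\ve w}=0$ whose sufficient-statistics vectors are linearly independent in the $5$-dimensional ambient hyperplane $H_{T-1}$. These explicit paths will depend on the residue of $T$ modulo small integers (here the oddness $T=2k+1$ is already built in, but a further split modulo $3$ may be needed, mirroring the case analysis $T\equiv 0,1,2\pmod 3$ in Proposition~\ref{prop:1}). I would write down their transition-count vectors in terms of $k$ and check that the resulting $5\times 6$ matrix has rank $5$ for all $k\ge 2$.

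The main obstacle I anticipate is the dimensionality step, specifically producing a uniform family of five independent paths that works for all large odd $T$. The non-negativity and equality characterization should follow smoothly from the same degree-balance and parity reasoning already deployed for Proposition~\ref{prop:1}, since the vector $\ve c$ here is a structurally similar ``degree-difference at a single vertex'' functional. The delicate part is the combinatorial bookkeeping: ensuring each proposed word is a genuine loop-free path of the correct length with the claimed statistics, handling the residue-class case split, and confirming the rank computation is valid for the full range $k\ge 2$ rather than only for one illustrative value of $k$.
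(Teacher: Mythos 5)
Your non-negativity argument is correct and takes a genuinely different route from the paper's. Since the six transition counts of a path sum to $T-1=2k$, your identity is exact (the ``correction'' term is zero): $\ve c \cdot \ve a^T_{\ve w} = (T-1) - 2x_{2+}$ with $x_{2+}=x_{21}+x_{23}$. Combining $x_{2+}\le x_{+2}+1$ (Proposition~\ref{prop:bnddeg} with $n=1$) with $x_{2+}+x_{+2}\le T-1$ (these two edge sets are disjoint because there are no self-loops) gives $2x_{2+}\le 2k+1$, hence $x_{2+}\le k$ by integrality, which is exactly non-negativity; it also makes transparent where oddness of $T$ enters. The paper argues differently: it merges states $1$ and $3$ into a single virtual state and uses the fact that an alternating two-state path with an even number of edges returns to its start, so that $x_{12}+x_{32}=x_{21}+x_{23}$ whenever $x_{13}+x_{31}=0$. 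Your version is, if anything, cleaner.

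The genuine gap is in your equality analysis and, consequently, in the dimensionality step, which you defer entirely. By your own identity, $\ve c\cdot \ve a^T_{\ve w}=0$ if and only if $x_{2+}=k$; this is \emph{not} the condition ``out-degree of state $2$ exceeds its in-degree by the maximal amount.'' The equality paths split into two classes: balanced ones with $x_{2+}=x_{+2}=k$, forcing $x_{13}=x_{31}=0$ (e.g.\ $1212\cdots 1$ and $2323\cdots 2$), and unbalanced ones with $x_{2+}=k$, $x_{+2}=k-1$, forcing $x_{13}+x_{31}=1$; conversely, $x_{2+}=x_{+2}+1$ does not imply equality (e.g.\ the path $21313\cdots 13$). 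This error is fatal to the plan as written: every statistics vector in the unbalanced class has the form $[a,\,\epsilon,\,b,\,k-b,\,1-\epsilon,\,k-1-a]$ with $\epsilon\in\{0,1\}$, so all such vectors lie in a fixed $4$-dimensional linear subspace, and five linearly independent vectors can never be chosen from that class alone; the balanced paths are indispensable. A working family is $(12)^k1$, $(23)^k2$, $(21)^k3$, $(23)^k1$, $3(21)^k$, with statistics $[k,0,k,0,0,0]$, $[0,0,0,k,0,k]$, $[k{-}1,1,k,0,0,0]$, $[0,0,0,k,1,k{-}1]$, $[k{-}1,0,k,0,0,1]$; all satisfy $x_{2+}=k$ and are linearly independent for every $k\ge 2$, with no further case split modulo $3$ needed (unlike Proposition~\ref{prop:1}). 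Exhibiting and checking such a family is the substantive half of a facet proof, and it is exactly where care is required: as it happens, the paper's own list stumbles at this point, since its fourth and fifth paths, $1323\cdots 232$ and $3121\cdots 212$, have $\ve c$-value $2$ rather than $0$ and must be replaced by variants such as the reversed paths given above.
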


\begin{proof}
For non-negativity, consider
\begin{equation}
\label{eq:3}
x_{12}+x_{13} + x_{31} + x_{32} \ge x_{21}+x_{23}.
\end{equation}
We can ``merge'' two vertices 1 and 3 as a virtual vertex 4 and consider
4 as a single vertex. Then the resulting graph has only two vertices (2 and 4).
Then $x_{12}+x_{32}$ is the out-degree of this vertex 4.
If $x_{12}+x_{32} \ge x_{21}+x_{23}$ then the inequality is trivial.
Consider the case $x_{12}+x_{32} = x_{21}+x_{23}-1$.  We need to show that
in this case we have $x_{13} + x_{31} \ge 1$.  
By contradiction assume that $x_{13}+x_{31}=0$. Then the path of odd length is like
\[
2424242.
\]
However in this case $x_{12}+x_{32} = x_{21}+x_{23}$, which is a contradiction.
Therefore we have proved that
\eqref{eq:3} holds for any path $\ve w$.

Now we check the dimensionality ii).  In order to check the dimensionality 
we verify for which path \eqref{eq:3} holds with equality.
The first case is $0=x_{13}+x_{31}$.  Then as we saw above we have
$x_{12}+x_{32} = x_{21}+x_{23}$.  Other case is $1=x_{13}+x_{31}$, i.e.\ 
either $x_{13}=1, x_{31}=0$ or $x_{31}=1, x_{13}=0$. In the former case
the path is like
\[
2121323
\]
and in the latter case the path is like $2323121$.
We claim that for $T=2k+1 \geq 5$ the paths
\begin{equation*}
121\cdots 121,\; 232\cdots 232,\; 212 \cdots 2123,\; 1323 \cdots 232,\; 3121\cdots 212
\end{equation*}
give sufficient statistics that are linearly independent and hold with equality for Equation \eqref{eq:3}.
The sufficient statistics for the paths above are
\begin{align*}
[k,0,k,0,0,0],\; [0,0,0,k,0,k],\; [k{-}1,1,k,0,0,0],\\
 [0, 1, 0, k{-}1, 0, k],\; [k,0,k{-}1,0,1,0].
\end{align*}
For $k\ge 2$, one can check the linear independence of these 5 vectors.
\end{proof}

We now consider any three consecutive transitions, or a path for $T=4$.  
Let $\tilde x_{ij}$, $1\le i\neq j \le 3$, be transition counts of
these three transitions.  
\begin{lem}
\label{lem:2} 
Let $i,j,t$ be distinct (i.e.\  $\{i,j,t\}=\{1,2,3\}$).
Then
\[
\tilde x_{ij} + \tilde x_{jt} + \tilde x_{it} \ge 1.
\]

\end{lem}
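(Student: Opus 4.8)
We have three consecutive transitions — equivalently, a word of length $T=4$, say $s_1 s_2 s_3 s_4$ with $s_\ell \neq s_{\ell+1}$ — and $\tilde x_{ij}$ counts how many of the three transitions $s_1s_2$, $s_2s_3$, $s_3s_4$ equal $ij$. We must show that for any labeling $\{i,j,t\}=\{1,2,3\}$, we have $\tilde x_{ij}+\tilde x_{jt}+\tilde x_{it}\ge 1$; that is, at least one of the three "forward" transitions $i\to j$, $j\to t$, $i\to t$ occurs among the three steps.

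**Approach: a finite case analysis organized by the state graph.** The plan is to exploit that this is a statement about a single word of length $4$ over three symbols with no self-loops, so the whole object is a very small combinatorial gadget. I would phrase it contrapositively: suppose $\tilde x_{ij}=\tilde x_{jt}=\tilde x_{it}=0$, meaning none of the three directed edges $i\to j$, $j\to t$, $i\to t$ appears. The remaining allowed directed edges (since self-loops are forbidden) are exactly the three "reverse" edges $j\to i$, $t\to j$, $t\to i$. So every one of the three transitions of the path must be one of these three edges, and the task reduces to showing that no no-self-loop walk of length $3$ on $\{i,j,t\}$ can be built entirely from the edges $\{j\to i,\ t\to j,\ t\to i\}$.

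**Key step: these three reverse edges form an acyclic (transitively oriented) tournament.** The decisive observation is that the edge set $\{j\to i,\ t\to j,\ t\to i\}$ is the strict total order $t \succ j \succ i$: every directed edge goes from a larger element to a smaller one in this order. Consequently any walk using only these edges is strictly decreasing, so it can have length at most $2$ (it visits at most the three vertices $t,j,i$ in that sequence, using at most two edges). A walk with three edges would require four vertices in a strictly decreasing chain inside a $3$-element order, which is impossible. This contradicts the existence of a length-$3$ path, completing the argument. I would present the three symmetric forward edges as the complementary orientation and simply note that forbidding all three of them forces the acyclic orientation, whence the length bound.

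**Main obstacle and how I would handle it.** There is no deep obstacle here; the only thing to be careful about is making the reduction airtight rather than hand-wavy. The subtlety is that the lemma is stated "modulo" the choice of distinct $i,j,t$, so I want to make sure the argument is genuinely symmetric in the three labels and does not secretly assume $i<j<t$ as integers — the order $t\succ j\succ i$ is an \emph{abstract} order induced by whichever three forward edges we are excluding, not the numerical order on $\{1,2,3\}$. I would state this explicitly. As an alternative bookkeeping route, in case a referee prefers something fully mechanical, one could instead just enumerate: a length-$3$ no-self-loop word over three symbols has a bounded number of shapes up to relabeling, and one checks the inequality on each shape; but the acyclicity argument is cleaner and makes transparent \emph{why} the bound is exactly $1$ and not larger. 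I expect the write-up to be short, with the contrapositive framing doing essentially all the work.
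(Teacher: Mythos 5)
Your proposal is correct and follows essentially the same route as the paper: both argue contrapositively that if $\tilde x_{ij}+\tilde x_{jt}+\tilde x_{it}=0$ then only the edges $j\to i$, $t\to j$, $t\to i$ remain, and no path of three transitions can be formed from them. Your acyclicity observation (the remaining edges orient a strict total order $t\succ j\succ i$, so walks have length at most $2$) simply makes explicit what the paper dismisses with ``by drawing a state graph, it is obvious.''
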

\begin{proof}
Suppose that $\tilde x_{ij} + \tilde x_{jt} + \tilde x_{it}=0$.  Then
in the three transitions, we can not use the directed edges
$ij$, $jt$, $it$.  Then the available edges are $ji,ti,tj$.  By drawing a state graph,
it is obvious that by the edges $ji, ti,tj$ only, we can not form a path of three
transitions.
\end{proof}

\begin{proposition}
\label{prop:3}
For any $T\ge 7$ of the form $T=3k{+}1$ ($k\ge 2$), 
\[
\ve c=[2,\,-1,\,-1,\,-1,\, 2,\, 2]
\]
defines a facet of $\cP^{T}$ modulo $\mathfrak{S}_3$.
\end{proposition}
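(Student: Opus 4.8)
The plan is to follow the same two-part strategy used in Propositions~\ref{prop:1} and~\ref{prop:2}: first establish non-negativity of $\ve c \cdot \ve a^T_{\ve w}$ over all words $\ve w \in \wordsnlt$, and then exhibit five words achieving equality whose sufficient-statistics vectors span a $5$-dimensional space. Writing out the inequality for $\ve c = [2,-1,-1,-1,2,2]$, non-negativity amounts to showing
\begin{equation*}
2x_{12} + 2x_{31} + 2x_{32} \ge x_{13} + x_{21} + x_{23}
\end{equation*}
for every word of length $T = 3k+1$ with no self-loops. The natural tool here is Lemma~\ref{lem:2}: I would partition the $T-1 = 3k$ transitions of the path into $k$ consecutive blocks of three transitions each. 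For the labelling $(i,j,t) = (2,1,3)$, Lemma~\ref{lem:2} says each block contributes at least $1$ to $\tilde x_{21} + \tilde x_{13} + \tilde x_{23}$; but I actually want a lower bound on $x_{12} + x_{31} + x_{32}$, so I would instead apply the lemma with the complementary orientation $(i,j,t) = (1,2,3)$ (or whichever permutation makes the "positive" edges $\{12,31,32\}$ play the role of $\{ij,jt,it\}$), concluding that each block of three transitions contains at least one edge from $\{12,31,32\}$. This yields $x_{12}+x_{31}+x_{32} \ge k$ across the whole path.

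The core of the argument is then converting this block bound into the weighted inequality. I expect this to be the main obstacle, since the coefficient $2$ on the positive edges and the requirement $T = 3k+1$ must interact precisely. The idea is to combine $x_{12}+x_{31}+x_{32} \ge k$ with the total-length identity $x_{12}+x_{13}+x_{21}+x_{23}+x_{31}+x_{32} = 3k$. Subtracting, $x_{13}+x_{21}+x_{23} \le 2k$, and one wants $x_{13}+x_{21}+x_{23} \le 2(x_{12}+x_{31}+x_{32})$; this follows immediately if $x_{12}+x_{31}+x_{32} \ge k$ together with $x_{13}+x_{21}+x_{23} = 3k - (x_{12}+x_{31}+x_{32})$, because then the desired inequality $3(x_{12}+x_{31}+x_{32}) \ge 3k$ is exactly the block bound. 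I would verify this reduction carefully, as the constraint $T \equiv 1 \pmod 3$ is precisely what makes $3k$ the edge count and forces the clean factor-of-three cancellation; for other residues the partition into triples leaves a remainder and the bound degrades, which explains the congruence hypothesis.

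For dimensionality, I would first characterize the equality case: $\ve c \cdot \ve a^T_{\ve w} = 0$ forces every one of the $k$ consecutive triples to use \emph{exactly} one edge from $\{12,31,32\}$ and no "wasted" structure, so equality-attaining words are highly constrained (essentially each triple is a minimal-weight $3$-transition pattern). Within this family I would then construct five explicit words, analogous to the lists in the previous proofs, whose sufficient statistics are linearly independent for $k \ge 2$; candidates would be built by concatenating repeating blocks such as $321321\cdots$, $231231\cdots$, and variants with a single local modification to perturb one coordinate, mirroring the "$\cdots 2131$" and "$\cdots 3121$" tails used in Proposition~\ref{prop:1}. The linear independence is then a routine rank computation on an explicit $5 \times 6$ integer matrix whose entries are affine in $k$, checkable for $k \ge 2$. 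The genuinely delicate part is confirming that each constructed word both lies in $\wordsnlt$ (no self-loops, correct length $3k+1$) and actually attains equality, i.e.\ respects the one-positive-edge-per-triple condition; everything after that is bookkeeping.
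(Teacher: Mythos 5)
Your non-negativity argument is correct and is exactly the paper's: apply Lemma~\ref{lem:2} to each of the $k$ consecutive triples of transitions and use the identity $x_{13}+x_{21}+x_{23}=3k-(x_{12}+x_{31}+x_{32})$ to see that the facet inequality is equivalent to $x_{12}+x_{31}+x_{32}\ge k$. One small correction: the permutation you need is $(i,j,t)=(3,1,2)$, since then $\{ij,jt,it\}=\{31,12,32\}$; your suggested $(1,2,3)$ gives $\{12,23,13\}$, which is not the positive edge set (your hedge ``whichever permutation works'' does save you, because the required permutation exists).

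The gap is in the dimensionality half, which is the real content of a facet proof: you never exhibit five equality words, and one of the two seeds you propose is not an equality word at all. By your own (correct) characterization, equality forces every consecutive triple to contain \emph{exactly} one edge from $\{12,31,32\}$; but in $231231\cdots$ each triple $2312$ has transitions $23,31,12$, i.e.\ two positive edges ($31$ and $12$), so this word gives $x_{12}+x_{31}+x_{32}=2k$ and lies strictly inside the half-space, not on the supporting hyperplane. (Your other seed $321321\cdots$ is fine: its triples $3213$ are loops with transitions $32,21,13$, contributing exactly one.) The paper pushes the equality analysis further before choosing words: a triple contributing exactly $1$ is either one of the loops $1321$, $3213$, $2132$, or one of the three-step moves $2121$ ($2\to1$), $1313$ ($1\to3$), $2323$ ($2\to3$), and the only way to chain moves is $2\to1\to3$ (or $2\to3$ alone); equality words are precisely such chains with loops inserted. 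It then lists five explicit words, namely $2121321321\cdots1321$, $1313213\cdots3213$, $2323213\cdots3213$, $2132132\cdots2132$, $2121313\cdots1313$, with statistics $[1,k{-}1,k{+}1,0,0,k{-}1]$, $[0,k{+}1,k{-}1,0,1,k{-}1]$, $[0,k{-}1,k{-}1,2,0,k]$, $[0,k,k,0,0,k]$, $[1,2(k{-}1),2,0,k{-}1,0]$, which are linearly independent for $k\ge 2$. Until you carry out this construction with seeds that actually satisfy the one-positive-edge-per-triple condition, you have only shown that $\ve c\cdot\ve x\ge 0$ is a valid inequality, not that it defines a facet.
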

\begin{proof}
For non-negativity, consider the inequality
\begin{equation}
\label{eq:4}
2(x_{12} + x_{31} + x_{32}) \ge x_{13} + x_{21} + x_{23}
\end{equation}
Since $x_{13} + x_{21} + x_{23} = T{-}1 - (x_{12} + x_{31} + x_{32})
=3k-(x_{12} + x_{31} + x_{32})$,
the expression~\eqref{eq:4} is equivalent to 
\begin{equation}
\label{eq:5}
x_{12} + x_{31} + x_{32} \ge k .
\end{equation}
If we consider paths in triples of transitions, 
\eqref{eq:5} follows from Lemma \ref{lem:2}.

Now for checking the dimensionality
we consider the case when the inequality \eqref{eq:5} becomes an equality.   By the induction above, if  we divide a path into triples of
transitions (edges), then in each triple only one of $\tilde x_{12}, \, \tilde x_{31},\, \tilde x_{32}$ has to be 1.
That is, we proved above that for every three transitions (edges) the left-hand-side
of equation \eqref{eq:5} increases by one. For equality to hold, the LHS can
only increase by exactly one.
Knowing this, we consider the cases for which three transitions increases
the LHS of Equation \eqref{eq:5} by exactly one.
In three transitions, a path can either come back to the same vertex or
move to another vertex.  In the former case (say $ijti$), 
the following three loops $1321$,$3213$,$2132$ increases $x_{32}$ by 1.
Another case going from $i$ to $j$ in three transitions are of the form
\[
ijij, ijtj, itij,
\]
where $i,j,t$ are different. Then appropriate ones are
only the following ones:
\[
2121,1313,2323.
\]
Therefore in three transitions, we go $2\rightarrow 1$, $1\rightarrow 3$ or
$2\rightarrow 3$. Among these three, the only possible connection is
\[
2\rightarrow 1\rightarrow 3
\]
(or $2\rightarrow 3$ alone).  Then the loops are inserted at any point. Thus, we can consider the following paths 
\begin{multline}\label{pathsforeq5}
\qquad 2121321321\cdots 1321, \, 
1313213\cdots 3213, \,
2323213\cdots 3213, \\
2132132\cdots 2132,\,
2121313\cdots 1313,\qquad 
\end{multline}
with sufficient statistics given by the vectors
\begin{align*}
[1,k{-}1,k{+}1,0,0,k{-}1], \,
[0,k{+}1,k{-}1,0,1,k{-}1], \, 
[0,k{-}1,k{-}1,2,0,k], \\
[0,k,k,0,0,k],\,
[1,2(k{-}1), 2, 0, k{-}1, 0];
\end{align*}
respectively.  For $k\ge 2$ it is easily checked that
these vectors are linearly independent and satisfy Equation \eqref{eq:5} with equality. 
\end{proof} 

\begin{proposition}
\label{prop:4}
For any $T\ge 5$ of the form $T=3k{+}2$, $k\ge 1$, 
\[
\ve c=[2k{+}1,\, -k,\, -k,\, -k,\, 2k{+}1,\, 2k{+}1]
\]
defines a facet of $\cP^{T}$ modulo $\mathfrak{S}_3$.
\end{proposition}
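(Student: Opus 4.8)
The plan is to mirror the proof of Proposition~\ref{prop:3}, because the defining inequality $\ve c\cdot\ve a^T_{\ve w}\ge 0$ reduces to the very same combinatorial condition. Writing $A=x_{12}+x_{31}+x_{32}$ and $B=x_{13}+x_{21}+x_{23}$ for the transition counts of a word $\ve w$, and using that every path of length $T$ has exactly $T-1=3k+1$ transitions, so that $A+B=3k+1$, a direct computation gives
\[
\ve c\cdot\ve a^T_{\ve w}=(2k{+}1)A-kB=(2k{+}1)A-k(3k{+}1-A)=(3k{+}1)(A-k).
\]
Since $3k+1>0$, the non-negativity condition i) is therefore equivalent to $A=x_{12}+x_{31}+x_{32}\ge k$, which is exactly the reduced inequality~\eqref{eq:5} that drives Proposition~\ref{prop:3}.

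First I would establish non-negativity. Partition the $3k+1$ consecutive transitions of $\ve w$ into $k$ disjoint blocks of three consecutive transitions together with one leftover transition. Applying Lemma~\ref{lem:2} with $(i,j,t)=(3,1,2)$, so that the triple $\{ij,jt,it\}$ equals $\{31,12,32\}$, each block contributes at least $1$ to $A$, while the leftover transition contributes at least $0$; summing over the blocks gives $A\ge k$, which is i).

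For dimensionality ii) I would first read off when equality $A=k$ can hold: each of the $k$ blocks must use exactly one of the positive edges $12,31,32$, and the leftover transition must be one of the negative edges $13,21,23$. The structural analysis of Proposition~\ref{prop:3} already classifies the triples of transitions that raise $A$ by exactly one (essentially the $2\to1\to3$ pattern with the loops $232,323$ inserted), so I would take the five witnessing words of~\eqref{pathsforeq5} and lengthen each by a single negative transition, chosen so that no self-loop is created --- for instance appending $\to 1$ or $\to 3$ after a word that terminates at vertex $2$, or prepending such an edge. This produces five words of length $T=3k+2$ with $A=k$, hence lying on the facet, and I would then record their sufficient statistics and verify that the resulting $5\times 6$ matrix has rank $5$.

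The main obstacle is this dimensionality step: one must exhibit five equality-words whose sufficient statistics are genuinely linearly independent, and the placement of the extra negative transition has to respect the no-self-loop constraint at whichever vertex the underlying word ends (note that vertex $3$ has no outgoing negative edge, since its out-edges $31,32$ are both positive, so the extra edge cannot simply be appended there). Once five valid words are fixed, checking rank $5$ is a routine linear-algebra computation entirely analogous to the one in Proposition~\ref{prop:3}; I expect at worst a split according to the parity of $k$, as in the earlier propositions.
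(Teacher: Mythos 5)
Your non-negativity argument is correct and is, in substance, the paper's own: the paper drops the last transition and invokes Proposition~\ref{prop:3}, whose proof is exactly your decomposition into triples via Lemma~\ref{lem:2}, so the algebraic reduction to $x_{12}+x_{31}+x_{32}\ge k$ and its verification coincide.

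The dimensionality step, however, contains a step that would fail as written. Your plan is to lengthen each of the five words of~\eqref{pathsforeq5} by one negative edge (one of $13$, $21$, $23$). But the third and fifth of those words, $2323213\cdots 3213$ and $2121313\cdots 1313$, begin at state $2$ and end at state $3$; no negative edge leaves state $3$ (its out-edges $31,32$ are positive, as you note), and also no negative edge enters state $2$ (its in-edges $12,32$ are positive, which you do not note). Hence these two words admit no facet-preserving one-edge extension at either end: prepending, your suggested fallback, is just as impossible as appending, since any prepended edge would be $12$ or $32$ and would push $x_{12}+x_{31}+x_{32}$ to $k+1$, off the facet. The paper's proof sidesteps exactly this obstruction: it extends only the first, second and fourth words of~\eqref{pathsforeq5} and replaces the other two by different equality words, namely $2(321)^k$ and $(321)^k3$, extended to $2(321)^k\underline{3}$ and $\underline{1}(321)^k3$. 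Two smaller points: staying on the facet is indeed automatic because a negative edge does not change $x_{12}+x_{31}+x_{32}$, as you say; but the final rank-$5$ check cannot be delegated to Lemma~\ref{lem:1} (different unit vectors are added to different words, not a common one), so it must be verified directly --- for the paper's choice of words it holds for every $k\ge 1$, with no parity split needed.
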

\begin{proof}
For non-negativity, consider
\begin{equation}
\label{eq:4_4}
(2k{+}1)(x_{12} + x_{31} + x_{32}) \ge k(x_{13} + x_{21} + x_{23}) .
\end{equation}
Since $x_{13} + x_{21} + x_{23} = T{-}1 - (x_{12} + x_{31} + x_{32})
=3k{+}1-(x_{12} + x_{31} + x_{32})$,
the inequality \eqref{eq:4_4} is equivalent to 
\begin{equation*}
\label{eq:4_5}
(2k{+}1)(x_{12} + x_{31} + x_{32}) \ge k(3k{+}1 - (x_{12} + x_{31} + x_{32})),
\end{equation*}
which simplifies to 
\begin{equation}
\label{eq:4_6}
x_{12} + x_{31} + x_{32} \ge k.
\end{equation}

For a path of length $3k{+}2$, consider omitting
the last transition.  Then we have a path of length $3k{+}1$.
The inequality already holds for this shortened path
by Proposition \ref{prop:3}. Since
the last transition only increases the transition counts, the
same inequality holds for $3k{+}2$.  This proves the non-negativity.

For dimensionality, we can find five paths by adding one of the transitions $2\rightarrow 1$, $1\rightarrow 3$, or $2\rightarrow 3$ either at the end or at the beginning of the paths 
in Proposition \ref{prop:1}. 
In this way, we obtain the following paths
\begin{align*}
2121321321\cdots 1321\underline{3}, \, 
2321321321\cdots 1321\underline{3}, \, 
\underline{2}1313213\cdots 3213, \\
\underline{1}3213\cdots3213,\,
2132132\cdots 2132\underline{1}, \,
\end{align*}
with the following vectors of frequencies:
\begin{align*}
 [1,k,k{+}1,0,0,k{-}1], \,
[0,k,k{+}1,1,0,k{-}1], \,
[0,k{+}1,k,0,1,k{-}1], \\
[0,k{+}1,k,0,0,k], \,
[0,k,k{+}1, 0,0,k];
\end{align*}
which are easily checked to be linearly independent and satisfy \eqref{eq:4_6} with
equality.

\end{proof}

The following lemma will be useful to show the facets of $\cP^T$ when $T\geq 6$ is even.
\begin{lem}\label{lemForProp5}
Let $T=2k$, with $k\ge 1$. Then, the inequality $2x_{12} + x_{13} +x_{32} \ge k{-}1$ holds for every path. Moreover, the inequality is strict for every path ending at $s_T = 2$. 

\end{lem}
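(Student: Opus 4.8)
The plan is to rewrite the linear form $2x_{12}+x_{13}+x_{32}$ in terms of vertex degrees and then in terms of how often each state is visited. First I would observe the identity
\[
2x_{12}+x_{13}+x_{32}=(x_{12}+x_{13})+(x_{12}+x_{32})=x_{1+}+x_{+2},
\]
so the quantity to bound is the out-degree of vertex $1$ plus the in-degree of vertex $2$ in $G(\ve w)$. Since $\ve w=s_1\cdots s_T$ is a single path, the out-degree of $1$ counts the occurrences of $1$ among $s_1,\ldots,s_{T-1}$ and the in-degree of $2$ counts the occurrences of $2$ among $s_2,\ldots,s_T$. Writing $n_i$ for the number of occurrences of state $i$ in $\ve w$ and using the Iverson bracket $[\,\cdot\,]$, this gives
\[
x_{1+}+x_{+2}=(n_1-[s_T=1])+(n_2-[s_1=2])=2k-n_3-[s_T=1]-[s_1=2],
\]
where the last equality uses $n_1+n_2+n_3=T=2k$. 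Hence the inequality $2x_{12}+x_{13}+x_{32}\ge k-1$ is equivalent to the bound $n_3+[s_T=1]+[s_1=2]\le k+1$.

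The second step is to control $n_3$ using the no-self-loop condition. Since no two consecutive states of $\ve w$ can both equal $3$, the state $3$ occupies a set of pairwise non-adjacent positions among $\{1,\ldots,2k\}$, so $n_3\le k$. When $n_3\le k-1$ the two boundary indicators contribute at most $2$, and the required bound $n_3+[s_T=1]+[s_1=2]\le k+1$ is immediate.

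The main point, and the only delicate case, is the extremal situation $n_3=k$. Here the $k$ copies of $3$ must occupy every other position of the word of length $2k$, i.e.\ either all odd positions $1,3,\ldots,2k-1$ or all even positions $2,4,\ldots,2k$. In the first case $s_1=3$, forcing $[s_1=2]=0$; in the second case $s_T=3$, forcing $[s_T=1]=0$. Either way at most one of the two indicators is nonzero, so again $n_3+[s_T=1]+[s_1=2]\le k+1$, which completes the proof of the inequality.

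Finally, for the strict statement, suppose $s_T=2$. Then $[s_T=1]=0$, and moreover $s_T=2\ne 3$ rules out the all-even extremal configuration, so if $n_3=k$ the threes sit at the odd positions and $s_1=3$ forces $[s_1=2]=0$. In every case $n_3+[s_1=2]\le k$, which yields $2x_{12}+x_{13}+x_{32}=2k-n_3-[s_1=2]\ge k$, strictly larger than $k-1$. The crux of the argument is thus the structural observation in the extremal case $n_3=k$; everything else is a direct degree count from the reformulation $2x_{12}+x_{13}+x_{32}=x_{1+}+x_{+2}$.
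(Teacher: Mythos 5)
Your reduction is sound and takes a genuinely different route from the paper: the identity $2x_{12}+x_{13}+x_{32}=x_{1+}+x_{+2}$, the occurrence count $x_{1+}+x_{+2}=2k-n_3-[s_T=1]-[s_1=2]$, and the resulting equivalence of the lemma with $n_3+[s_T=1]+[s_1=2]\le k+1$ are all correct. (The paper instead argues by induction on $k$, splitting a path of length $2k+2$ into its first $2k$ states and its last three states, and carrying the strictness statement for paths ending at state $2$ along as part of the inductive hypothesis.) The problem is your extremal case $n_3=k$: the claim that the threes must then occupy \emph{either} all odd positions \emph{or} all even positions is false. For $k=2$ the word $3123$ has $n_3=2=k$ with threes at positions $\{1,4\}$; for $k=3$ the word $313213$ has threes at positions $\{1,3,6\}$. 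In general, a set of $k$ pairwise non-adjacent positions in $\{1,\dots,2k\}$ is a prefix of odd positions followed by a suffix of even positions, and there are $k+1$ such sets, not two. So the dichotomy you invoke, both in the weak inequality and in the strictness argument, does not hold as stated.

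Fortunately, the two consequences you actually need are true and have short correct proofs, so the gap is repairable. (a) If $n_3=k$ then $s_1=3$ or $s_T=3$: otherwise the threes would occupy $k$ pairwise non-adjacent positions $p_1<\dots<p_k$ inside $\{2,\dots,2k-1\}$, but $p_{i+1}\ge p_i+2$ forces $p_k\ge p_1+2(k-1)\ge 2k$, a contradiction; hence $[s_1=2]+[s_T=1]\le 1$, which finishes the weak inequality. (b) If $n_3=k$ and $s_T=2\neq 3$, then the threes lie in $\{1,\dots,2k-1\}$, and the only set of $k$ pairwise non-adjacent positions there is $\{1,3,\dots,2k-1\}$ (from $p_k\ge p_1+2(k-1)$ and $p_k\le 2k-1$ one gets $p_1=1$ and all gaps equal to $2$); hence $s_1=3$, so $[s_1=2]=0$ and you get the strict bound $2x_{12}+x_{13}+x_{32}\ge k>k-1$. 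With (a) and (b) substituted for the false dichotomy, your proof is complete and correct.
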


\begin{proof}
We prove the lemma by induction on $k$.  

For $k=1$, this is a path with a single transition; thus, the statement is obvious,
because $2x_{12} + x_{13} +x_{32}$ is non-negative and for two paths $12$, $32$ ending
at $s_2=2$, we have $2x_{12} + x_{13} +x_{32}=1 \ \text{or}\ 2$.

Assume that the proposition holds for  $k$.  Now we prove that it holds for $k{+}1$.
For a path $\ve w$ let 
\begin{eqnarray*}
\ve x&=&(x_{12},x_{13},x_{21},x_{23},x_{31},x_{32})\\
&=& (x_{12}(\ve w),x_{13}(\ve w),x_{21}(\ve w),x_{23}(\ve w),x_{31}(\ve
w),x_{32}(\ve w))
\end{eqnarray*}
denote the transition counts of $\ve w$.
Consider a path $\ve w$ of length $T=2k{+}2$
\[
\ve w=s_1 \dots s_{2k} s_{2k+1} s_{2k+2}.
\]
Denote $\ve w^0=s_1 \dots s_{2k}$ and 
$\ve w^1=s_{2k} s_{2k+1} s_{2k+2}$. Then 
\begin{multline*}
\qquad 2x_{12}(\ve w) + x_{13}(\ve w) +x_{32}(\ve w) = (2x_{12}(\ve w^0) + x_{13}(\ve w^0) +x_{32}(\ve w^0))  \\
+ (2x_{12}(\ve w^1) + x_{13}(\ve w^1) +x_{32}(\ve w^1)).\qquad
\end{multline*}
The inductive assumption is that 
\[
2x_{12}(\ve w^0) + x_{13}(\ve w^0) +x_{32}(\ve w^0) \ge k{-}1
\]
and
\[
s_{2k}=2 \ \Rightarrow  \ 2x_{12}(\ve w^0) + x_{13}(\ve w^0) +x_{32}(\ve w^0) \ge k .
\]

We  prove the first statement of the proposition. If
\[
2x_{12}(\ve w^1) + x_{13}(\ve w^1) +x_{32}(\ve w^1)\ge 1,
\]
then the inequality holds for $\w$. On the other hand it is easily seen that 
if $2x_{12}(\ve w^1) + x_{13}(\ve w^1) +x_{32}(\ve w^1)=0$ then the only possible case is 
$\ve w^1=231$.  Then  we have
$s_{2k}=2$.  Hence by the second part of the inductive assumption we also have
the inequality.

We now prove the second statement of the proposition. Let $s_{2k+2}=2$. 
Note that $s_{2k+1}$ is either 1 or 3.  If  $s_{2k+1}=1$, then
\[
2x_{12}(\ve w^1) + x_{13}(\ve w^1) +x_{32}(\ve w^1) =2
\]
and the inequality for $\ve w$ is  strict.  On the other hand let 
$s_{2k+1}=3$, then there are two cases:
\[
\ve w^1=232 \ \ \text{or}\ =132 .
\]
In the former case $2x_{12}(\ve w^1) + x_{13}(\ve w^1) +x_{32}(\ve w^1) =1$, but
$s_{2k}=2$. Hence by the inductive assumption the inequality is strict.  In the latter case
$2x_{12}(\ve w^1) + x_{13}(\ve w^1) +x_{32}(\ve w^1) =2$ and the
inequality is strict.
\end{proof}

\begin{proposition}
\label{prop:5}
For any even $T\ge 6$ 
\[
\ve c = [\frac{3}{2}T{-}1, \, \frac{T}{2}, \, -\frac{T}{2}{+}1, \,  -\frac{T}{2}{+}1,\,  -\frac{T}{2}{+}1, \, \frac{T}{2}]
\]
defines a facet of $\cP^{T}$ modulo $\mathfrak{S}_3$.
\end{proposition}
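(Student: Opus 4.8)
The plan is to reduce the non-negativity i) to Lemma~\ref{lemForProp5} and then to exhibit five tight words whose sufficient statistics span a $5$-dimensional space. Writing $T=2k$, the defining vector becomes $\ve c=[3k{-}1,\,k,\,-(k{-}1),\,-(k{-}1),\,-(k{-}1),\,k]$. Since every word of length $T$ has exactly $T{-}1=2k{-}1$ transitions, any column $\ve a^T_{\ve w}$ with counts $\ve x$ satisfies $x_{21}+x_{23}+x_{31}=(2k{-}1)-(x_{12}+x_{13}+x_{32})$. Substituting this into $\ve c\cdot\ve x$ and collecting terms, I expect the clean identity
\begin{equation*}
\ve c\cdot \ve x=(2k{-}1)\bigl(2x_{12}+x_{13}+x_{32}-(k{-}1)\bigr).
\end{equation*}
Because $2k{-}1>0$, the non-negativity i) is then exactly the inequality $2x_{12}+x_{13}+x_{32}\ge k{-}1$ of Lemma~\ref{lemForProp5}, so step i) costs nothing beyond this bookkeeping.

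For the dimensionality ii), the same identity shows that $\ve c\cdot\ve a^T_{\ve w}=0$ holds precisely when $2x_{12}+x_{13}+x_{32}=k{-}1$, and by the second (strict) part of Lemma~\ref{lemForProp5} every such word must end at a state different from $2$. I would then produce five words meeting this budget with equality. Four natural candidates avoid the transition $12$ entirely and simply alternate the cheap reciprocal pairs $13/31$ and $23/32$ after a free lead-in edge, namely
\begin{align*}
3131\cdots 31,\quad 2323\cdots 23,\quad 2131\cdots 31,\quad 2313\cdots 13,
\end{align*}
with respective statistics $[0,k{-}1,0,0,k,0]$, $[0,0,0,k,0,k{-}1]$, $[0,k{-}1,1,0,k{-}1,0]$, and $[0,k{-}1,0,1,k{-}1,0]$; a direct check gives these four rank $4$.

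Since all four of those vectors have $x_{12}=0$, and the affine slice of the two equalities $\sum_{i\neq j}x_{ij}=2k{-}1$ and $2x_{12}+x_{13}+x_{32}=k{-}1$ with $x_{12}=0$ spans only a $4$-dimensional linear space, the fifth word must genuinely use the expensive transition $12$. This is the one delicate point, and I expect it to be the main obstacle: the minimal-weight walks prefer to avoid $12$ altogether, so one must ``spend'' its weight of $2$ by routing through a $2\to 3\to 1\to 2\to 3$ detour and then compensating with the cheap $13/31$ alternation, ending at state $1$. The resulting word has statistics $[1,k{-}3,0,2,k{-}1,0]$ and is valid for $k\ge 3$, i.e.\ for $T\ge 6$. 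Since this vector has $x_{12}=1\neq 0$ while the previous four have $x_{12}=0$, adjoining it raises the rank to $5$, which establishes ii). Once this fifth, $12$-using path is in hand, the linear-independence verification is routine and the proof concludes.
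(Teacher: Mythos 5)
Your proposal is correct and follows essentially the same route as the paper's proof: the identical substitution of $x_{21}+x_{23}+x_{31}=2k{-}1-(x_{12}+x_{13}+x_{32})$ reduces non-negativity to the inequality $2x_{12}+x_{13}+x_{32}\ge k{-}1$ of Lemma~\ref{lemForProp5}, and your five tight paths (with the same key $12$-using path of statistics $[1,k{-}3,0,2,k{-}1,0]$, valid exactly for $k\ge 3$ as in the paper) establish dimensionality. The only differences are cosmetic: your second path $2323\cdots 23$ replaces the paper's $32\cdots 3231$, and your rank argument is organized as rank-$4$ plus the $x_{12}$-coordinate observation, with a pleasant extra dimension-count explaining why any fifth tight path must use the transition $12$.
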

\begin{proof} Write $T=2k$, $k\ge 3$.
For non-negativity, consider
\[
(3k{-}1)x_{12} + k(x_{13} + x_{32}) \ge (k{-}1)(x_{21} + x_{23} + x_{31}) .
\]
Substituting
\[
x_{21} + x_{23} + x_{31} = T{-}1 - (x_{12} + x_{13} + x_{32}) = 2k{-}1 - (x_{12} + x_{13} + x_{32})
\]
into the above and collecting terms, we have
\[
(4k{-}2)x_{12} + (2k{-}1)(x_{13} + x_{32}) \ge (k{-}1)(2k{-}1)
\]
or equivalently 
\begin{equation}
\label{eq:6}
2x_{12} + x_{13} + x_{32} \ge k{-}1,
\end{equation}
which holds by Lemma \ref{lemForProp5}

For dimensionality, consider the following 5 paths.
\[
31\ldots 31, \ 32\ldots3231, \ 2131\ldots31, \ 2313\ldots13, \ 23123131\ldots31
\]
The sufficient statistics for these  paths are
\begin{multline*}
\qquad [0,k{-}1,0,0,k,0],\; 
[0,0,0,k{-}1,1,k{-}1],\; 
[0,k{-}1,1,0,k{-}1,0],\\
 [0,k{-}1,0,1,k{-}1,0],\; 
 [1,k{-}3,0,2,k{-}1,0].\qquad
\end{multline*}
For $k\ge 3$, linear independence of these vectors can be easily checked. 
\end{proof}

We now state some results that will be useful to treat the remaining case when $T=3l$.
\begin{lem}
\label{lem:1}
Suppose that $\ve u_1, \dots,\ve u_l\in {\mathbb R}^m$ are linearly independent vectors
such that $[1,1,\dots,1]\cdot  \ve u_j=c>0$ is a positive constant for $j=1,\dots,l$.
Then for any non-negative vector $\ve w\in {\mathbb R}_{\geq 0}^m$, 
$\ve u_1 + \ve w, \dots, \ve u_l+\ve w$ are linearly independent.
\end{lem}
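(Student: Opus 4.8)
The plan is to argue directly from a hypothetical linear dependence relation among the translated vectors, using the all-ones functional $[1,\dots,1]$ to isolate the contribution of $\ve w$. Suppose that $\sum_{j=1}^l \lambda_j(\ve u_j + \ve w) = \ve 0$ for some scalars $\lambda_1,\dots,\lambda_l$, and set $\mu := \sum_{j=1}^l \lambda_j$. Collecting the $\ve w$ terms, this relation rearranges to $\sum_{j=1}^l \lambda_j \ve u_j + \mu\,\ve w = \ve 0$. The goal is to conclude that every $\lambda_j$ vanishes.

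First I would apply the functional $[1,\dots,1]$ to both sides of this rearranged relation. Since $[1,\dots,1]\cdot\ve u_j = c$ for all $j$ by hypothesis, the left-hand side becomes $c\mu + \mu\big([1,\dots,1]\cdot\ve w\big) = \mu\big(c + [1,\dots,1]\cdot\ve w\big)$. Because $c > 0$ and $\ve w \in \R_{\ge 0}^m$ forces $[1,\dots,1]\cdot\ve w \ge 0$, the factor $c + [1,\dots,1]\cdot\ve w$ is strictly positive, so we must have $\mu = 0$.

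With $\mu = 0$ in hand, the term $\mu\,\ve w$ drops out of the rearranged relation, leaving $\sum_{j=1}^l \lambda_j \ve u_j = \ve 0$. The linear independence of $\ve u_1,\dots,\ve u_l$ then forces $\lambda_1 = \dots = \lambda_l = 0$, which completes the argument.

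The argument is short and presents no genuine obstacle; the only point that requires care is recognizing that the all-ones functional cleanly decouples the $\ve w$ contribution, and that the two hypotheses ($c > 0$ and nonnegativity of $\ve w$) are precisely what guarantee $c + [1,\dots,1]\cdot\ve w \neq 0$. This nonvanishing is the lynchpin that kills the coefficient sum $\mu$ and thereby reduces the claim to the assumed independence of the $\ve u_j$.
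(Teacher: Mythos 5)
Your proof is correct and is essentially identical to the paper's own argument: both start from a dependence relation $\sum_j \lambda_j(\ve u_j + \ve w) = \ve 0$, apply the all-ones functional to conclude that the coefficient sum vanishes (using $c + [1,\dots,1]\cdot\ve w > 0$), and then invoke the independence of the $\ve u_j$. No differences worth noting.
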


\begin{proof}
For scalars $\alpha_1, \dots, \alpha_l$ consider
\begin{equation}
\label{eq:11}
0 = (\ve u_1 {+} \ve w) \alpha_1 + \dots + (\ve u_l {+}\ve w)\alpha_l
  = \ve u_1 \alpha_1 + \dots + \ve u_l \alpha_l + \ve w (\alpha_1+ \dots +\alpha_l)
\end{equation}
Taking the inner product with $[1,1,\dots,1]$ we have
\[
0 = (c + [1,1,\dots,1]\cdot {\ve w}) (\alpha_1 + \dots+ \alpha_l).
\]
Here $c+[1,1,\dots,1]\cdot {\ve w} > 0$.  Hence we have $0=\alpha_1 + \dots + \alpha_l$.
But then \eqref{eq:11} reduces to
\[
0=\ve u_1 \alpha_1 + \dots + \ve u_l \alpha_l
\]
By linear independence of $\ve u_1,\dots,\ve u_l$ we have
$\alpha_j=0$, $j=1,\dots,l$.
\end{proof}


\begin{lem}
\label{lem:3} 
Let $\tilde x_{ij}$, $1\le i\neq j \le 3$, be transition counts of
three consecutive transitions and let 
$i,j,t$ be distinct.
Then
\begin{equation}
\label{eq:3steps-1}
2   \tilde x_{ij} +  \tilde x_{it} +   \tilde x_{tj} \ge   \tilde x_{ji}.
\end{equation}
The equality holds for the following paths: $jiji$, $jtji$, $jiti$. These three
paths start at $j$ and end at $i$.
Furthermore, if the difference of both sides is 1 then
the possible transitions in three steps are 
$j\rightarrow t$, $t\rightarrow i$, and self-loops
$i\rightarrow i$, $j\rightarrow j$, $t\rightarrow t$.
Finally, if the difference of both sides is 2, then
the possible transitions in three steps are
$t\rightarrow j$,  and $i\rightarrow t$, and self-loops
$i\rightarrow i$, $j\rightarrow j$, $t\rightarrow t$.
\end{lem}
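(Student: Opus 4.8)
The plan is to regard a triple of consecutive transitions as a $3$-edge walk $s_1s_2s_3s_4$ on the vertex set $\{i,j,t\}$ with no self-loops, and to note that the two sides of \eqref{eq:3steps-1}, together with the starting vertex $s_1$ and ending vertex $s_4$, are all determined by this walk. Since there are only $3\cdot 2^{3}=24$ such walks, every assertion in the lemma is in principle a finite check; the goal of the argument is to organize this check around a single identity rather than to enumerate by hand. Set $D:=2\tilde x_{ij}+\tilde x_{it}+\tilde x_{tj}-\tilde x_{ji}$ for the difference of the two sides of \eqref{eq:3steps-1}.

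The key device I would introduce is a net-degree identity. Writing $\delta_i:=(\tilde x_{ij}+\tilde x_{it})-(\tilde x_{ji}+\tilde x_{ti})$ for the out-degree minus in-degree of vertex $i$ in the walk, a one-line computation gives
\[
D \;=\; \delta_i + \tilde x_{ij}+\tilde x_{tj}+\tilde x_{ti}.
\]
Because the walk has $3$ edges, $\delta_i\in\{-1,0,+1\}$, and $\delta_i=-1$ holds exactly when the walk ends at $i$ without starting there. If $\delta_i\ge 0$ then the right-hand side is a sum of non-negative terms and \eqref{eq:3steps-1} is immediate. If $\delta_i=-1$ the final edge enters $i$: when that edge is $t\to i$ we have $\tilde x_{ti}\ge1$ and are done, and when it is $j\to i$ I would argue exactly as in Lemma \ref{lem:2} that $\tilde x_{ij}+\tilde x_{tj}\ge1$, since otherwise vertex $j$ has in-degree $0$ while the walk must revisit $j$ at $s_3$, which is impossible on three edges. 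This proves the inequality and simultaneously locates the equality cases: $D=0$ forces $\delta_i=-1$ together with exactly one unit distributed among $\tilde x_{ij},\tilde x_{tj},\tilde x_{ti}$ (the sub-case $\delta_i=0$ with all three counts zero is ruled out because the allowed edges $ji,it,jt$ cannot form a $3$-edge walk). Tracing these possibilities yields precisely the three walks $jiji$, $jtji$, $jiti$, all running from $j$ to $i$.

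For the finer claims I would reuse the same identity with $D$ set equal to $1$ and to $2$, reading off the resulting constraints on the first and last transitions. The clean part is that whenever $s_1\ne s_4$ the value of $D$ is determined by the ordered pair $(s_1,s_4)$: one checks that $j\to t$ and $t\to i$ force $D=1$, that $t\to j$ and $i\to t$ force $D=2$, and that $i\to j$ forces $D=3$ (so $i\to j$ never enters the regime $D\le2$). This accounts for all the non-loop transition types listed in the statement.

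The main obstacle, and the only part that genuinely requires case work beyond the identity, is the self-loop triples with $s_1=s_4$: here $D$ is \emph{not} fixed by the coarse endpoint data, and a single self-loop vertex can produce either $D=1$ or $D=2$ depending on the interior vertices $s_2,s_3$. I would dispatch these by listing, for each of the three self-loop vertices, the two admissible interior patterns and computing $D$ directly from the identity. This is exactly why the lemma records the self-loops $i\to i$, $j\to j$, $t\to t$ in both the ``difference $1$'' and the ``difference $2$'' cases, and it is the step where one cannot avoid a small explicit verification.
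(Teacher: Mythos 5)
Your proof is correct, and it takes a genuinely different route from the paper's. The paper splits on the value of $\tilde x_{ji}$: for $\tilde x_{ji}\le 1$ the inequality \eqref{eq:3steps-1} is immediate from Lemma \ref{lem:2}, while $\tilde x_{ji}=2$ forces the path $jiji$; the equality, difference-$1$, and difference-$2$ statements are then obtained by enumerating, for each value of the pair $(\tilde x_{ji},\tilde x_{ij})$, the paths realizing those counts. You instead organize everything around the identity $D=\delta_i+\tilde x_{ij}+\tilde x_{tj}+\tilde x_{ti}$ (which is correct: expanding $\delta_i$ gives exactly $2\tilde x_{ij}+\tilde x_{it}+\tilde x_{tj}-\tilde x_{ji}$) together with the walk-degree constraint $\delta_i\in\{-1,0,1\}$, and around the endpoints $(s_1,s_4)$ of the walk. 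This buys a structural explanation of why the inequality holds: the only nontrivial case, $\delta_i=-1$ with final edge $j\to i$, is settled because the edge $s_2\to s_3$ enters $j$, forcing $\tilde x_{ij}+\tilde x_{tj}\ge 1$ (your phrasing ``impossible on three edges'' is slightly off, but this is the substance of your argument, and it is the same kind of reasoning as Lemma \ref{lem:2}); the three equality paths $jiji$, $jtji$, $jiti$ then fall out with almost no enumeration. Your endpoint classification --- $j\to i$ gives $D=0$, $j\to t$ and $t\to i$ give $D=1$, $t\to j$ and $i\to t$ give $D=2$, $i\to j$ gives $D=3$, which I verified over all open walks --- is in fact sharper than what the lemma asserts, since it also establishes the converse implication that the endpoint pair determines $D$. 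What your route does not buy is less case work overall: the endpoint claim plus the six closed walks amounts again to inspecting all $24$ walks, merely sorted by endpoints rather than by edge multiplicities, so the two proofs are comparable in length. The paper's version has the advantage of reusing Lemma \ref{lem:2} verbatim and staying entirely within the combinatorics of the counts; yours has the advantage of isolating the genuinely irreducible case work (the closed walks, where $D$ is not determined by the endpoint data) and making the role of the degree constraint explicit.
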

\begin{proof}
If $ \tilde x_{ji}\le 1$, then the inequality is obvious from Lemma \ref{lem:2}.
If $ \tilde x_{ji}=2$, then the only possible path is $jiji$, for which
the equality holds in 
\eqref{eq:3steps-1}.

We now consider the values of the difference of both sides.
First we determine paths, where the equality holds.
$jiji$ is the unique solution for $ \tilde x_{jt}=2$.  Consider
the case $ \tilde x_{ji}=1$. Then the equality holds only
if $ \tilde x_{ij}=0$ and one of $ \tilde x_{it}$ and $ \tilde x_{tj}$ is 1.
It is easy to check that the former case corresponds only to $jiti$ and
the latter case corresponds only to $jtji$.

We now enumerate the cases that the difference is 1.
If $ \tilde x_{ji}=0$, then $ \tilde x_{ij}=0$ and one of
$ \tilde x_{it}$ or $ \tilde x_{tj}$ is zero. This is only 
possible for the paths $jtit$ or $tjti$.\ 
(Recall that $ \tilde x_{ji}=2$ leads to $jiji$, for which the 
difference is zero, as treated above.)\ 
Now consider $ \tilde x_{ji}=1$. The case $ \tilde x_{ij}=1$
corresponds to $jijt$ or $tiji$.
The case $ \tilde x_{ij}=0$ corresponds the loop
$jitj$, $itji$, or $tjit$. We now see that
the transitions in three steps are 
$j\rightarrow t$,   $t\rightarrow i$,
or the self-loops $i\rightarrow i$, $j\rightarrow j$, $t\rightarrow t$.

Finally we enumerate the cases that the difference is 2.
It is easy to see that $ \tilde x_{ji}\le 1$.
First suppose $ \tilde x_{ji}=0$.  If $ \tilde x_{ij}=1,  \tilde x_{it}= \tilde x_{tj}=0$,
then this corresponds to loops (in reverse direction than in the previous 
case) $ijti$, $jtij$, $tijk$, resulting in self-loops in three steps.
If $ \tilde x_{ij}=0,  \tilde x_{it}= \tilde x_{tj}=1$, then 
this corresponds to $titj$ or $itjt$.
If  $ \tilde x_{ij}=0,  \tilde x_{it}=2,  \tilde x_{tj}=0$, then 
this corresponds to $itit$.  Similarly 
$ \tilde x_{ij}=0,  \tilde x_{it}=0,  \tilde x_{tj}=2$ corresponds to
$tjtj$.
Second suppose $ \tilde x_{ji}=1$. LHS has to be 3.
Then $ \tilde x_{ij}=1$ and one of $ \tilde x_{it}$ and $ \tilde x_{tj}$ is 1.
It is easy to see that these correspond to paths
$tjij$ and $ijit$.  Then we see that in three steps, the possible
transitions are $t\rightarrow j$, $i\rightarrow t$ or
the three self-loops.
\end{proof}

Using Lemma \ref{lem:3} 
we now consider 6 consecutive
transitions (in two triples).  Let 
$\bar x_{ij}$, $1\le i\neq j \le 3$, denote transition counts of
6 consecutive transitions.  Then we have the following lemma.

\begin{lem}
\label{lem:3a} 
Let $i,j,t$ be distinct.
Then
\begin{equation}
\label{eq:6steps}
2 \bar x_{ij} + \bar x_{it} + \bar x_{tj} \ge \bar x_{ji} + 1.
\end{equation}
When the equality holds, then the path has to start from $j$ and
end at $i$ in 6 steps.
When the difference of both sides is 1, then the possible
transitions in 6 steps are $j\rightarrow i$, $j\rightarrow t$ and $t\rightarrow i$.
\end{lem}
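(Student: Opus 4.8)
The plan is to split the six consecutive transitions into two triples that share a middle vertex and to apply Lemma~\ref{lem:3} to each triple additively. Write the six-step path as $s_1 s_2 \cdots s_7$, take the first triple to be $s_1 s_2 s_3 s_4$ and the second to be $s_4 s_5 s_6 s_7$, and let $\tilde x^{(1)}_{ab}$ and $\tilde x^{(2)}_{ab}$ be their transition counts, so that $\bar x_{ab} = \tilde x^{(1)}_{ab} + \tilde x^{(2)}_{ab}$. Setting $D := 2\bar x_{ij} + \bar x_{it} + \bar x_{tj} - \bar x_{ji}$ and $D^{(r)} := 2\tilde x^{(r)}_{ij} + \tilde x^{(r)}_{it} + \tilde x^{(r)}_{tj} - \tilde x^{(r)}_{ji}$, inequality~\eqref{eq:3steps-1} gives $D^{(1)}, D^{(2)} \ge 0$, and adding the two instances yields $D = D^{(1)} + D^{(2)} \ge 0$.

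The extra $+1$ in~\eqref{eq:6steps} comes from the observation that the two triples meet at the single vertex $s_4$, which rules out $D = 0$. Indeed, Lemma~\ref{lem:3} states that any triple with $D^{(r)} = 0$ is one of $jiji$, $jtji$, $jiti$, each starting at $j$ and ending at $i$. If both triples had $D^{(r)} = 0$, the first would force $s_4 = i$ and the second $s_4 = j$, contradicting $i \ne j$. Hence at least one $D^{(r)} \ge 1$, so $D \ge 1$, which is exactly~\eqref{eq:6steps}.

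For the two refinements I would run a case analysis on the value of the shared vertex $s_4$, combining it with the classification in Lemma~\ref{lem:3} of the admissible net three-step transitions at each value of $D^{(r)}$ (namely $j\to i$ at $D^{(r)}=0$; $j\to t$, $t\to i$ and the self-loops $i\to i$, $j\to j$, $t\to t$ at $D^{(r)}=1$; and $t\to j$, $i\to t$ together with the self-loops at $D^{(r)}=2$). When $D = 1$ one triple runs $j \to i$ and the other has difference one; matching at $s_4$ forces the difference-one triple to be a net self-loop, so the concatenation runs $j \to i$ over six steps, proving the equality statement. When $D = 2$ the pair $(D^{(1)}, D^{(2)})$ is $(0,2)$, $(2,0)$ or $(1,1)$, and gluing the compatible transitions at $s_4$ produces the net six-step transitions $j\to i$, $j\to t$, $t\to i$ listed in the lemma.

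The main obstacle is the bookkeeping in the $(1,1)$ subcase of $D = 2$, where both triples contribute a difference-one transition and every compatible gluing at $s_4$ must be checked. The delicate point is that the difference-one list includes the net self-loops $i\to i$, $j\to j$, $t\to t$: gluing two self-loops of the same type (for instance $itji$ followed by $itji$, giving the word $itjitji$) produces a net six-step self-loop still having $D = 2$. Such a configuration is not among the three transitions stated, so I would either invoke the start/end constraints available in the ambient path where the lemma is applied to discard it, or record that the stated conclusion tacitly omits these net self-loops; all remaining gluings reduce cleanly to $j\to i$, $j\to t$, or $t\to i$.
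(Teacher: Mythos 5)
Your proof follows essentially the same route as the paper's: it, too, splits the six steps into two triples glued at the middle vertex, applies Lemma~\ref{lem:3} to each, rules out total difference zero because two difference-zero triples would both have to run from $j$ to $i$ and so cannot be concatenated, and derives the refinements by matching admissible three-step transitions at the shared vertex; your treatment of the inequality and of the equality case is exactly the paper's. The subtlety you flag in the $(1,1)$ subcase, however, is a genuine oversight in the paper's own proof, not a defect of your write-up: the paper's case analysis there lists only $j\to t$, $t\to i$, $j\to i$ and silently drops the gluing of two equal net self-loops. Your example is decisive: the word $itjitji$ is an admissible path, has $\bar x_{it}=\bar x_{tj}=\bar x_{ji}=2$ and all other counts zero, so the left side of \eqref{eq:6steps} equals $4$ while the right side equals $3$ (difference one), yet its net six-step transition is the self-loop $i\to i$ (likewise $jitjitj$ and $tjitjit$ give nets $j\to j$ and $t\to t$). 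So the last sentence of the lemma, read literally, is false; the correct conclusion is that when the difference is one the possible transitions are $j\to i$, $j\to t$, $t\to i$, or a net self-loop. As you suspected, this does not damage the downstream application: the only place Lemma~\ref{lem:3a} is used is Lemma~\ref{lem:5}, and what is needed there is that a six-step block whose net transition is $i\to t$ or $i\to j$ satisfies \eqref{eq:profit}, i.e.\ has difference at least two; a net self-loop block is neither of these, so the corrected statement supports that argument unchanged.
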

\begin{proof}
From the previous lemma, 
\[
2 \bar x_{ij} + \bar x_{it} + \bar x_{tj} \ge \bar x_{ji} 
\]
but the equality is impossible, because then the path would have to go from state $j$ to state $i$
in three steps twice.  Hence 
\eqref{eq:6steps} holds.

Now consider the case of equality. Then differences of two triples are 0 and 1.
By the previous lemma, the order of 0 before 1 only corresponds to 
$j\rightarrow i \rightarrow i$.   The order of 1 before 0 corresponds
to  $j\rightarrow j \rightarrow i$.  Hence in both cases, the paths
have to start from $j$ and end at $i$ in 6 steps.

Now consider the case of difference of 1, i.e.
\[
2 \bar x_{ij} + \bar x_{it} + \bar x_{tj} = \bar x_{ji}  + 2 .
\]
The two differences of two triples are $(0,2)$, $(1,1)$ or $(2,0)$.
In the case of $(0,2)$, by the previous lemma, the transitions in 6 steps
are $j\rightarrow i$ , $j\rightarrow t$. 
In the case of $(1,1)$, the transitions
in 6 steps are $j\rightarrow t$, $t\rightarrow i$ or $j\rightarrow i$. 
In the case of $(2,0)$, the transitions are $t\rightarrow i$ or
$j\rightarrow i$.  In summary, the possible transitions in 6 steps
are $j\rightarrow i$, $j\rightarrow t$, $t\rightarrow i$.

\end{proof}

The final lemma is as follows.
\begin{lem}
\label{lem:5}
Consider a path of length $T=6k{+}1$, i.e., path with $6k$ steps.
Then
\begin{equation}
\label{eq:lem5}
2 x_{ij} + x_{it} + x_{tj} \ge x_{ji}  + 2k{-}1.
\end{equation}
If the equality holds, then the path has to be at $i$ at time $T$.
\end{lem}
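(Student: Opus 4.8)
The plan is to prove both the inequality and the equality clause together by induction on $k$, peeling off the last six transitions at each stage and combining the inductive hypothesis with Lemma~\ref{lem:3a}. It is convenient to abbreviate $D := 2x_{ij}+x_{it}+x_{tj}-x_{ji}$ for the quantity in question, evaluated on whatever block of consecutive transitions is under discussion; since $D$ is linear in the transition counts, it is additive over a concatenation of blocks. For the base case $k=1$ the path has $T=7$, i.e.\ exactly six steps, and the assertion $D\ge 1$ together with ``equality forces the path to end at $i$'' is exactly Lemma~\ref{lem:3a} (which in fact yields the stronger conclusion that the path must also start at $j$).

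For the inductive step, assume the statement for $k$ and take a path $\ve w=s_1\cdots s_{6k+7}$ with $6k+6$ steps. Write $\ve w^0=s_1\cdots s_{6k+1}$ for its first $6k$ steps and $\ve w^1=s_{6k+1}\cdots s_{6k+7}$ for its last six, so that $D(\ve w)=D(\ve w^0)+D(\ve w^1)$. The inductive hypothesis gives $D(\ve w^0)\ge 2k-1$ and Lemma~\ref{lem:3a} gives $D(\ve w^1)\ge 1$; adding these only yields $D(\ve w)\ge 2k$, one short of the target $2(k{+}1){-}1=2k+1$. The missing unit is recovered from the incompatibility of the two minima: if $D(\ve w^0)=2k-1$, then by the inductive equality clause $\ve w^0$ ends at $i$, so $\ve w^1$ begins at $i\neq j$; but by Lemma~\ref{lem:3a} a six-step block with $D=1$ must begin at $j$, forcing $D(\ve w^1)\ge 2$. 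Thus either $D(\ve w^0)\ge 2k$ or $D(\ve w^1)\ge 2$, and in both cases $D(\ve w)\ge 2k+1$, proving the inequality for $k+1$.

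For the equality clause at $k+1$, suppose $D(\ve w)=2k+1$. If $D(\ve w^0)\ge 2k$, then necessarily $D(\ve w^0)=2k$ and $D(\ve w^1)=1$, so Lemma~\ref{lem:3a} makes $\ve w^1$ end at $i$ and hence the whole path ends at $i$. Otherwise $D(\ve w^0)=2k-1$, so $s_{6k+1}=i$ and $D(\ve w^1)=2$ with $\ve w^1$ a six-step block starting at $i$; I must show such a block ends at $i$. Decomposing $\ve w^1$ into its two triples and invoking Lemma~\ref{lem:3}, the two triple-defects sum to $2$: a triple starting at $i$ cannot have defect $0$ (that would force it to start at $j$), and the split $(2,0)$ is excluded because it would require a first triple realizing a net $i\to j$ transition with defect $2$, which Lemma~\ref{lem:3} forbids; hence the split is $(1,1)$ with both triples net self-loops $i\to i$, so $\ve w^1$ ends at $i$. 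In either case the path is at $i$ at time $T$, completing the induction.

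The step I expect to be the main obstacle is precisely this last one. Lemma~\ref{lem:3a}'s explicit description of the ``difference-one'' ($D=2$) configurations omits the all-self-loop case, so it cannot be quoted verbatim here; instead the needed fact must be reestablished directly from Lemma~\ref{lem:3} by tracking the start and end vertices of each triple through its case lists for defects $0$, $1$, and $2$. The delicate bookkeeping is to confirm that a six-step block pinned to start at $i$ and having defect exactly $2$ is forced into the net transition $i\to i$, and therefore returns to $i$.
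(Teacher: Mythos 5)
Your proof is correct, but it takes a genuinely different route from the paper's. The paper does not induct: it cuts the $6k$ steps into $k$ blocks of six and runs a global compensation argument. Writing $D := 2x_{ij}+x_{it}+x_{tj}-x_{ji}$ as you do, a block achieving equality in Lemma~\ref{lem:3a} (i.e.\ $D=1$) forces a net move $j\to i$; before another such block can occur the path must leave state $i$, which requires some intermediate block of net type $i\to t$ or $i\to j$, and every such block has $D\ge 3$. Pairing each $D=1$ block with a later $D\ge 3$ block --- except possibly one final unmatched minimal block --- yields the bound $2k-1$, and equality forces that unmatched block, hence the state $i$, to occur at the end. Your induction, peeling off the last six steps and carrying the equality clause in the hypothesis, replaces this amortization with a local two-block analysis; the key exchange (if the prefix is minimal it ends at $i$, so the suffix cannot be minimal) is sound, and the additivity of $D$ over concatenation is exactly what makes it work. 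Two further remarks. First, your criticism of Lemma~\ref{lem:3a} is correct: its list of $D=2$ configurations wrongly omits net self-loops, because its proof overlooks that in the defect split $(1,1)$ two self-loop triples compose to a net self-loop; the block $itjitji$ is a concrete counterexample, a net self-loop at $i$ with $D=2$. Consequently your Case B is not vacuous (the $12$-step path $jijitjitjitji$ attains equality $D=3$ for $k=2$ through Case B), and your re-derivation from Lemma~\ref{lem:3} is exactly the needed patch. Second, the same patch is implicitly needed by the paper itself: its inequality argument only uses that blocks leaving $i$ have $D\ge 3$, which survives the correction, but its one-line claim that ``the condition for equality also follows'' must classify the $D=2$ self-loop blocks that can sit after the last minimal block --- precisely the analysis you carried out. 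So the paper's proof is shorter but terse where yours is explicit; yours costs a strengthened induction hypothesis but makes the equality case airtight and exposes the slip in Lemma~\ref{lem:3a}.
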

\begin{proof}
We divide a path into $k$ subpaths of length 6.
Suppose that there exists a block for which
\begin{equation}
\label{eq:deficit}
2 \bar x_{ij} + \bar x_{it} + \bar x_{tj} = \bar x_{ji}  +1.
\end{equation}
In this block the path goes from $j$ to $i$.  Before another
block of this type, the path has to come back to $j$.  But then
there has to be some block of $i\rightarrow t$ or $i\rightarrow j$.
For these blocks
\begin{equation}
\label{eq:profit}
2 \bar x_{ij} + \bar x_{it} + \bar x_{tj} \ge  \bar x_{ji}  +3.
\end{equation}
Therefore, the deficit of 1 in \eqref{eq:deficit} is compensated
by the gain of 1 in \eqref{eq:profit}.  The lemma follows from this
observation.  The condition for equality also follows from this observation.
\end{proof}

\bigskip

Now we will show a facet for the cases $T=6k$ and $T=6k+3$, $k=1,2,\dots$.

\begin{proposition}
\label{prop:11}
For $T=6k{+}3$  \ (with $k\ge 1$)
\[
\ve c = [ 5k{+}2,\, 2k{+}1,\, -4k{-}1,\, -k,\, -k,\, 2k{+}1]
\]
defines a facet of $\cP^{T}$ modulo $\mathfrak{S}_3$.
\end{proposition}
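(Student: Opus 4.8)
The plan is to reduce the whole statement to a single scalar inequality and then split it across Lemma~\ref{lem:5}. Every word of length $T=6k{+}3$ has exactly $T{-}1=6k{+}2$ transitions, so $\sum_{i\neq j}x_{ij}=6k{+}2$ on all columns $\ve a^T_{\ve w}$. Matching coordinates shows $\ve c=(3k{+}1)(2,1,-1,0,0,1)-k(1,1,1,1,1,1)$, hence on the hyperplane $\sum_{i\neq j}x_{ij}=6k{+}2$ one has $\ve c\cdot\ve x=(3k{+}1)f(\ve x)-2k(3k{+}1)$, where $f(\ve x):=2x_{12}+x_{13}+x_{32}-x_{21}$. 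Since $3k{+}1>0$, both the non-negativity i) and the description of the equality locus collapse to the single condition $f(\ve x)\ge 2k$, with $\ve c\cdot\ve a^T_{\ve w}=0$ exactly when $f=2k$. This is the $(i,j,t)=(1,2,3)$ quantity of Lemma~\ref{lem:5}, but for $T=6k{+}3$ rather than $6k{+}1$.

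For non-negativity I would split a length-$(6k{+}3)$ path as $\ve w=\ve w^0\ve w^1$, where $\ve w^0$ carries the first $6k$ transitions (a path of length $6k{+}1$) and $\ve w^1$ the last two; since transition counts are additive, $f(\ve w)=f(\ve w^0)+f(\ve w^1)$. Lemma~\ref{lem:5} gives $f(\ve w^0)\ge 2k{-}1$, and inspecting the twelve two-step words shows $f(\ve w^1)\ge 0$ always. If $f(\ve w^0)\ge 2k$ we are done. The only remaining case is $f(\ve w^0)=2k{-}1$, but then Lemma~\ref{lem:5} forces $\ve w^0$ to end at vertex $1$, so $\ve w^1$ starts at $1$; the four two-step words out of vertex $1$ (namely $121,123,131,132$) each satisfy $f(\ve w^1)\ge 1$. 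Hence $f(\ve w)\ge 2k$ in every case.

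For the dimensionality ii) I would exhibit five equality words whose sufficient statistics are linearly independent. Reverse $3$-cycles $2\to1\to3\to2$ (resp.\ $3\to2\to1\to3$) give length-$(6k{+}1)$ prefixes with $f=2k$ ending at $2$ (resp.\ $3$); appending a two-step word of $f$-value $0$ yields the three statistics $[0,2k,2k,1,1,2k]$, $[0,2k{+}1,2k{+}1,0,0,2k]$ and $[0,2k,2k{+}1,0,0,2k{+}1]$. These all have $x_{12}=0$, so they span only a four-dimensional space, and the crux is to produce an equality word with $x_{12}>0$. For this I would realize the genuine minimum of Lemma~\ref{lem:5}: the multigraph with edge multiplicities $x_{13}=x_{21}=2k$, $x_{31}=1$, $x_{32}=2k{-}1$ (and $x_{12}=x_{23}=0$) has an Eulerian path from $2$ to $1$ of length $6k{+}1$ attaining $f=2k{-}1$; appending $131$ or $121$ gives the two further statistics $[0,2k{+}1,2k,0,2,2k{-}1]$ and $[1,2k,2k{+}1,0,1,2k{-}1]$. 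A triangular elimination (peel off the unique vector with $x_{12}\neq0$, then the unique one with $x_{23}\neq0$, then the one with $x_{31}\neq0$, leaving two manifestly non-proportional vectors) shows these five statistics are linearly independent for every $k\ge 1$.

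The main obstacle is exactly this fifth vector. Every cheaply built equality path runs along the reverse cycle and never uses the edge $1\to2$, so it lies in $\{x_{12}=0\}$ and the five vectors fail to span; obtaining $x_{12}>0$ while keeping $f=2k$ is possible only by first descending to the strict minimum $f=2k{-}1$ of Lemma~\ref{lem:5} (which is precisely where its end-vertex constraint is used) and then paying the extra $+1$ with a $121$ segment. The one point needing genuine care is checking that the prescribed degree sequence is truly Eulerian, i.e.\ connected with excess $+1$ at the start and $-1$ at the end; once that is confirmed, the independence check is routine and entirely in the spirit of Propositions~\ref{prop:3} and \ref{prop:4}.
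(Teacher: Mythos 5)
Your proposal is correct and follows essentially the same route as the paper's proof: the same reduction of $\ve c\cdot\ve x\ge 0$ on the hyperplane $\sum_{i\neq j}x_{ij}=6k{+}2$ to the scalar inequality $2x_{12}+x_{13}+x_{32}\ge x_{21}+2k$, the same split of a word into its first $6k$ transitions plus two final ones with Lemma~\ref{lem:5} applied to the prefix and a short case analysis (using the end-at-vertex-$1$ condition in the equality case) for the suffix, and the same strategy of exhibiting five explicit equality paths with linearly independent sufficient statistics. The only deviation is in two of the five equality vectors --- you build them by realizing the Lemma~\ref{lem:5} minimum as an Eulerian path and appending a $121$ or $131$ suffix, where the paper instead writes down the explicit words $213232321\cdots21$ and $2132\cdots3213212121$ --- but this is a cosmetic difference, not a different method.
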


\begin{proof}
For non-negativity, consider
\begin{equation}
\label{eq:22}
(5k{+}2)x_{12} + (2k{+}1)x_{13} + (2k{+}1) x_{32} \ge (4k{+}1) x_{21} + k x_{23} + k x_{31}.
\end{equation}
The RHS is written as
\[
(3k{+}1) x_{21} + k( x_{21} + x_{23} + x_{31})=
(3k{+}1) x_{21} + k (6k{+}2 - x_{12} - x_{13} - x_{32}).
\]
Hence \eqref{eq:22} is equivalent to
\[
(6k{+}2) x_{12} + (3k{+}1) x_{13} + (3k{+}1) x_{32} \ge (3k{+}1) x_{21} +  2k (3k{+}1).
\]
Dividing by $3k{+}1>0$, this is equivalent to
\begin{equation}
\label{eq:6k3}
2 x_{12} + x_{13} + x_{32} \ge  x_{21} +  2k .
\end{equation}
A path has $T{-}1=6k{+}2$ steps.
Consider the first $6k$ steps divided into triples of steps and apply  
Lemma \ref{lem:5} to the LHS.  
If the inequality in 
\eqref{eq:lem5} is strict, then we only need to check
that 
\[
2 x_{12} + x_{13} + x_{32} \ge  x_{21}
\]
for the remaining two steps.  This is obvious, because a transition
$21$ has to be preceded or followed by some term on the left-hand side.

If equality holds in \eqref{eq:lem5}, at time $6k{+}1$,
the path is at state $1$ and then the penultimate step
is either $12$ or $13$.  In either case we easily see that 
\[
2 x_{12} + x_{13} + x_{32} >  x_{21}.
\]
This prove the non-negativity.

For dimensionality, consider the following $5$ paths.
\begin{align*}
23132132132132\ldots 132132,\,
 321321\ldots 321,\,
213232321321321\ldots21,  \\
213213213\ldots213,\,
21321321321\ldots3213212121.
\end{align*}
The sufficient statistics for these paths are
\begin{align*}
[0,2k, 2k, 1, 1, 2k],\,
[0, 2k, 2k{+}1, 0, 0, 2k{+}1],\,
[0,2k{-}1,2k,2,0,2k{+}1]\\
[0,2k{+}1,2k{+}1,0,0,2k],\,
[2,2k{-}1,2k{+}2,0,0,2k{-}1].\\
\end{align*}
These are linearly independent for $k\ge 1$.
\end{proof}

\bigskip
Now we consider $T=6k$.
\begin{proposition}
\label{prop:22}
For $T=6k$ \ (with $k\ge 1$)
\[
\ve c = [10k{-}1,\, 4k,\, -8k{+}2,\, -2k{+}1,\, -2k{+}1,\, 4k]
\]
defines a facet of $\cP^{T}$ modulo $\mathfrak{S}_3$.
\end{proposition}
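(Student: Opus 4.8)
The plan is to verify the two facet conditions stated at the beginning of this section---non-negativity and dimensionality---for the cone $C^T$, and then to transfer the conclusion to $\cP^T$ via Proposition~\ref{prop:conehype}. First I would put the non-negativity inequality $\ve c\cdot\ve a^T_{\ve w}\ge 0$ into a clean normal form, exactly as in Propositions~\ref{prop:1} and~\ref{prop:11}. Writing it out gives
\[
(10k{-}1)x_{12}+4k\,x_{13}+4k\,x_{32}\ge (8k{-}2)x_{21}+(2k{-}1)(x_{23}+x_{31}),
\]
and substituting the column-sum relation $x_{21}+x_{23}+x_{31}=6k{-}1-(x_{12}+x_{13}+x_{32})$ (valid since a word of length $T=6k$ has $6k{-}1$ transitions) lets me collect the common factor $6k{-}1$ on both sides. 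After dividing by $6k{-}1>0$ everything collapses to the single inequality
\[
2x_{12}+x_{13}+x_{32}\ge x_{21}+2k{-}1,
\]
which is precisely the bound of Lemma~\ref{lem:5} with $(i,j,t)=(1,2,3)$---except that our path has $6k{-}1$ transitions, one fewer than the $6k$ transitions assumed there.

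The heart of the argument is therefore to establish this inequality for a path that is one step too short to feed directly into Lemma~\ref{lem:5}. I would split the $6k{-}1$ transitions into an initial block of $6(k{-}1)$ transitions and a final tail of $5$ transitions. Applying Lemma~\ref{lem:5} to the initial block (with $k$ replaced by $k{-}1$) yields $2x_{12}+x_{13}+x_{32}\ge x_{21}+2k{-}3$ on that block, so it remains to show that the tail supplies an additional $2$ in the quantity $2x_{12}+x_{13}+x_{32}-x_{21}$. Writing $\text{slack}$ for the excess of the initial block beyond $2k{-}3$, the goal is $\text{slack}+(\text{tail value})\ge 2$. When the block bound is tight ($\text{slack}=0$), Lemma~\ref{lem:5} forces the path to be at state $1$ at the end of the initial block, which pins the starting vertex of the five-step tail; a short case analysis over the five-step paths leaving state $1$ (in the spirit of Lemmas~\ref{lem:2} and~\ref{lem:3}) then shows the tail value is at least $2$. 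When the tail value is small one must instead show the slack is correspondingly large, so the two quantities are coupled. I expect this boundary bookkeeping---reconciling the off-by-one in the step count with the endpoint condition of Lemma~\ref{lem:5}---to be the main obstacle, since it is the one place where the $T=6k$ case genuinely differs from the cleanly divisible situations handled earlier.

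For dimensionality I would exhibit five words of length $6k$ whose transition counts attain equality in $2x_{12}+x_{13}+x_{32}=x_{21}+2k{-}1$ and whose sufficient-statistics vectors span a $5$-dimensional space, mirroring the explicit constructions in Propositions~\ref{prop:11} and~\ref{prop:4}. Natural candidates are nearly periodic paths assembled from the blocks $321$, $213$, and $132$ (the three-step patterns forcing equality in Lemma~\ref{lem:3}) and terminating at state $1$, each perturbed slightly at one end to break degeneracies. Since every such vector lies on the hyperplane $H_{6k-1}$ and hence has constant coordinate sum $6k{-}1$, I would streamline the independence check by subtracting a common non-negative vector and invoking Lemma~\ref{lem:1}, reducing the verification to a small explicit determinant that is easily seen to be nonzero for all $k\ge 1$. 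Establishing these two conditions, together with Proposition~\ref{prop:conehype}, completes the proof that $\ve c$ defines a facet of $\cP^T$ modulo $\mathfrak{S}_3$.
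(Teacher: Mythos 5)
Your proposal is correct and takes essentially the same route as the paper: the same substitution of the column-sum relation reduces non-negativity to $2x_{12}+x_{13}+x_{32}\ge x_{21}+2k{-}1$, and your split into an initial block of $6(k{-}1)$ steps handled by Lemma~\ref{lem:5} plus a $5$-step tail coupled through the endpoint condition is exactly what the paper means when it says the rest is ``similar to Proposition~\ref{prop:11}''. Your dimensionality plan---five explicit equality paths assembled from $321$/$213$-type blocks with small perturbations, followed by a linear-independence check---likewise mirrors the paper's proof, which simply exhibits five such paths and their sufficient statistics.
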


\begin{proof} 
For non-negativity, consider 
\begin{equation}
\label{eq:T6k}
(10k{-}1) x_{12} + 4k x_{13} + 4k x_{32} \ge (8k{-}2) x_{21} + (2k{-}1) x_{23} + (2k{-}1) x_{31}.
\end{equation}
The RHS is written as
\[
(6k{-}13) x_{21} + (2k{-}1)(6k{-}1 - x_{12} - x_{13} - x_{32})
\]
Hence \eqref{eq:T6k} is equivalent to
\[
(12k{-}2) x_{12} + (6k{-}1) x_{13} + (6k{-}1) x_{32} \ge (6k{-}1) x_{21} + (2k{-}1)(6k{-}1)
\]
or 
\[
2 x_{12} +  x_{13} + x_{32} \ge x_{21} + (2k{-}1).
\]
The rest of the proof is similar to that of Proposition \ref{prop:11}.

For dimensionality, consider the following $5$ paths.
\begin{align*}
232321321321\ldots 321,\,
213231321321\ldots 321,\,
321321321321\ldots 321,\, \\
213213\ldots213, \,
212321321321\ldots321.
\end{align*}
The sufficient statistics for these paths are
\begin{align*}
[0,2k{-}2, 2k{-}1, 2, 0, 2k],\,
[0, 2k{-}1, 2k{-}1, 1, 1, 2k{-}1],\,
[0,2k{-}1,2k,0,0,2k]\\
[0,2k,2k,0,0,2k{-}1],\,
[1,2k{-}2,2k,1,0,2k{-}1].\\
\end{align*}
These are linearly independent for $k\ge 1$.
\end{proof}

\subsubsection{Summary of facets}

Here we summarize all the inequalities that define the facets of the cone $C^T$ thus, of the polytope $P^T$ as well, for all $T\ge 5$. 
We only present one of the six vectors defining the facets, with the understanding that any permutation of the labels $\{1,2,3\}$ leads to another facet inequality. 

\begin{table}[htbp]
\caption{Vectors defining facets of $C^T$ (homogeneous inequalities)}
\label{tab:homogeneous}
\begin{center}
\begin{tabular}{|c|c|}\hline
$T$ & $\ve c$ \  in $\ve c \cdot \ve x \geq 0$\\  \hline
all & $[1,0,0,0,0,0]$\\
all &  $[T,T,-(T-2),1,-(T-2),1]$ \\
odd &  $[1,1,-1,-1,1,1]$ \\
even & $[\frac{3}{2}T{-}1, \frac{T}{2}, -\frac{T}{2}+1, -\frac{T}{2}+1, -\frac{T}{2}+1, \frac{T}{2}]$\\
$3k+1$&$[2,-1,-1,-1,2,2]$\\
$3k+2$&$[2k+1,-k,-k,-k,2k+1,2k+1]$\\ 
$6k+3$&$[5k+2, 2k+1, -4k-1, -k, -k, 2k+1]$\\
$6k$&$[10k-1, 4k, -8k+2, -2k+1, -2k+1, 4k]$\\ \hline
\end{tabular}
\end{center}
\end{table}


Notice that the last four inequalities are listed according to the value of $T \mod 3$. 

In this list of facets, some of the vectors $\ve c$ depend on $T$.
However, this vector $\ve c$ defines also a facet inequality for any dilation $n\cP^T$ of the design polytope by Proposition~\ref{prop:conehype}. 
By substituting the equality
$n(T{-}1) = x_{12} + x_{13} + x_{21} + x_{23} + x_{31} + x_{32}$ into the original inequality defined by $\ve c$, we obtain an inequality for the dilated polytope $n\cP^T$ where $\ve c$ does not depend on $T$. For instance, the inequality 
for the second $\ve c=[T,T,-(T{-}2),1,-(T{-}2),1]$ in Table \ref{tab:homogeneous} we have
\begin{eqnarray}
\nonumber &[T,\, T,\, -(T{-}2),\, 1,\, -(T{-}2),\, 1] \cdot \ve x \geq 0\\
\Longleftrightarrow & T(x_{12} +x_{13}) + (x_{23} + x_{32}) \geq
(T{-}2)(x_{21}+x_{31}).\label{inhom}
\end{eqnarray}
Substituting $x_{23} + x_{32} = n(T{-}1) -
(x_{12}+x_{13}+x_{21}+x_{31})$ in the inequality \eqref{inhom}, we
get 
\[
\begin{array}{cc}
&T(x_{12} +x_{13}) + n(T{-}1) -
(x_{12}+x_{13}+x_{21}+x_{31}) \geq
(T{-}2)(x_{21}+x_{31})\\
\Longleftrightarrow & (T{-}1)(x_{12} +x_{13}) - (T{-}1) (x_{21}+x_{31})
\geq -n(T{-}1) \\
\Longleftrightarrow & (x_{12} +x_{13}) -  (x_{21}+x_{31})
\geq -n\\
\Longleftrightarrow & \tilde {\ve c} \cdot \ve x \geq -n,\\
\end{array},
\]
where 
\[
\tilde {\ve c}= [1,1,-1,0,-1,0].
\]
Notice that the last inequality defines a linear form with a nonzero constant term which defines the same supporting hyperplane for $n\cP^T$ as the inequality~\eqref{inhom}.  Also notice that $\tilde {\ve c}$
is proportional to the main order term of $\ve c=[T,T,-(T{-}2),1,-(T{-}2),1]$ as $T\rightarrow\infty$.

We refer to the inequalities listed in Table \ref{tab:homogeneous}
as \emph{homogeneous}, because they define inequalities for the facets of $C^T$ (thus for those of $n\cP^T$) and we call \emph{inhomogenous} to those inequalities for $n\cP^T$ derived by substituting the equality $n(T{-}1) = x_{12} + x_{13} + x_{21} + x_{23} + x_{31} + x_{32}$.
Inhomogeneous inequalities are essential in Section~\ref{normal} for the proof of normality of the semigroup associated with the
design matrix $A^{T}$. Inhomogenous inequalities are of the form
\[
\tilde {\ve c} \cdot \ve x \ge n \, a
\]
for some $n,\, a\in \N$; where $\tilde {\ve c}$ and $a$ depend only on $T \ {\rm mod}\  6$.
In Table \ref{tab:inhomogeneous}, we summarize the inhomogeneous inequalities corresponding with those 
from Table~\ref{tab:homogeneous} above.

\begin{table}[htbp]
\caption{Vectors defining facets of $nP^T$ (inhomogenous inequalities)}
\label{tab:inhomogeneous}
\begin{center}
\begin{tabular}{|c|c|c|}\hline
$T$ & $\tilde {\ve c}$ \  in $\tilde {\ve c} \cdot \ve x \ge n \, a$ & $a$\\ \hline
all & $[1,0,0,0,0,0]$ & $0$\\
all & $[1,1,-1,0,-1,0]$& $-1$\\  
odd & $[1,1,-1,-1,1,1]$ & $0$ \\  
even& $[3,1,-1,-1,-1,1]$ & $-1$ \\
$3k+1$ & $[2,-1,-1,-1,2,2]$ & $0$ \\
$3k+2$ & $[2,-1,-1,-1,2,2]$ & $-1$ \\
$6k+3$ & $[5,2,-4,-1,-1,2]$ & $-2$\\
$6k$ & $[5,2,-4,-1,-1,2]$ & $-2$ \\ \hline
\end{tabular}
\end{center}
\end{table}

\subsection{There are only 24 facets for $T\geq 5$}

In the previous section, we gave 24 facets of the polytope $\cP^{T}$ for every $T\geq 5$. 
Here, we discuss how these 24 facets are enough to describe the polytope $P^{T}$ 
(the convex hull of the columns of $A^{T}$), depending on $T$.


Recall that the columns of $A^T$ are on the following hyperplane
\[
H_{T-1}=\{ (x_{12},\dots,x_{32}) \mid T{-}1=x_{12} + \dots + x_{32}\}.
\]
Then it is clear by Proposition \ref{prop:conehype} that
\[
P^{T} = C^{T} \cap H_{T-1}.
\]
Let ${\mathcal F}_T$ denote the set of 
linear forms of the supporting hyperplanes for the pointed cone $ C^{T}$.
Then the linear forms of the supporting hyperplanes $F$ of $P^{T}$
(within $H_{T-1}$) are of the form $F \subseteq H_{T-1}, F \subseteq {\mathcal
  F}_T$. 


For every $T$, let $\tilde {\mathcal F}_T$ denote the 24 facets prescribed in the previous section, and let ${\mathcal F}_T$ denote the set of all facets of $P^{T}$.
Therefore we have a certain subset $\tilde {\mathcal F}_T\subset {\mathcal F}_T$  and 
we need to show that $\tilde {\mathcal F}_T={\mathcal F}_T$.
Let $\tilde {\mathcal C}_T$ denote the polyhedral cone defined by $\tilde {\mathcal F}_T$.
It follows that $\tilde {\mathcal C}_T \supset {\mathcal C}_T$.
Note that $\tilde {\mathcal F}_T = {\mathcal F}_T$ if and only
if $\tilde {\mathcal C}_T = {\mathcal C}_T$. Also let
\[
\tilde P_T = \tilde {\mathcal C}_T \cap H_{T-1} .
\]
Then $\tilde P_T \supset P^{T}$ and
$\tilde P_T = P^{T}$ if and only if 
$\tilde {\mathcal C}_T = {\mathcal C}_T$.  

The above argument shows that to prove 
$\tilde {\mathcal F}_T = {\mathcal F}_T$ it suffices to show
that
\begin{equation}
\label{eq:24-1}
\tilde P_T \subset P^{T}.
\end{equation}
Let $\tilde V_T$ be the set of vertices of $\tilde P_T$. Then
in order to show 
\eqref{eq:24-1}, it suffices to show that
\[
\tilde V_T\subset P^{T}.
\]
Hence, if we can obtain explicit expressions of the vertices of 
$\tilde V_T$ and can show that each vertex belongs to $P^{T}$, we are done.

In the previous section, we used only the condition $T{-}1=x_{12} + \dots + x_{32}$ to settle the equivalence between the homogeneous and inhomogeneous inequalities defining the 24 the facets of $P^{T}$.
Hence 
the homogeneous and the inhomogeneous inequalities are equivalent
on $H_{T-1}$.  Therefore, for each $r=0,\dots,5$, there exists a polyhedral region
defined by 24 fixed affine half-spaces $\{ \ve x\mid \tilde {\ve c} \cdot \ve x \ge a\}$ of Table \ref{tab:inhomogeneous} (for $n=1$), say $Q^r$,  such that
\[
\tilde {\mathcal P}_T =  Q^r \cap H_{T-1}, \quad   T=6k+r, \, k=1,2,\dots
\]

Since  $Q^r$ is a polyhedral region it  can be 
written as a Minkowski sum of a polytope $P^r$ and a cone $C^r$:
\begin{equation}
\label{eq:Minkowski-sum}
Q^r = P^r + C^r.
\end{equation}
Please note that  $r$  is modulo 6,  but $T$ is not.
Recall the Minkowski sum of two sets $A, B \subseteq \R^d$ is simply $\{\, a + b \mid a \in A,\, b \in B \, \}$.
The six cones and polytopes defining $Q^r$ for $r=0,\ldots,5$ were computed using {\tt Polymake} \cite{polymake} and they are given in the Appendix.
For each  vertex $\ve v$ of
$P^r$ and each  extreme ray $\ve e$ of $C^r$ let $l_{\ve v,\ve e}$ denote
the half-line emanating 
from $\ve v$ in the direction $\ve e$:
\[
l_{\ve v,\ve e}=\{ \ve v + t\ve e \mid  t\ge 0\}
\]
Given the explicit expressions of $v$ and $e$ we can solve
\[
[1,1,1,1,1,1]\cdot  (\ve v+t\ve e)=T{-}1
\]
for $t$ and get
\[
t:=t(T,\ve v,\ve e)=\frac{T{-}1-[1,\dots,1]\ve v}{(1,\dots,1)\ve e}.
\]
Then $v+t(T,\ve v,\ve e)\ve e \in H_{T-1}$.
Note that
\[
\tilde V_T \subset \{\ve v+t(T,\ve v,\ve e)\ve e \mid \ve v:\text{vertex of $P^r$}, 
\ \ve e:\text{extreme ray of $C^r$}\}.
\]
Also clearly
\[
\{v+t(T,\ve v,\ve e)\ve e \mid \ve v:\text{vertex of $P^r$}, 
\ \ve e:\text{extreme ray of $C^r$}\} \subset \tilde P_T={\rm conv}(\tilde V_T).
\]

The above argument shows that for proving $\tilde{\mathcal F}_T = {\mathcal F}_T$
it suffices to show that 
\begin{equation}
\label{eq:24-2}
\{\ve v+t(T,\ve v,\ve e)\ve e \mid \ve v:\text{vertex of $P^r$}, 
\ve e:\text{extreme ray of $C^r$}\} \subset P^{T}.
\end{equation}

For proving \eqref{eq:24-2} the following lemma is useful.
\begin{lem}
\label{lem:rays}
Let $\ve v \in P^r$ and $\ve e \in C^r$. If $\ve v + t(T,\ve v,\ve e)\ve e \in P^{T} \cap \Z^6$ for some $T$,
then $\ve v + t(T + 6k, \ve v,\ve e)\ve e \in P^{T + 6k}$ for all $k \geq 0$.
\end{lem}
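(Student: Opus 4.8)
The statement we must prove, Lemma~\ref{lem:rays}, is the final link needed to establish \eqref{eq:24-2}: it lets us promote a single verified membership $\ve v + t(T,\ve v,\ve e)\ve e \in P^T \cap \Z^6$ for one value of $T$ to membership for the whole arithmetic progression $T, T+6, T+12, \dots$. The plan is to exploit the scaling structure already encoded in Proposition~\ref{prop:conehype} together with the explicit form of $t(T,\ve v,\ve e)$. The crucial computation is to understand how the point $\ve v + t(T,\ve v,\ve e)\ve e$ changes when $T$ is increased by $6k$. Writing $s = (1,\dots,1)\cdot \ve e$ and $q = [1,\dots,1]\cdot \ve v$, we have $t(T+6k,\ve v,\ve e) = t(T,\ve v,\ve e) + \tfrac{6k}{s}$, so that
\[
\ve v + t(T+6k,\ve v,\ve e)\ve e = \bigl(\ve v + t(T,\ve v,\ve e)\ve e\bigr) + \tfrac{6k}{s}\,\ve e.
\]
Thus the new point is the old point (call it $\ve p_T$) plus a nonnegative multiple of the extreme ray $\ve e$.

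**The main argument.** First I would show that the displacement $\tfrac{6k}{s}\ve e$ lies in the cone $C^{T+6k}$ (equivalently, that $\ve e$ scaled appropriately is a lattice point in the saturation), and that $\ve p_T$ itself, being a column-type point of $P^T$, sits in $C^T \subseteq C^{T+6k}$ after suitable reinterpretation. The natural framework is Proposition~\ref{prop:conehype}: since $\ve p_T \in P^T$, we have $\ve p_T \in C^T \cap H_{T-1}$, and the target point $\ve v + t(T+6k,\ve v,\ve e)\ve e$ lies on $H_{T+6k-1}$ by construction of $t$. So it suffices to show this target point lies in the cone $C^{T+6k}$; intersecting with the correct hyperplane then places it in $P^{T+6k}$ by part~(1) of Proposition~\ref{prop:conehype}. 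Because $\ve p_T \in P^T$ and the $\ve c$-defining inequalities of Table~\ref{tab:homogeneous} are the supporting hyperplanes of $C^T$, the point $\ve p_T$ satisfies all 24 homogeneous facet inequalities; since $\ve e$ is an extreme ray of the cone $C^r$ defined by the \emph{inhomogeneous} inequalities of $Q^r$ and these are equivalent to the homogeneous ones on the relevant hyperplanes, adding $\tfrac{6k}{s}\ve e$ preserves all the facet inequalities, keeping the point inside the cone $\tilde{\mathcal C}_{T+6k} = \tilde{\mathcal C}_T$.

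**The delicate point.** The real obstacle is integrality together with actually being a \emph{column} of $A^{T+6k}$, not merely a lattice point of the cone. The hypothesis gives $\ve p_T \in P^T \cap \Z^6 = A^T$ (using Lemma~\ref{lem:integinterior}), so $\ve p_T$ is the transition-count vector of an honest word of length $T$; that word's state graph has an Eulerian path through all $T-1$ edges. The extreme ray $\ve e$ of $C^r$ encodes a cyclic pattern of $6$ transitions, and I would argue that appending $k$ copies of this closed $6$-step block at the appropriate vertex of the Eulerian path produces a valid word of length $T+6k$ whose transition counts are exactly $\ve p_T + \tfrac{6k}{s}\ve e$. Concretely, I expect $s = 6$ (each extreme ray contributing a balanced $6$-step cycle), so $\tfrac{6k}{s}\ve e = k\ve e$ is integral, and by Proposition~\ref{prop:eulpath} the augmented multigraph—still satisfying the in/out-degree balance $|G_{i+}-G_{+i}|\le 1$ because $\ve e$ corresponds to a closed walk with balanced degrees—admits an Eulerian path, hence corresponds to a column of $A^{T+6k}$. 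The hard part is verifying in full generality that the ray $\ve e$ can always be realized as a closed balanced walk compatible with the Eulerian structure of $\ve p_T$; this is where one must use the explicit extreme rays computed by \texttt{Polymake} and the degree-balance guarantees of Propositions~\ref{prop:bnddeg} and~\ref{prop:eulpath} rather than a purely formal cone argument.
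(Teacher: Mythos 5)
Your closing paragraph (``the delicate point'') is, in substance, the paper's own proof: by Lemma~\ref{lem:integinterior} the hypothesis identifies $\ve p_T=\ve v + t(T,\ve v,\ve e)\ve e$ with an honest word of length $T$; the displacement to the new point is an integer multiple of a loop; appending that many copies of the loop preserves the degree balance $|x_{i+}-x_{+i}|\le 1$ (a cycle changes no net degree), creates no self-loops, and connectivity is automatic on three vertices, so Proposition~\ref{prop:eulpath} gives an Eulerian path, i.e., a word of length $T+6k$, i.e., a column of $A^{T+6k}$, hence a point of $P^{T+6k}$. One factual correction: your guess $s=6$ is wrong. By Proposition~\ref{prop:explicit-rays} the extreme rays of $C^r$ are the two-loops $121,131,232$ (so $s=2$) and the three-loops $1231,1321$ (so $s=3$); the displacement $\frac{6k}{s}\ve e$ is then $3k\ve e$ or $2k\ve e$, still integral, and the paper's proof proceeds exactly this way, appending $3k$ copies of a two-loop or $2k$ copies of a three-loop. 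Because the rays are these explicitly known cycles, the ``hard part'' you defer (realizing $\ve e$ as a closed balanced walk) is immediate, so nothing beyond your last paragraph is actually needed.

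Your ``main argument'' paragraph, however, should be discarded: in this context it is circular. Showing that $\ve p_T+\frac{6k}{s}\ve e$ satisfies the 24 inequalities only places it in $\tilde{\mathcal C}_{T+6k}$, the cone cut out by those inequalities; whether that cone equals $C^{T+6k}=\cone(A^{T+6k})$ is precisely what Theorem~\ref{thm:facets}, via Lemma~\ref{lem:rays}, is in the process of proving, so it cannot be invoked. Moreover the inclusion $C^{T}\subseteq C^{T+6k}$ you assert is false: the $T$-dependent facet vectors tighten as $T$ grows. For instance the column of $A^{6}$ for the word $212121$ has $x_{12}=2$, $x_{21}=3$ and satisfies the $T=6$ inequality $6(x_{12}+x_{13})+x_{23}+x_{32}\ge 4(x_{21}+x_{31})$ with equality ($12\ge 12$), but violates the corresponding $T=12$ inequality ($24\ge 30$ fails); if anything the inclusion runs the other way. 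Fortunately the lemma asks for membership in $P^{T+6k}=\conv(A^{T+6k})$ directly, and the path-extension argument of your last paragraph delivers exactly that with no cone reasoning at all.
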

\begin{proof}
If $\ve x := \ve v + t(T,\ve v,\ve e)\ve e \in P^{T} \cap \Z^{6}$ for some $T$ then $\ve x$ corresponds
to a path of length $T$ on three states with no loops (word in $\wordsnlt$). Suppose $\ve e$ is a two-loop (three-loop) e.g.\ 121 (1231).
Then $\ve x + (3k)\ve e \in P^{T + 6k}$ ($\ve x + (2k)\ve e \in P^{T + 6k}$). That is, since $\ve x$ is an integer
point (a path) contained in $P^{T}$, we can simply add three (or two depending on the loop) copies of
the loop $\ve e$ and we will be guaranteed to have a path of the correct length meaning it will
be contained in $P^{T+6}$.
\end{proof}

By this lemma we need to compute $C^r$ only for some special small $T$'s.  
We computed all vertices and all rays for the cases $T=12,7,20,9,16,11$. 
The software to generate the design matrices can be found at 
\url{https://github.com/dchaws/GenWordsTrans}
and  the design matrices and some other material can be found at
\url{http://www.davidhaws.org/THMC.html}.  By our computational result and Lemma 
\ref{lem:rays} we verified the following proposition.

\begin{proposition}
\label{prop:explicit-rays}
The rays of the cones $C^r$ for $r=0,\ldots,5$ are 
$[1,0,1,0,0,0]$, $[1,0,0,1,1,0]$, $[0,1,1,0,0,1]$, $[0,1,0,0,1,0]$, $[0,0,0,1,0,1]$. In terms of
the state graph, the rays correspond to the five loops 121, 131, 232, 1231, and 1321.
\end{proposition}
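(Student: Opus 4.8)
The plan is to identify $C^r$, the recession cone appearing in the Minkowski decomposition~\eqref{eq:Minkowski-sum}, with the cone of circulations on the complete loopless digraph on $\{1,2,3\}$, whose extreme rays are classically the simple directed cycles. Since $Q^r$ is the polyhedron cut out by the affine half-spaces $\tilde{\ve c}\cdot\ve x\ge a$ of Table~\ref{tab:inhomogeneous}, its recession cone is $C^r=\{\ve d \mid \tilde{\ve c}\cdot\ve d\ge 0 \text{ for every facet normal } \tilde{\ve c}\}$, independent of the constants $a$. The whole argument then takes place at the level of these homogeneous inequalities.

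First I would isolate the two families of normals that are present for \emph{every} residue $r$, namely the ones labelled ``all'' in Table~\ref{tab:inhomogeneous}. The $\mathfrak S_3$-orbit of $[1,0,0,0,0,0]$ yields $x_{ij}\ge 0$ for all $i\neq j$, so $C^r\subseteq\R^6_{\ge 0}$. The $\mathfrak S_3$-orbit of $[1,1,-1,0,-1,0]$ yields $x_{i+}-x_{+i}\ge 0$ for $i=1,2,3$; since $\sum_i(x_{i+}-x_{+i})=0$ holds identically, these three inequalities \emph{force} $x_{i+}=x_{+i}$ for every $i$. Hence $C^r$ is contained in the circulation cone $\mathcal E=\{\ve x\in\R^6_{\ge 0}\mid x_{i+}=x_{+i},\ i=1,2,3\}$, and this inclusion is uniform in $r$. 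This observation is, I expect, the crux of the whole proof: once the two ever-present inequalities pin $C^r$ down to $\mathcal E$, the residue $r$ becomes almost irrelevant, which is exactly why the ray set is the same for all $r$.

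Next I would invoke the standard flow-theoretic description of $\mathcal E$: every nonnegative circulation decomposes as a nonnegative combination of indicator vectors of simple directed cycles, and each such cycle is an extreme ray. On the loopless digraph on three vertices the simple directed cycles are precisely the three digons and the two triangles, that is, the loops $121,131,232,1231,1321$, whose indicator vectors are the five claimed rays. Thus $\mathcal E=\cone\{121,131,232,1231,1321\}$, with these five as its extreme rays; note the single linear relation $121+131+232=1231+1321=[1,1,1,1,1,1]$, so $\mathcal E$ is $4$-dimensional and the five rays are genuinely extremal.

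For the reverse inclusion $\mathcal E\subseteq C^r$ it suffices, by the decomposition just quoted, to verify $\tilde{\ve c}\cdot\ve e\ge 0$ for each of the five loops $\ve e$ and each facet normal $\tilde{\ve c}$, a finite check. The key simplification is that both the five-loop set and the set of facet normals are $\mathfrak S_3$-invariant, so it is enough to test one representative $\tilde{\ve c}$ from each of the eight rows of Table~\ref{tab:inhomogeneous} against all five loops; these forty scalar inequalities all hold (e.g.\ the row $[5,2,-4,-1,-1,2]$ gives values $1,1,1,3,0$ on the five loops). Combining the two inclusions gives $C^r=\mathcal E$ for every $r=0,\dots,5$, so the extreme rays of each $C^r$ are exactly the five loops, in agreement with the direct \texttt{Polymake} computation for $T=12,7,20,9,16,11$ and Lemma~\ref{lem:rays}. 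The only non-formal ingredient is the cycle decomposition of $\mathcal E$, which is entirely classical; everything else is the balance-forcing trick of the second paragraph plus routine bookkeeping.
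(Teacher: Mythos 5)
Your proof is correct, but it takes a genuinely different route: the paper gives no mathematical argument for this proposition at all. It obtains the rays by direct computation, feeding the affine half-spaces of Table~\ref{tab:inhomogeneous} (with $n=1$) into \texttt{Polymake} for one representative $T$ in each residue class ($T=12,7,20,9,16,11$), reading off the V-representation $Q^r=P^r+C^r$, and recording the rays; the same computation also produces the vertex lists of the $P^r$ in the Appendix, which the paper needs later (e.g.\ the $L_1$-bound $17n$ in the proof of Theorem~\ref{thm:normality}). Your argument replaces this computation by a conceptual one: the recession cone of $Q^r$ is cut out by the homogenized inequalities alone; the two families present for every residue already force $C^r\subseteq\{\ve x\ge 0 \mid x_{i+}=x_{+i},\ i=1,2,3\}$, since the three balance inequalities $x_{i+}-x_{+i}\ge 0$ sum identically to zero and hence must be equalities; the classical cycle decomposition of nonnegative circulations identifies that set as the cone on the three digons and two triangles, each extremal; and the reverse inclusion is a finite, $\mathfrak{S}_3$-reduced check of the remaining rows of Table~\ref{tab:inhomogeneous} against the five cycles. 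I spot-checked your forty inner products (e.g.\ $[5,2,-4,-1,-1,2]$ gives $1,1,1,3,0$ on $121,131,232,1231,1321$, and the balance normals vanish on all five circulations); they hold, and the symmetry reduction $(\sigma\cdot\tilde{\ve c})\cdot\ve e=\tilde{\ve c}\cdot(\sigma^{-1}\cdot\ve e)$ is legitimate because the set of five cycles is $\mathfrak{S}_3$-stable. Your route buys three things the paper's does not: a software-independent proof, an explanation of \emph{why} $C^r$ is the same cone for all six residues (only the two $T$-independent families matter; the residue-dependent inequalities are slack on circulations), and an argument that would scale to $S\ge 4$, where the recession cone is again a circulation cone with simple directed cycles as extreme rays. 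What it does not provide is the vertex data of the polytopes $P^r$, which the paper's computation supplies and which is used in the normality proof; so your argument can replace the paper's verification of this proposition, but not the computation wholesale.
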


Note that $C^r$, $r=0,\ldots,5$ are common and we denote them as $C$ hereafter.
Also note that the rays of the cone $C^r$ are very  simple.
Proposition \ref{prop:explicit-rays} implies the following theorem.
\begin{thm} \label{thm:facets}
The 24 facets given in Propositions \ref{prop:1}, \ref{prop:2}, \ref{prop:3}, \ref{prop:4}, \ref{prop:5}, \ref{prop:11}, \ref{prop:22} (depending on $T \mod 6$) are all the facets
of $P^{T} = \conv(A^{T})$ for $T\ge 5$.
\end{thm}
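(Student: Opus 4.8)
The plan is to complete the reduction carried out just before the statement. That argument shows that proving $\tilde{\mathcal F}_T = {\mathcal F}_T$ is equivalent to verifying the inclusion \eqref{eq:24-2}, namely that every point of the form $\ve v + t(T,\ve v,\ve e)\ve e$ --- with $\ve v$ a vertex of $P^r$ and $\ve e$ an extreme ray of $C^r$ --- lies in $\cP^{T}$. Since $\tilde V_T$ is contained in this family, establishing \eqref{eq:24-2} for every $T\ge 5$ forces $\tilde P_T \subset \cP^{T}$, hence $\tilde{\mathcal C}_T = {\mathcal C}_T$, and therefore $\tilde{\mathcal F}_T = {\mathcal F}_T$. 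So the whole proof reduces to checking \eqref{eq:24-2}.

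The engine that turns this into a finite computation is Lemma \ref{lem:rays} together with Proposition \ref{prop:explicit-rays}. By Proposition \ref{prop:explicit-rays}, every extreme ray $\ve e$ of $C^r$ (the common cone $C$) is one of the five loops $121,131,232,1231,1321$; these are exactly the directions along which adding copies preserves the property of being a word. Thus Lemma \ref{lem:rays} says: once a single candidate point $\ve v + t(T_0,\ve v,\ve e)\ve e$ is shown to be an integral point of $\cP^{T_0}$, the whole progression $\ve v + t(T_0+6k,\ve v,\ve e)\ve e$ stays inside $\cP^{T_0+6k}$ for all $k\ge 0$, because over each increment of $6$ in $T$ one simply appends an integer multiple of the loop (three copies of a $2$-loop or two copies of a $3$-loop). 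Hence it suffices to verify \eqref{eq:24-2} at one representative $T_0$ in each residue class $r = T \bmod 6$; this is precisely why Proposition \ref{prop:explicit-rays} implies the theorem.

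Concretely, I would fix the six base values $T_0 \in \{12,7,20,9,16,11\}$ (one per residue) at which the vertices of $P^r$ and the rays of $C^r$ are tabulated in the Appendix. For each pair $(\ve v,\ve e)$ I would compute $t(T_0,\ve v,\ve e) = (T_0{-}1-[1,\dots,1]\ve v)/([1,\dots,1]\ve e)$ and the resulting point $\ve x_0 = \ve v + t(T_0,\ve v,\ve e)\ve e$, then check that $\ve x_0$ is a non-negative integer vector lying on $H_{T_0-1}$ and satisfying $|x_{i+}-x_{+i}|\le 1$. Such a vector defines a three-vertex multigraph admitting an Eulerian path by Proposition \ref{prop:eulpath}, hence corresponds to a word in $\wordsnlt$ and is a column of $A^{T_0}$; in particular $\ve x_0 \in \cP^{T_0}$ (equivalently, Lemma \ref{lem:integinterior} identifies $\cP^{T_0}\cap\Z^6$ with the columns of $A^{T_0}$). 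Invoking Lemma \ref{lem:rays} then promotes each $\ve x_0$ to a point of $\cP^{T}$ for all $T\equiv r \pmod 6$, $T\ge T_0$, and the finitely many residual values $5\le T < T_0(r)$ are checked directly with \texttt{Polymake}.

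The main obstacle is the base-case verification itself: one must confirm that every candidate vertex produced by the Minkowski decomposition $Q^r = P^r + C^r$ is genuinely integral and realizable as an Eulerian path at the representative $T_0$. The conceptual content is already isolated in Proposition \ref{prop:explicit-rays} (all rays are loops) and Lemma \ref{lem:rays} (loops extend words); what remains is bookkeeping over the explicit vertex and ray lists. One subtlety worth flagging is integrality: the fractional part of $t(T,\ve v,\ve e)$ is constant along each residue class, since it changes by $6/([1,\dots,1]\ve e)\in\{2,3\}$ between consecutive admissible $T$, so integrality at $T_0$ propagates automatically along the class --- which is exactly what licenses the single-representative reduction.
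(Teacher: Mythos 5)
Your proposal reproduces the paper's own argument: the same reduction to \eqref{eq:24-2}, the same six representatives $T_0\in\{12,7,20,9,16,11\}$, the same use of Lemma \ref{lem:rays} and Proposition \ref{prop:explicit-rays} to propagate along each residue class modulo $6$, and direct {\tt Polymake} computation for the leftover small values of $T$. So structurally this is the paper's proof.

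There is, however, one step that fails if executed literally. You claim that for \emph{every} pair $(\ve v,\ve e)$ the candidate point $\ve x_0=\ve v+t(T_0,\ve v,\ve e)\ve e$ is a non-negative \emph{integer} vector, which you then realize as an Eulerian path. This is not true: the polytopes $P^r$ have fractional vertices. For instance $\ve v=[0,1,\tfrac12,\tfrac12,\tfrac12,\tfrac12]\in\vertices(Q^1)$; since every ray $\ve e$ is a $0/1$-vector whose support misses at least one fractional coordinate of $\ve v$, every candidate point built from this $\ve v$ is fractional (e.g.\ with $\ve e$ the loop $121$ and $T_0=7$ one gets $t=\tfrac32$ and $\ve x_0=[\tfrac32,\tfrac52,\tfrac12,\tfrac12,\tfrac12,\tfrac12]$). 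For such points Proposition \ref{prop:eulpath} says nothing, and Lemma \ref{lem:rays}, whose hypothesis is $\ve x_0\in P^{T}\cap\Z^6$, does not apply; so your verification is incomplete exactly where \eqref{eq:24-2} quantifies over all pairs. The repair, left implicit in the paper as well, is to recall that the reduction only needs $\tilde V_T\subset P^{T}$, not all of \eqref{eq:24-2}: once $T-1$ exceeds the $L_1$-norm of every vertex of $Q^r$, the vertices of $\tilde P_T$ are precisely the intersections of $H_{T-1}$ with the \emph{unbounded edges} of $Q^r$, and once the base-case computation confirms $\tilde P_{T_0}=P^{T_0}$, these intersection points are vertices of $P^{T_0}$, hence columns of $A^{T_0}$, hence integral, so Lemma \ref{lem:rays} applies to them --- and it is needed for no other points. (Alternatively, one can strengthen Lemma \ref{lem:rays} to arbitrary, possibly fractional, points of $P^{T}$: write such a point as a convex combination of columns of $A^{T}$ and append the loop copies to each path in the combination.) Either observation closes the gap; without one of them the fractional pairs are stranded.
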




\section{Normality of the semigroup}\label{normal}

From the definition of normality of a semigroup in Section \ref{facets}, the semigroup $\N A^T$ defined by the design matrix is normal if it coincides with the elements in both, the integer lattice $\Z A^T$ and the cone $C^T$. 

In this section, we provide an inductive proof of the normality of the semigroup $\N A^T$ for arbitrary $T\ge 3$. We verified the normality for the first cases by computer. 

\begin{lem}\label{normal1}
The semigroup $\N A^{T}$ is normal for $3 \leq T \leq
135$. 
\end{lem}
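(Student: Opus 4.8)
The plan is to treat Lemma~\ref{normal1} as a finite but carefully organized computation, exploiting the fact that although $A^T$ has $3\cdot 2^{T-1}$ columns, the cone $C^T$ is cut out by only $24$ facets. Recall that normality of $\N A^T$ means $\N A^T = \Z A^T \cap C^T$, and that the inclusion $\N A^T \subseteq \Z A^T \cap C^T$ is automatic, so the real task is the reverse inclusion. By Lemma~\ref{lem:integinterior} the only lattice points of $\cP^T$ are the columns of $A^T$, and by Proposition~\ref{prop:conehype} the semigroup is graded with $n$-th graded piece $n\cP^T \cap \Z^6$, the grading being the common coordinate sum (equal to $n(T{-}1)$). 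Consequently normality is equivalent to the statement that the Hilbert basis of $C^T \cap \Z A^T$ consists entirely of degree-one elements, i.e.\ every Hilbert basis element is a single column $\ve a^T_{\ve w}$; indeed, if this holds then the Hilbert basis lies in $\N A^T$ and the normalization coincides with $\N A^T$. This is the property I would verify by computer for each $T$ in the range.

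First I would present the cone and the lattice in a form that avoids enumerating the exponentially many columns. For $T\ge 5$ the cone $C^T$ is defined by the $24$ explicit facet inequalities of Theorem~\ref{thm:facets} (Table~\ref{tab:homogeneous}), selected according to $T \bmod 6$; feeding these inequalities, rather than the generators, into a Hilbert-basis routine such as {\tt Normaliz} keeps the input size polynomial in $T$. The second ingredient is the lattice $\Z A^T$ itself. Since every column has coordinate sum $T{-}1$, one has $\Z A^T \subseteq \{\ve x\in\Z^6 \mid (T{-}1)\mid \sum_{i\neq j} x_{ij}\}$, and I would establish equality by checking that the differences of columns generate the full rank-$5$ lattice $\Z^6\cap\{\ve x \mid \sum x_{ij}=0\}$; this is a one-time Smith-normal-form computation on a handful of explicit words and is independent of $T$ up to the divisibility constant. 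Once $\Z A^T$ is presented as this single congruence, {\tt Normaliz} computes the Hilbert basis of $C^T\cap\Z A^T$ intrinsically, and I would verify that every Hilbert basis element has coordinate sum exactly $T{-}1$.

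The exceptional values $T=3,4$, where $\cP^T$ has only $12$ facets and where the hypothesis $T\ge 4$ of Lemma~\ref{lem:integinterior} and of Proposition~\ref{prop:conehype} must be handled with care, I would dispatch by direct enumeration of the (at most $12$) columns and a brute-force normality check, since there $3\cdot 2^{T-1}\le 24$. Running the facet-based computation for each residue $r=T\bmod 6$ across all $3\le T\le 135$ then settles the lemma. Where both the facet-based computation and the naive generator-based computation are feasible (small $T$, where the columns can still be listed), I would run both and confirm that they agree, as a safeguard against input errors.

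The main obstacle I anticipate is not the size of the computation but correctly identifying the lattice $\Z A^T$: the cone is fully controlled by the clean $24$-facet description, yet an error in the lattice --- for instance, overlooking the possibility that the column differences fail to saturate $\Z^6\cap\{\sum x_{ij}=0\}$ --- would silently yield the wrong Hilbert basis and hence a false normality verdict. I would therefore treat the determination and saturation of $\Z A^T$ as the crux and verify it independently of the normality check, using Proposition~\ref{prop:bnddeg} as a sanity constraint on the admissible lattice points. Finally, I would confirm that the bound $135$ is chosen large enough to seed every residue class modulo $6$ with sufficiently many base cases for the inductive step that follows, so that the computational verification here feeds cleanly into the general argument for all $T\ge 3$.
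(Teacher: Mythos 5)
Your proposal is correct, and like the paper it ultimately rests on a finite {\tt Normaliz} computation; the difference is in how the input is presented and what is checked. The paper's proof is a one-line primal computation: for each $3\le T\le 135$ it generates the design matrix $A^T$ (in practice its distinct column vectors, of which there are only polynomially many in $T$ even though there are $3\cdot 2^{T-1}$ words) and asks {\tt Normaliz} directly whether the semigroup generated by the columns is normal. This has essentially no logical dependencies on the rest of the paper. Your route is dual: you feed in the $24$ facet inequalities together with the lattice $\Z A^T$, and check that the Hilbert basis of $\Z A^T\cap C^T$ lives entirely in degree one. Your justification of the equivalence is sound --- degree-one elements of $\Z A^T\cap C^T$ lie in $\cP^T\cap\Z^6$ by Proposition~\ref{prop:conehype}, hence are columns by Lemma~\ref{lem:integinterior} for $T\ge 4$, and you correctly quarantine $T=3,4$ for brute force --- and you correctly flag the identification of $\Z A^T$ with the congruence lattice $\{\ve x\in\Z^6 \mid (T{-}1)\,|\sum_{i\neq j}x_{ij}\}$ as the step that must be verified rather than assumed (the inclusion into that lattice is automatic; fullness needs a per-$T$ Smith-normal-form check on column differences, not literally a ``one-time'' computation as you suggest). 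What your approach buys is scalability: $24$ inequalities versus an enumeration of all lattice points of $\cP^T$. What it costs is extra correctness dependencies: it presupposes Theorem~\ref{thm:facets}, which is itself established only for $T\ge 5$ and partly by computation (Proposition~\ref{prop:explicit-rays}); this is not circular, since Section~\ref{facets} nowhere uses normality, but it makes your verification conditional on more of the paper than the authors' own proof, which stands alone. Your cross-check of the two representations on small $T$ is a sensible safeguard that the paper does not mention.
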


\begin{proof}
The normality of the design matrices $A^{T}$ for $3 \leq T \leq 135$ was
confirmed computationally using the software {\tt Normaliz} \cite{normaliz}.
The software to generate the design matrices and the scripts to run the
computations are available at \url{https://github.com/dchaws/GenWordsTrans}.
\end{proof}


Using Lemma \ref{normal1} as a base, we prove normality in the general case by induction.
\begin{thm}\label{thm:normality}
The semigroup $\N A^{T}$ is normal for any $T \ge 3$.
\end{thm}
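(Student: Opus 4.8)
The plan is to prove normality by induction on $T$, using Lemma~\ref{normal1} as the base case (covering $3 \le T \le 135$) and establishing an inductive step that reduces the case of large $T$ to a smaller $T$, specifically to $T - 6$. The reason to decrease by $6$ is that Proposition~\ref{prop:explicit-rays} shows the cone $C$ is generated by loops of length $2$ and $3$, and the least common multiple $6$ is precisely the increment under which these loops can be added while preserving the congruence class $T \bmod 6$ on which the facet description in Table~\ref{tab:inhomogeneous} depends. So the goal is: assuming $\N A^{T-6}$ is normal, deduce that $\N A^T$ is normal for $T \ge 136$.

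To carry this out, I would take an arbitrary lattice point $\ve x \in \Z A^T \cap C^T$ and show $\ve x \in \N A^T$. By Proposition~\ref{prop:bnddeg}, such an $\ve x$ satisfies $\sum_{i\neq j} x_{ij} = n(T-1)$ for some $n \in \N$, so $\ve x$ lies in the $n$-th dilate $n\cP^T = C^T \cap H_{n(T-1)}$ by Proposition~\ref{prop:conehype}. The heart of the argument is a ``loop-stripping'' step: I claim that the state graph $G(\ve x)$ must contain at least one of the five loops $121$, $131$, $232$, $1231$, $1321$ from Proposition~\ref{prop:explicit-rays} as a subgraph, provided $T$ is large enough. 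Granting this, let $\ve e$ be (the transition-count vector of) such a loop, and set $\ve x' = \ve x - m\ve e$ where $m$ is chosen (either $3$ copies of a two-loop or $2$ copies of a three-loop, matching Lemma~\ref{lem:rays}) so that $\sum_{i\neq j} x'_{ij} = n(T-7)$, i.e.\ $\ve x'$ drops to the level corresponding to time $T-6$. I would then verify that $\ve x'$ still satisfies all the facet inequalities of Table~\ref{tab:inhomogeneous}, hence $\ve x' \in C^{T-6}$, and that $\ve x' \in \Z A^{T-6}$; by the inductive hypothesis $\ve x' \in \N A^{T-6}$, so $\ve x'$ decomposes as a non-negative integer combination of columns (words) of length $T-6$. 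Splicing the stripped loops back into one of these words then realizes $\ve x$ as a sum of words of length $T$, giving $\ve x \in \N A^T$.

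The main obstacle, and the step requiring genuine care, is establishing that $G(\ve x)$ always contains one of the five prescribed loops once $T$ exceeds the computational bound, together with verifying that after subtracting the loop the resulting $\ve x'$ remains inside the cone $C^{T-6}$ (non-negativity of the facet inequalities is delicate precisely because subtracting $\ve e$ can decrease some coordinate and the inhomogeneous right-hand sides $na$ also change). For the first part I would argue combinatorially: by Proposition~\ref{prop:bnddeg} the in- and out-degrees at each vertex are balanced to within $n$, and when $n(T-1)$ is large the multigraph $G(\ve x)$ has so many edges relative to its three vertices that an Eulerian-type/pigeonhole argument (in the spirit of Proposition~\ref{prop:eulpath}) forces a repeated short closed walk, i.e.\ one of the five loops. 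For the second part, since each loop $\ve e$ is itself a ray of $C$ (Proposition~\ref{prop:explicit-rays}), subtracting a suitable multiple keeps the point in the cone provided the dilation level is handled consistently; here I would lean on the fact that the $24$ facet inequalities are the \emph{only} constraints (Theorem~\ref{thm:facets}) and check membership facet-by-facet, using that $\ve e$ contributes non-negatively to each homogeneous form $\ve c \cdot \ve x$. The bookkeeping of matching the correct loop to the correct residue class and confirming the descent lands exactly at the $T-6$ level is where the proof must be most careful.
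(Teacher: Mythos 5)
Your skeleton matches the paper's: induction with step $6$, the computational base case of Lemma~\ref{normal1}, and the five loops of Proposition~\ref{prop:explicit-rays} as the objects to strip off and splice back. But the two steps you yourself flag as ``the main obstacle'' are exactly where the sketch breaks, and the paper works precisely because it replaces them with a different device. First, finding \emph{one} copy of a loop in $G(\ve x)$ is not enough: to descend from level $n(T{-}1)$ to level $n(T{-}7)$ you must subtract $3n$ copies of the same two-loop (or $2n$ copies of the same three-loop), not $3$ or $2$ as you wrote; your pigeonhole/Eulerian argument is not developed, and as stated it would only ever produce a single loop. Second, your justification that the stripped point stays in the cone is backwards: since $\ve e$ is a ray, $\ve c \cdot \ve e \ge 0$ for every facet normal $\ve c$, so subtracting $m\ve e$ \emph{decreases} every value $\ve c\cdot\ve x$ and can push it below the required bound; cone membership after subtraction is exactly what needs proof, and ``$\ve e$ contributes non-negatively'' is an argument against you, not for you.

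The paper resolves both difficulties at once with the Minkowski decomposition $Q^r = P^r + C$ computed in the Appendix: any $\ve x$ in ${\rm cone}(A^{T})$ at level $n(T{-}1)$ is written as $\ve x = \ve b + \sum_{i=1}^5 \alpha_i \ve e_i$ with $\ve b \in nP^r$ and $\alpha_i \ge 0$. Since every vertex of every $P^r$ has $L_1$-norm at most $17$, one gets $1_6\cdot\ve b \le 17n$, hence $n(T{-}1) \le 17n + 2(\alpha_1{+}\alpha_2{+}\alpha_3) + 3(\alpha_4{+}\alpha_5)$, so for $T > 48$ some $\alpha_i$ exceeds $3n$ (two-loop) or $2n$ (three-loop). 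Subtracting $3n\ve e_i$ (resp.\ $2n\ve e_i$) then keeps that coefficient positive, so the stripped point automatically remains in $Q^r_n$, i.e.\ in ${\rm cone}(A^{T-6})$ at level $n(T{-}7)$, and the inductive hypothesis applies --- no facet-by-facet check is ever needed. Finally, the splicing step also requires care you omit: the $3n$ loop copies must be distributed three per path among the $n$ paths of length $T{-}6$ (appending all of them to one path gives the wrong length), and attaching $ijijij$ to a path $\ve w$ needs the endpoint argument (if the endpoints of $\ve w$ differ, one of them lies in $\{i,j\}$ because there are only three states; if $\ve w$ is a cycle, rotate it first), the three-loop case being easy since $ijk$ meets every state. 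Without the Minkowski/vertex-norm device, or a rigorous substitute supplying both the quantity $3n$ (resp.\ $2n$) and cone membership after stripping, your induction does not go through.
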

\begin{proof}
We need to show
that given any transition counts $x_{12},\dots,x_{32}$, such that
their sum is divisible by $T{-}1$ and  the counts lie in ${\rm cone}({A^{T}})$,
there exists a set of paths
having these transition counts.
Write $\ve x=[x_{12},x_{13},x_{21},x_{23},x_{31},x_{32}]^T$ and 
$1_6=[1,1,1,1,1,1]$. Let
\[
n=1_6 \cdot \ve x/(T{-}1)
\]
denote the number of the paths.  
Note that $n$ is determined from $T$ and $\ve x$.

We listed above inhomogeneous forms of inequalities
defining facets.  
In all cases $T=6k{+}r$ for $r=0,1,\dots,5$, the inhomogeneous inequalities 
for $n$ paths  can be put in the form
\begin{equation}
\label{eq:inhom}
c_{12}  x_{12} + \dots + c_{32} x_{32} \ge a(n{+}1),
\end{equation}
where $c_{12},\dots, c_{32}, a\in \N$ do not depend on $n$.
Since $C=C_r$ are common for $r=0,\dots,5$, 
the expression~\eqref{eq:Minkowski-sum} is written as
\[
Q^r = P^r + C.
\]
The $n$-th dilation of $Q^r$ is 
\[
Q_n^r:= n Q^r = n P^r + nC = n P^r + C.
\]
Then from \eqref{eq:inhom} we have
\[
{\rm cone}({A^{T}}) \cap \{ \ve x \mid 1_6 \cdot \ve x = n(T{-}1)\} = 
Q_n^r \cap \{ \ve x \mid 1_6 \cdot \ve x = n(T{-}1)\}.
\]

We now look at vertices of $P^r$ from Appendix.
The vertex [0,3,4,3,0,7] for $Q^2$ has the largest $L_1$-norm, which
is 17. Hence the sum of elements
of these vertices $P^r_n$ are at most $17n$.


\medskip
For $T=6k{+}r$, any 
non-negative integer vector $\ve x\in {\rm cone}({A^{T}})$ such that $1_6 \cdot \ve x=n(T{-}1)$, can be written
as
\[
\ve x= \ve b + \alpha_1 \ve e_1 + \alpha_2 \ve e_2 + \alpha_3 \ve e_3 + \alpha_4 \ve e_4 + \alpha_5 \ve e_5, \quad \ve b\in nP^r_1, \  \alpha_i \ge 0, \ i=1,\dots,5.
\]
Taking the inner product with $1_6$ (i.e.\ the $L_1$-norm) we have 
\begin{equation}
\label{eq:6n}
1_6 \cdot  \ve b \le 17n.
\end{equation}
Hence
\[
n(T{-}1)=1_6 \cdot \ve  x \le 17n + 2 (\alpha_1 + \alpha_2 + \alpha_3) + 3 (\alpha_4 + \alpha_5).
\]

Consider the case that
\begin{equation}
\label{eq:alphas}
\alpha_1, \alpha_2, \alpha_3 \le 3n, \quad \alpha_4, \alpha_5 \le 2n.
\end{equation}
Then
\[
n(T{-}1)=1_6 \cdot \ve x  \le 17n + 18n + 12n = 47n
\]
or $T\le 48$.  Hence if $T > 48$ 
we have at least one of
\begin{equation}
\label{eq:alphas1}
\alpha_1> 3n,\  \alpha_2 > 3n,\  \alpha_3> 3n, \ \ \alpha_4 >2n,\ \alpha_5 > 2n.
\end{equation}

\medskip
Now we employ induction on $k$ for  $T=6k{+}r$.  Note that
we arbitrarily fix $n \ge 1$ and use induction on $k$.
For $k\le 21$ we have $T=6k{+}r \le 126 + r \le 131 < 135$ 
and the normality holds by
the computational results.

Now consider $k > 22$ and let $T=6k{+}r$.
In this case at least one inequality of \eqref{eq:alphas1} holds.
Let 
\[
\ve x\in  {\rm cone}({A^{T}}) \cap \{ x \mid 1_6 \cdot \ve x = n(T{-}1)\} 
\]
First consider $\ve x$ such that 
$\alpha_1 > 3n$.  (The argument for $\alpha_2$ and $\alpha_3$ is the same.)
Let
\[
\tilde {\ve x} = \ve x - 3n \ve e_1 \in  {\rm cone}({A^{T}}) \cap \{ \ve x
\mid 1_6 \cdot \ve  x = n(T{-}1{-}6)\} 
\]
Our inductive assumption is that there exists a set of paths $\ve w_1, \dots, \ve w_n$  of length $T{-}6$
having $\tilde x$ as the transition counts.
We now form $n$ partial paths of length $6$:
\[
n \ \text{times}\ ijijij
\]
Note that instead of $ijijij$ we can also use $jijiji$.
We now argue that these $n$ partial paths can be appended (at the end or at the beginning)
of each path $\ve w_1, \dots, \ve w_n$.  

Let $\ve w_1=s_1 \dots s_{T-6}$.  If $s_1 \neq s_{T-6}$, then
\[
\{s_1, s_{T-6}\}\cap \{i,j\} \neq \emptyset,
\]
since $|S|=3$.  In this case we see that at least one of the following 4 operations
is possible
\begin{enumerate}
\setlength{\itemsep}{0pt}
\item put $ijijij$ at the end of $\ve w_1$
\item put $jijiji$ at the end of $\ve w_1$
\item put $ijijij$ in front of $\ve w_1$
\item put $jijiji$ in front of $\ve w_1$
\end{enumerate}
Hence $\ve w_1$ can be extended to a path of length $T$.
Now consider the case that  $s_1= s_{T-6}$, i.e., $\ve w_1$ is a cycle.  It may happen
that $s_1 \neq i, j$.  But a cycle can be rotated, i.e., instead of $\ve w_1=s_1 \dots s_{T-6}$ we can take
\[
\ve w_1'=s_2 s_3 \dots s_{T-6} s_1
\]
where $s_2 \neq s_1$, hence $s_2=i \ \text{or}\ j$.  Then either
$ijijij$ or $jijiji$ can be put in front of $\ve w_1'$ and $\ve w_1'$ can be extended.  Therefore
we see that $\ve w_1$ can be extended in any case.
Similarly $\ve w_2,\dots,\ve w_n$ can be extended.

The case of $\alpha_4 >2n$ is trivial.  The path $ijkijk$ can be rotated as
$jkijki$ or $kijkij$.  Therefore one of them can be appended to each of $\ve w_1, \dots, \ve w_n$.
\end{proof}

\section{Discussion}\label{discussion}

In this paper, we considered only the situation of the toric
homogeneous Markov chain (THMC) model \eqref{thmc} for $S = 3$, with the extra assumption of having non-zero transition probabilities only when the transition is between two different states.
In this setting, we described the hyperplane representations of the
design polytope for any $T \geq 3$, 
and from this representation we showed that the semigroup generated by the columns
of the design matrix $A^{T}$ is normal.  

We recall from Lemma 4.14 in \cite{Sturmfels1996}, that a given set of integer vectors $\{\ve a_1,\ldots, \ve a_m\}$ is a graded set if there exists
$\ve w \in \Q^{ S^2}$ such that ${\bf a}_i \cdot \ve w = 1$. In our setting, the set of columns of the design matrix $A^{T}$ is a graded set, as each of its columns add up to $T{-}1$, so we let $\ve w = (\frac{1}{T{-}1},\ldots,\frac{1}{T{-}1})$.

In his book\cite{Sturmfels1996}, Sturmfels provided a way to bound the generators of the toric ideal associated to an integer matrix $A$. The precise statement is the following.
\begin{thm}[Theorem 13.14 in\cite{Sturmfels1996}]\label{normal_thm}
Let $A \subset \Z^d$ be a graded set such that the semigroup generated
by the elements in $A$ is normal.  Then the toric ideal $I_A$ associate with
the set $A$ is generated by homogeneous binomials of degree at most $d$.
\end{thm}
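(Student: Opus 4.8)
The plan is to reduce the statement to a connectivity property of the fibers of $A$, and then to establish that property using Carath\'eodory's theorem together with the normality hypothesis. First I would fix the grading: since $A$ is a graded set, choose $\ve w$ with $\ve w\cdot \ve a_i=1$ for all $i$. Then any binomial $x^{\ve u}-x^{\ve v}\in I_A$ satisfies $A\ve u=A\ve v$, which forces $|\ve u|=\ve w\cdot A\ve u=\ve w\cdot A\ve v=|\ve v|$; hence $I_A$ is homogeneous for the standard grading $\deg x_i=1$, and it suffices to bound the degrees of a minimal homogeneous generating set. Writing $S=\K[x_1,\dots,x_m]$, a homogeneous ideal is generated in degrees at most $D$ if and only if $(I_A)_n=S_1\cdot (I_A)_{n-1}$ for every $n>D$, so the goal becomes to show $(I_A)_n=S_1\cdot(I_A)_{n-1}$ whenever $n>d$.

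Next I would translate this into combinatorics of the fibers $\mathcal F_{\ve b}=\{\ve u\in\N^m\mid A\ve u=\ve b\}$. The space $(I_A)_n$ is spanned by the binomials $x^{\ve u}-x^{\ve v}$ with $\ve u,\ve v$ in a common fiber $\mathcal F_{\ve b}$ of degree $n$, while $S_1\cdot(I_A)_{n-1}$ is spanned by precisely those binomials whose two monomials share a common variable (factor out that variable to land in $(I_A)_{n-1}$). Consequently $(I_A)_n=S_1\cdot(I_A)_{n-1}$ is equivalent to the assertion that, for every $\ve b$ of degree $n$, the graph $G_{\ve b}$ on the vertex set $\mathcal F_{\ve b}$ whose edges join pairs $\ve u,\ve v$ with $\supp(\ve u)\cap\supp(\ve v)\neq\emptyset$ is connected. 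Thus the whole theorem reduces to showing that, for $n>d$, every $G_{\ve b}$ is connected.

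To prove connectivity I would argue by contradiction: suppose $\mathcal F_{\ve b}=U_1\sqcup U_2$ splits into two nonempty sets with no edge between them, so that every $\ve u\in U_1$ has support disjoint from every $\ve v\in U_2$. Collecting the supports into disjoint index sets $V_1,V_2$, the point $\ve b/n$ then lies in $\conv(\ve a_i\mid i\in V_1)\cap\conv(\ve a_i\mid i\in V_2)$. Since $\cone(A)$ has dimension $\rank(A)\le d$, Carath\'eodory's theorem \cite{Schrijver1986Theory-of-linea} writes $\ve b$ as a non-negative real combination of at most $d$ of the generators. The normality hypothesis enters here: it supplies the integer decomposition property, namely that every lattice point of the dilate $nP$ (with $P=\conv(A)$) is an integral sum of $n$ of the generators $\ve a_i$. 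Using this to realize the Carath\'eodory decomposition by genuine generators and exploiting $n>d$, at least one representative in $\mathcal F_{\ve b}$ must reuse a generator whose index can be inserted into members of both $U_1$ and $U_2$, contradicting the absence of an edge. This yields connectivity of $G_{\ve b}$ for all $n>d$ and, by the reduction above, bounds the minimal generators of $I_A$ by degree $d$.

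The main obstacle is exactly this last reconciliation. Carath\'eodory's theorem only produces a \emph{real} conic combination of at most $d$ generators, whereas the connectivity argument needs \emph{integral} decompositions into generators; and an ``integer Carath\'eodory'' bound of $d$ is known to fail for arbitrary configurations. Normality is the hypothesis that closes this gap, so the heart of the proof is to convert the real, bounded-support decomposition into an integral one through the integer decomposition property, while keeping the number of generators altered in a single move below $d+1$. Once that bookkeeping is carried out, the criterion $(I_A)_n=S_1\cdot(I_A)_{n-1}$ holds for every $n>d$, and the theorem follows.
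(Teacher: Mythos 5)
First, a point of order: the paper contains no proof of this statement. It is imported verbatim as Theorem~13.14 of \cite{Sturmfels1996} and used as a black box, together with Theorem~\ref{thm:normality}, to deduce Theorem~\ref{thm:bound}; so your sketch can only be measured against the known proofs of Sturmfels' theorem. Your two reductions are correct and standard: gradedness makes $I_A$ homogeneous and reduces the claim to $(I_A)_n = S_1\cdot (I_A)_{n-1}$ for all $n>d$, and this is indeed equivalent to connectivity, for every $\ve b$ of degree $n>d$, of the graph $G_{\ve b}$ on the fiber $\F_{\ve b}=\{\ve u\in\N^m \mid A\ve u=\ve b\}$ whose edges are pairs with intersecting support.

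The genuine gap is the third step, and it is not ``bookkeeping'' --- it is the entire theorem. In your contradiction setup $\F_{\ve b}=U_1\sqcup U_2$ with disjoint support sets $V_1,V_2$, Carath\'eodory plus gradedness produce an index $i_1$ with real coefficient $\lambda_{i_1}>1$, and normality then gives $\ve b-\ve a_{i_1}\in\N A$, hence an element $\ve z=\ve w+\ve e_{i_1}\in\F_{\ve b}$ divisible by $x_{i_1}$. But this construction can never yield the contradiction you describe. The element $\ve z$ lies in some component, say $U_1$, so $i_1\in V_1$; and then \emph{every} element of $\F_{\ve b}$ divisible by $x_{i_1}$ shares the index $i_1$ with a member of $U_1$, hence lies in $U_1$ with support inside $V_1$. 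In other words, the whole clique of fiber elements that ``reuse'' the generator $\ve a_{i_1}$ sits inside a single component: inserting a reused generator cannot manufacture an element whose support meets both $V_1$ and $V_2$, which is what a contradiction requires. Your sentence ``at least one representative in $\mathcal F_{\ve b}$ must reuse a generator whose index can be inserted into members of both $U_1$ and $U_2$'' is therefore exactly the unproved assertion, and the route you indicate for it fails. Closing this gap needs genuinely different input; for instance, the classical argument combines Hochster's theorem (normal affine semigroup rings are Cohen--Macaulay) with the Danilov--Stanley description of the canonical module to bound the Castelnuovo--Mumford regularity of the semigroup ring by $d-1$, which is what forces the minimal generators of $I_A$ into degrees at most $d$. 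As it stands, your proposal establishes the (correct, standard) reformulation of the problem but not the statement itself.
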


In particular, the normality of the semigroup  generated by the columns of the design matrix $A^{T}$ is demonstrated in Theorem~\ref{thm:normality}; therefore, we obtain the following theorem as a consequence of Theorem~\ref{normal_thm}.

\begin{thm}\label{thm:bound}
For $S=3$ and for any $T\geq 3$, a Markov basis for the toric ideal $I_{A^T}$ associated to the THMC model (without loops and initial parameters) consists of binomials of degree at most $6$.
\end{thm}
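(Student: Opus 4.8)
The plan is to obtain the statement as a direct corollary of Sturmfels' degree bound, Theorem~\ref{normal_thm}, since both of its hypotheses have already been verified for the design matrix $A^T$ earlier in the paper. First I would confirm that the ambient dimension is $d = 6$: the rows of $A^T$ are indexed by the six ordered pairs of distinct states in $\{1,2,3\}$, so $A^T \subset \Z^6$. Second, I would invoke the gradedness already noted in the discussion preceding the theorem, namely that every column of $A^T$ records the $T{-}1$ transitions of a word and hence sums to $T{-}1$, so that the vector $(\tfrac{1}{T-1},\dots,\tfrac{1}{T-1}) \in \Q^6$ pairs to $1$ with each column $\ve a_{\ve w}^T$, $\ve w \in \wordsnlt$. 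This exhibits the columns of $A^T$ as a graded set in the sense required by Theorem~\ref{normal_thm}.

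The remaining hypothesis of Theorem~\ref{normal_thm} is the normality of the semigroup $\N A^T$, which is exactly the content of Theorem~\ref{thm:normality} and holds for every $T \ge 3$. With gradedness and normality both in hand, Theorem~\ref{normal_thm} applies verbatim and yields that the toric ideal $I_{A^T}$ is generated by homogeneous binomials of degree at most $d = 6$.

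To conclude, I would translate this algebraic statement back into the language of Markov bases. As recalled in Section~\ref{MB}, the moves $\ve z \in \Z^m$ with $A^T \ve z = 0$ are in bijection with the binomials lying in the kernel $I_{A^T}$ of $\psi$, and a set of moves forms a Markov basis precisely when the corresponding binomials generate $I_{A^T}$. Consequently the degree bound on a generating set of $I_{A^T}$ transfers directly to a degree bound on a Markov basis, giving binomials of degree at most $6$ and completing the proof.

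I do not expect any genuine obstacle at this stage: the entire difficulty has been concentrated in establishing normality (Theorem~\ref{thm:normality}), whose inductive argument in turn rests on the explicit facet description of $P^T$ from Section~\ref{facets}. The present theorem is a purely formal consequence, and the only points demanding attention are the routine bookkeeping checks that $d = 6$ and that the proposed grading vector is admissible.
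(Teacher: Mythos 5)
Your proposal is correct and follows essentially the same route as the paper: the paper also deduces Theorem~\ref{thm:bound} directly from Sturmfels' bound (Theorem~\ref{normal_thm}), using the grading vector $(\tfrac{1}{T-1},\dots,\tfrac{1}{T-1})$ to verify gradedness and Theorem~\ref{thm:normality} for normality, with $d=6$. The paper treats the translation from generators of $I_{A^T}$ to Markov bases as implicit, whereas you spell it out, but this is the only (cosmetic) difference.
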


The bound provided by Theorem \ref{thm:bound} seems not to be sharp, in the sense that there exists Markov basis whose elements have degree strictly less than $6$. In our computational experiments, we found evidence that more should be true.  

\begin{conj}\label{conj:MBbound}
Fix $S= 3$; then, for every $T\geq 3$, there is a Markov basis for the toric
ideal $I_{A^{T}}$ consisting of binomials of degree at most $2$, and there is
a Gr\"obner basis with respect to some term ordering consisting of binomials of
degree at most $3$.
\end{conj}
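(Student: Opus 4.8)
The plan is to recast the conjecture in the language of state graphs, where the degree of a binomial becomes transparent, and then to attack the connectivity of fibers by explicit quadratic moves. By Proposition~\ref{fiberstateequiv}, a binomial $P(\ve w_1)\cdots P(\ve w_\delta) - P(\ve w_1')\cdots P(\ve w_\delta')$ lies in $I_{A^T}$ exactly when the multisets $W=\{\ve w_1,\dots,\ve w_\delta\}$ and $\overline W=\{\ve w_1',\dots,\ve w_\delta'\}$ satisfy $G(W)=G(\overline W)$; since the columns of $A^T$ are graded, the degree $\delta$ equals the number of paths on each side. Thus a fiber $\F_{\ve b}$ is precisely the set of decompositions of one fixed directed multigraph $G$ (whose degrees obey the balance condition of Proposition~\ref{prop:bnddeg}) into $\delta$ no-self-loop paths, each with $T{-}1$ edges, whose edges partition the edge multiset of $G$. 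The degree-$2$ half of the conjecture then asks that any two such decompositions be joined by a sequence of \emph{pairwise recombinations}: moves replacing two paths $\ve w_i,\ve w_j$ by two paths carrying the same combined edges and the same two edge-counts $T{-}1$.

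First I would isolate the admissible quadratic moves. The natural one is a \emph{tail swap} at a vertex $v$ visited by both paths, exchanging the portions after $v$; the obstruction is that this preserves the total edge count but splits it unevenly unless $v$ occupies the same position in both paths, so one must either restrict to such synchronized swaps or combine a tail swap with the reversal of a shared closed subwalk to restore both lengths. I would enumerate these length-preserving degree-$2$ moves, check directly that each lies in $I_{A^T}$, and then prove a connectivity theorem: every decomposition of $G$ can be driven to a fixed canonical one by such moves, arguing by induction on the number of edges in which two decompositions disagree.

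To make this uniform in $T$ I would reuse the period-$6$ lifting of the normality proof of Theorem~\ref{thm:normality}. By Proposition~\ref{prop:explicit-rays} the cone rays are the five loops $121,131,232,1231,1321$, each contributing six edges after the appropriate number of repetitions, so any long word whose state graph contains such a loop descends to a word with six fewer edges, as in Lemma~\ref{lem:rays}. I would therefore verify the conjecture computationally for the finitely many base cases $3\le T\le T_0$ (paralleling the base $T\le 135$ of Lemma~\ref{normal1}) and then lift, showing that a quadratic move on a long decomposition pulls back to a quadratic move on its length-$(T{-}6)$ reduction, the inserted loops being carried along unchanged. The degree-$3$ Gr\"obner claim I would treat last: fixing an explicit term order adapted to the state-graph grading, I would show that every $S$-polynomial of two quadratic generators reduces to zero through binomials of degree at most $3$, the surviving cubics being exactly the irreducible obstructions to a quadratic Gr\"obner basis.

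The hard part will be the length-preservation constraint in the connectivity argument. Because each path must keep exactly $T{-}1$ edges, the elementary exchanges that connect arbitrary Eulerian decompositions of a multigraph do not apply directly, and one must show that the far more rigid synchronized swaps still suffice to connect the whole fiber for every $\delta$ and every residue $T\bmod 6$, not merely for small $\delta$. Controlling the resulting $S$-polynomial reductions tightly enough to yield the sharp degree-$3$ Gr\"obner bound — rather than a weaker bound coming from Theorem~\ref{normal_thm} — is the remaining delicate point.
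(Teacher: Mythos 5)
The statement you are trying to prove is Conjecture~\ref{conj:MBbound} in the paper: the authors do \emph{not} prove it. They only establish the weaker degree-$6$ bound (Theorem~\ref{thm:bound}), which follows from normality of $\N A^T$ (Theorem~\ref{thm:normality}) together with Sturmfels' Theorem~\ref{normal_thm}; the degree-$2$/degree-$3$ statement is supported in the paper only by computational evidence. So there is no paper proof to compare against, and your text should be judged as a proposed proof of an open problem. As such it is a research program, not a proof: every load-bearing step is announced rather than carried out. The central claim --- that length-preserving ``synchronized tail swaps'' (possibly combined with reversal of shared closed subwalks) connect every fiber --- is exactly the content of the degree-$2$ conjecture, and you give no argument for it beyond saying you would prove it by induction on the number of edges of disagreement. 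You yourself flag this as ``the hard part,'' which is correct, but that means the proposal contains no proof of the statement.

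There is also a concrete flaw in the proposed lifting step. Lemma~\ref{lem:rays} and the induction in Theorem~\ref{thm:normality} operate on \emph{lattice points of the cone}: if a sufficient-statistics vector has a large coefficient on a loop ray, one subtracts $3n$ or $2n$ copies of that ray, invokes the inductive existence of \emph{some} set of shorter paths, and then re-attaches loops, possibly after rotating a path that happens to be a cycle. This proves existence of one decomposition, which is what normality needs. Markov-basis connectivity is a statement about \emph{all} decompositions in a fiber and the moves between them; a given multiset of words with those transition counts need not contain $ijijij$ (or any rotation of a loop) as a removable subword, so the map ``long decomposition $\mapsto$ length-$(T{-}6)$ reduction'' is not defined on fiber elements, and the claim that a quadratic move ``pulls back'' has no meaning without it. Distinct elements of the same fiber for $A^T$ would in any case reduce to elements of \emph{different} fibers of $A^{T-6}$ unless the reduction is canonical, which it is not. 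Repairing this would require a genuinely new argument (for instance, showing every word in the fiber can first be driven by quadratic moves to one containing a standard loop --- but that presupposes the very connectivity you are trying to establish). The degree-$3$ Gr\"obner half has the same status: no term order is specified and no reduction argument is given, and an $S$-polynomial computation uniform in $T$ would again need a uniform description of the quadratic generators that you have not produced.
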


In general,  we do not know the degree of a
Markov basis for the toric 
ideal $I_{A^{T}}$ nor the smallest degree of a Gr\"obner
basis for fixed $S\geq 4$.  However, for $S = 4$  we  
found that the degree of a Markov basis (using {\tt 4ti2}~\cite{4ti2}) is $4$ for $T = 3, \, 4$ and the degree is $3$ for $T = 5$.  Unfortunately, {\tt 4ti2}  was not able to compute a Markov basis
for $T \geq 6$.
We also noted that for $S \geq 4$, the semigroup generated by the columns
 of $A^{T}$ is not normal.  For example, for $S = 4$ and  $T = 8$, the 
 linear combination $\frac{1}{2} {\bf a}^{4,
   4}_{12121212} + \frac{1}{2} {\bf a}^{4,
   4}_{34343434}$ is an integral solution in the intersection between the
cone and the integer lattice.  However, this does not form a path.  
Thus, for any $S \geq 4$ and any $T \geq 5$, it is interesting to investigate 
the necessary and sufficient conditions that impose normality for the semigroup 
 generated by the columns of the
design matrix $A^{T}$.

\section{Appendix}
The six cones used in defining $Q^r$ for $r=0,\ldots,5$:
\begin{eqnarray*}C^r &:=& \cone \big(\, [1,0,1,0,0,0], [1,0,0,1,1,0], [0,1,1,0,0,1], \\
&&\qquad [0,1,0,0,1,0], [0,0,0,1,0,1] \,\big)
\end{eqnarray*}
for $r=0,\ldots,5$.

The six polytopes used in defining $Q^r$ for $r=0,\ldots,5$ are given below,
where the vertices are modulo the permutations of $S = \{1,2,3\}$. That is, the
indexing below is $x_{12}, x_{21}, x_{13}, x_{31}, x_{23}$, and $x_{32}$. To get
the full list of vertices one should use all six permutations of $\{1,2,3\}$
and permute the indices of each vertex below accordingly.

$\vertices( Q^0) := \Big[$ 
[0,    1,    1,    0,    0,    1],
[0,    1,    2,  1/2,    1,  3/2],
[0,    1,  3/2,    1,  1/2,    2],
[0,    2,    2,    0,    1,    2],
[0,    2,    2,    0,    2,    5],
[0,    2,    2,  2/3,    0,  7/3],
[0,    2,    3,    0,    2,    4],
[0,    2,    4,    2,    0,    3],
[0,    2,  3/2,    0,    0,  3/2],
[0,    2,  7/3,    0,  2/3,    2],
[0,    3,    4,    0,    0,    4],
[0,  6/5,  8/5,  4/5,  2/5, 11/5],
[0,  6/5, 11/5,  2/5,  4/5,  8/5],
[2/3,  4/3,  4/3,  2/3,  2/3,  7/3]$\Big]$.

$\vertices( Q^1) := \Big[$ 
[0,    0,    0,    0,    0,    0],
[0,    0,    0,    1,    3,    2],
[0,    0,    1,    0,    2,    3],
[0,    1,    1,    0,    1,    3],
[0,    1,    1,    0,    2,    2],
[0,    1,    2,    0,    1,    2],
[0,    1,    2,    1,    0,    2],
[0,    1,  1/2,  1/2,  1/2,  1/2],
[0,  1/2,    0,  1/2,    1,    1],
[0,  1/2,    1,    1,  1/2,    0],
[0,  1/2,  1/2,    1,  1/2,  1/2],
[0,  1/2,  1/2,  1/2,    1,  1/2] $\Big]$.

$\vertices( Q^2) := \Big[$ 
[0,    1,    1,    0,    1,    2],
[0,    1,    2,  1/2,    2,  5/2],
[0,    1,    3,  3/2,    1,  3/2],
[0,    1,  3/2,    1,  3/2,    3],
[0,    1,  5/2,    2,  1/2,    2],
[0,    2,    2,    0,    2,    5],
[0,    2,    2,    0,    3,    4],
[0,    2,    2,    1,    2,    4],
[0,    2,    3,    0,    2,    4],
[0,    2,    4,    2,    0,    3],
[0,    2,    4,    2,    1,    2],
[0,    3,    4,    0,    3,    7],
[0,    3,    7,    3,    0,    4],
[1/3,  2/3,  2/3,  1/3,  1/3,  2/3],
[1/3,  2/3,  5/3,  5/6,  4/3,  7/6],
[1/3,  2/3,  7/6,  4/3,  5/6,  5/3],
[2/3,  4/3,  4/3,  2/3,  2/3,  7/3],
[2/3,  4/3,  4/3,  2/3,  5/3,  4/3] $\Big]$.

$\vertices( Q^3) := \Big[$ 
[0,    0,    0,    0,    0,    0],
[0,    0,    0,    1,    1,    0],
[0,    1,    1,    0,    2,    4],
[0,    1,    2,    0,    2,    3],
[0,    1,    3,    2,    0,    2] $\Big]$.

$\vertices( Q^4) := \Big[$ 
[0,    1,    1,    0,    0,    1],
[0,    1,    2,  1/2,    1,  3/2],
[0,    1,  3/2,    1,  1/2,    2],
[0,    2,    2,    0,    1,    4],
[0,    2,    2,    0,    2,    3],
[0,    2,    2,    1,    1,    3],
[0,    2,    3,    0,    1,    3],
[0,    2,    3,    1,    0,    3],
[0,    2,    3,    1,    1,    2],
[0,    3,    4,    0,    2,    6],
[0,    3,    6,    2,    0,    4],
[1/3,  5/3,  5/3,  1/3,  1/3,  8/3],
[1/3,  5/3,  5/3,  1/3,  4/3,  5/3] $\Big]$.

$\vertices( Q^5) := \Big[$ 
[0,    0,    0,    0,    1,    1],
[0,    0,    0,    1,    4,    3],
[0,    0,    1,    0,    3,    4],
[0,    1,    1,    0,    2,    4],
[0,    1,    1,    0,    3,    3],
[0,    1,    2,    0,    2,    3],
[0,    1,    3,    2,    0,    2],
[0,    1,  1/2,  1/2,  3/2,  3/2],
[0,    1,  3/2,  3/2,  1/2,  1/2],
[0,  1/2,    0,  1/2,    2,    2],
[0,  1/2,    2,    2,  1/2,    0],
[0,  1/2,  1/2,  1/2,    2,  3/2],
[0,  1/2,  3/2,    2,  1/2,  1/2],
[1,    2,  1/2,  1/2,  1/2,  1/2] $\Big]$.

\nocite{Hara:2010vn}
\nocite{Hara:2010uq}
\bibliographystyle{spmpsci}      
\bibliography{references}

\end{document}